\numberwithin{equation}{section}
\newtheorem{theorem}{Theorem}[section]
\newtheorem{proposition}[theorem]{Proposition}
\newtheorem{corollary}[theorem]{Corollary}
\newtheorem{lemma}[theorem]{Lemma}
\theoremstyle{definition}
\newtheorem{definition}[theorem]{Definition}
\newtheorem{remark}[theorem]{Remark}
\newcommand{\R}{\mathbb{R}}
\newcommand{\supp}{{\rm supp}{\hspace{.05cm}}}
\begin{document}

\title
 [Critical Gauged Schr\"{o}dinger equations in $\R^2$]
 {Critical gauged Schr\"{o}dinger equations \\
 in $\R^2$ with vanishing potentials}

\author[L.\ Shen]{Liejun Shen}
\author[M.\ Squassina]{Marco Squassina}
\author[M.\ Yang]{Minbo Yang}

\address{Liejun Shen, Minbo Yang, \newline\indent Department of Mathematics, Zhejiang Normal University, \newline\indent
	Jinhua, Zhejiang, 321004, People's Republic of China}
\email{liejunshen@sina.com, mbyang@zjnu.edu.cn.}

\address{Marco Squassina, \newline\indent
	Dipartimento di Matematica e Fisica \newline\indent
	Universit\`a Cattolica del Sacro Cuore, \newline\indent
	Via della Garzetta 48, 25133, Brescia, Italy}
\email{marco.squassina@unicatt.it}

\subjclass[2010]{35J20,~35Q55}
\keywords{Gauged Schr\"{o}dinger equation,
vanishing potentials, critical exponential growth, multiplicity.}

\thanks{Minbo Yang was partially supported by NSFC (11971436, 12011530199) and ZJNSF(LZ22A010001, LD19A010001). Marco  Squassina  is  member  of  the  Gruppo  Nazionale  per  l'Analisi  Matematica,  la Probabilita  e  le  loro  Applicazioni  (GNAMPA)  of  the  Istituto  Nazionale  di  Alta  Matematica  (INdAM)}

\begin{abstract}
We study a class of gauged nonlinear Schr\"{o}dinger equations in the plane
\[
\left\{ \begin{gathered}
  -\Delta u+V(|x|) u+\lambda\bigg(\int_{|x|}^\infty \frac{h_u(s)}{s}u^2(s)ds+\frac{h_u^2(|x|)}{|x|^2} \bigg)u=
  K(|x|)f(u)+\mu g(|x|)|u|^{q-2}u, \hfill \\
 u(x)=u(|x|) ~\text{in}~\R^2, \hfill \\
\end{gathered}  \right.
\]
where $
h_u(s)=\int_0^s\frac{r}{2}u^2(r)dr
$, $\lambda,\mu>0$ are constants,
$V(|x|)$ and $K(|x|)$ are continuous
functions vanishing at infinity. Assume that $f$ is of critical exponential growth and $g(x)=g(|x|)$ satisfies some technical assumptions with $1\leq q<2$, we obtain
 the existence of two nontrivial solutions via the Mountain-Pass theorem and Ekeland's variational principle.
  Moreover, with the help of
   the genus theory, we prove the existence of infinitely many solutions
if $f$ in addition is odd.
\end{abstract}

%\maketitle

%\begin{center}
%	\begin{minipage}{8.5cm}
%		\small
%		\tableofcontents
%	\end{minipage}
%\end{center}
%
%\medskip

\maketitle

\section{Introduction and main results}

\setcounter{equation}{0}

\subsection{General overview}
In this paper we consider the existence and multiplicity of nontrivial solutions for a
gauged nonlinear
 Schr\"{o}dinger equation with vanishing potentials and critical exponential growth
\begin{equation}\label{mainequation1}
\left\{ \begin{gathered}
  -\Delta u+V(|x|) u+\lambda\bigg(\int_{|x|}^\infty \frac{h_u(s)}{s}u^2(s)ds+\frac{h_u^2(|x|)}{|x|^2} \bigg)u=
  K(|x|)f(u)+\mu g(|x|)|u|^{q-2}u, \hfill \\
 u(x)=u(|x|)~\text{in}~\R^2, \hfill \\
\end{gathered}  \right.
\end{equation}
where $h_u(s)=\int_0^s\frac{r}{2}u^2(r)dr$, $\lambda,\mu>0$ are constants,
$V(|x|)$ and $K(|x|)$ are continuous
functions vanishing at infinity,  $f$ is of critical exponential growth and $g(x)=g(|x|)$ satisfies some technical assumptions with $1\leq q<2$.
The study of equation \eqref{mainequation1}
is mainly motivated by the  Chern-Simons-Schr\"{o}dinger system introduced in \cite{Jackiw1,Jackiw2}
\begin{equation}\label{CSS1}
\begin{gathered}
iD_0\phi+(D_1D_1+D_2D_2)\phi=-\varrho(\phi),~
\partial_0A_1-\partial_1A_0=-\text{Im}(\overline{\phi} D_2\phi),\hfill\\
\partial_0A_2-\partial_2A_0=\text{Im}(\overline{\phi} D_1\phi),~
\partial_1A_2-\partial_2A_1=-\frac{1}{2}|\phi|^2, \hfill\\
\end{gathered}
\end{equation}
This system consists of the nonlinear Schr\"{o}dinger equation augmented by the
gauge field $A_j:\R^{1+2}\to\R$, where $i$ denotes the imaginary unit,
$\partial_0=\partial/\partial t$, $\partial_1=\partial/\partial x_1$, $\partial_2=\partial/\partial x_2$
for $(t,x_1,x_2)\in\R^{1+2}$, $\phi:\R^{1+2}\to \mathbb{C}$ is the complex scalar field and
$D_j=\partial_j iA_j$ is the covariant derivative for $j=0, 1, 2$.
For each $C_0^\infty(\R^{1+2})$ function $\chi$, under
the following gauge transformation
\[
\phi\to \phi e^{i\chi},~
A_j\to A_j-\partial_j\chi,
\]
system  \eqref{CSS1} is invariant because of the Chern-Simons theory \cite{Dunne}.

To study the existence of standing waves of system \eqref{CSS1},
Byeon-Huh-Seok \cite{Byeon1}
investigated the existence of solutions of type
\begin{equation}\label{CSS2}
\begin{gathered}
\phi(t,x)=u(|x|)e^{i\omega t},~ A_0(t,x)=k(|x|), \hfill\\
A_1(t,x)=\frac{x_2}{|x|^2}h_u(|x|),~  A_2(t,x)=\frac{x_1}{|x|^2}h_u(|x|),\hfill\\
\end{gathered}
\end{equation}
where $\omega>0$ denotes the frequency and $u,k,h$ are real value functions depending only
on $|x|$. Note that \eqref{CSS2} satisfies the Coulomb gauge condition with $\chi=ct+n\pi$, where
$n$ is an integer and $c$ is a real constant.
Indeed, inserting \eqref{CSS2} into \eqref{CSS1},
it can be reduced to the following semilinear elliptic equation
\begin{equation}\label{BHS}
 -\Delta u+ (\omega+\zeta) u+ \bigg(\int_{|x|}^\infty \frac{h_u(s)}{s}u^2(s)ds+\frac{h_u^2(|x|)}{|x|^2} \bigg)u=
  \varrho(u)  ~ \text{in}~ \mathbb{R}^2,
\end{equation}
where $\varrho(u)=\overline{\lambda} |u|^{p-2}u$ with $\overline{\lambda}>0$,
$h(s)=\int_0^s\frac{r}{2}u^2(r)dr$,
and $\zeta\in\R$ stands for an integration constant of $A_0$ which takes the form
\[
A_0(r)=\zeta+\int_{r}^\infty \frac{h_u(s)}{s}u^2(s)ds.
\]
Since the constant $\omega+\zeta$ is a gauge invariant of the stationary solutions,
one can take $\zeta=0$ in \eqref{BHS} for simplicity in what follows
and hence
\[
\lim_{|x|\to\infty}A_0(x)=0
\]
which was assumed in \cite{Berge,Jackiw1,Pomponio}.
If $\overline{u}$ solves \eqref{BHS}, inspired by \cite{Deng}, $u= \lambda^{\frac{1}{p-2}}\overline{u}$
  satisfies
\begin{equation}\label{BHS1}
 -\Delta u+  \omega u+\lambda \bigg(\int_{|x|}^\infty \frac{h_u(s)}{s}u^2(s)ds+\frac{h_u^2(|x|)}{|x|^2} \bigg)u=
  |u|^{p-2}u ~ \text{in}~ \mathbb{R}^2,
\end{equation}
where $\lambda=\overline{\lambda}^{-\frac{4}{p-2}}$.
Over the past several decades, equation \eqref{BHS1} has attracted
a lot of interest due to the appearance of the nonlocal
Chern-Simons term
\begin{equation}\label{CSt}
\int_{|x|}^\infty \frac{h_u(s)}{s}u^2(s)ds+\frac{h_u^2(|x|)}{|x|^2},
\end{equation}
which indicates that equation \eqref{BHS1} is not a pointwise identity any longer.
Byeon-Huh-Seok \cite{Byeon1} established the existence of ground state solutions
for every $p>4$ by a suitable constraint minimization procedure, existence and nonexistence of nontrivial
solutions depending on $\lambda>0$ for $p = 4$, and the existence of minimizers under $L^2$-constraint
for every $p\in(2,4)$. In \cite{Pomponio}, the authors investigated that there exists a sharp constant $\omega_0>0$
such that the corresponding variational functional to equation \eqref{BHS1} is bounded from below if $\omega\geq
\omega_0$ and not bounded from below for all $\omega\in(0,\omega_0)$ with $p\in(2,4)$.
By replacing $|u|^{p-2}u$ with a general nonlinearity $f(u)$ in \eqref{BHS1},
 authors in \cite{Cunha} established the multiple results when $f(u)$ is a Berestycki-Gallou\"{e}t-Kavian
type nonlinearity \cite{Berestycki3} and it is the planar version of the well-known
Berestycki-Lions type nonlinearity \cite{Berestycki2,Berestycki3}. Besides,
there are also some other interesting and meaningful
research works on equation \eqref{BHS1} and involving general classes of nonlinearities,
we refer the reader to \cite{Huh1,Pomponio,Pomponio1,Byeon2,2Jiang,
Deng} and the references therein.

\subsection{Handling the planar case}
Let's point out here that the spatial dimension of equations \eqref{mainequation1} and \eqref{BHS1},
is two, thereby the case is special and quite delicate.
Roughly speaking, the Sobolev embedding theorem
ensures $H_0^1(\Omega)\hookrightarrow L^s(\Omega)$ with
$s\in[1,\infty)$ for each bounded domain $\Omega\subset\R^2$,
but $H_0^1(\Omega)\not\hookrightarrow L^\infty(\Omega)$.
Hence,
to overcome the obstacle in the limiting case,
the Trudinger-Moser inequality \cite{PSI,TNS,MJ} can be treated as a substitute of
the Sobolev inequality since it establishes the following sharp maximal exponential integrability for
functions in $H_{0}^{1}(\Omega)$:
\begin{equation}\label{TM}
\sup\limits_{u\in H_{0}^{1}(\Omega):\|\nabla u\|_{L^2
(\Omega)}\leq1}\int_{\Omega}e^{\alpha u^{2}}dx\leq C|\Omega|~\mbox{if}~\alpha\leq4\pi,
\end{equation}
where $C>0$ depends only on $\alpha$, and $|\Omega|$ denotes
the Lebesgue measure of $\Omega$. Subsequently,
this inequality was generalized by P. L. Lions in \cite{LPL}:
Let $\{u_{n}\}$ be a sequence of functions in $H_{0}^{1}(\Omega)$ with $\|\nabla u_{n}\|_{L^2(\Omega)}=1$ such that
$u_{n}\rightharpoonup u_{0}$ weakly in $H_{0}^{1}(\Omega)$, then for all $p<\frac{1}{(1-\|\nabla u_{0}\|_{2}^{2})}$,
there holds
$$
\limsup\limits_{n\rightarrow\infty}\int_{\Omega}e^{4\pi pu_{n}^{2}}dx<+\infty.
$$
Inspired by the Trudinger-Moser type inequality, we
 say that a function $f(s)$ is of \emph{critical exponential growth} if there exists
a constant $\alpha_{0}>0$ such that
\begin{equation}\label{definition}
\lim\limits_{|s|\rightarrow+\infty}
\frac{|f(s)|}{e^{\alpha s^{2}}}=
\left\{
  \begin{array}{ll}
    0, & \forall
\alpha>\alpha_{0}, \\
    +\infty, &\forall \alpha<\alpha_{0}.
  \end{array}
\right.
\end{equation}
This definition was introduced by Adimurthi and Yadava \cite{AYA},
see also de Figueiredo, Miyagaki and Ruf  \cite{Figueiredo} for example.

Unfortunately,
the supremum in \eqref{TM} becomes infinite for domains $\Omega$ with $|\Omega|=\infty$,
and therefore the Trudinger-Moser inequality is not available for the unbounded domains.
As to the whole space $\R^2$, the author in \cite{Bezerra1}
established the following version of the Trudinger-Moser inequality
(see also \cite{Cao} for example):
\[
e^{\alpha u^2}-1\in L^2(\R^2),~\forall \alpha>0~\text{and}~u\in H^1(\R^2).
\]
Moreover, for every $u\in H^1(\R^2)$ with $\|u\|_{L^2(\R^2)}\leq M<+\infty$, there exists a positive constant $C=C(M,\alpha)$
such that
\[
\sup\limits_{u\in H^{1}(\R^2):\|\nabla u\|_{L^2
(\R^2)}\leq1}\int_{\R^2}\big(e^{\alpha u^{2}}-1\big)dx\leq C ~\mbox{if}~\alpha<4\pi.
\]
Concerning some other generalizations, extensions and applications
of the Trudinger-Moser inequalities for bounded and unbounded domains,
we refer to \cite{Figueiredo} and its references therein.
It should be noted that the inequality by Cao \cite{Cao} holds only strictly for $\alpha<4\pi$, i.e. with subcritical growth. For the sharp case, based on symmetrization and blow-up analysis, Ruf  \cite{Ruf} , Li and Ruf \cite{LR} proved that
\begin{equation*}
	%\label{Trudinger-Moser-LR}
\sup_{u\in W_{0}^{1,N}(\mathbb{R}^{N}),\|u\|_{L^{N}}^{N}+\|\nabla u\|_{L^{N}}^{N}\leq1}\int_{\mathbb{R}^{N}}\left(e^{\alpha|u|^{\frac{N}{N-1}}}
-\sum_{k=0}^{N-2}\frac{\alpha^{k}|u|^{kN/(N-1)}}{k!}\right)dx< \infty,\mbox{ if }\alpha\leq\alpha_{N},
\end{equation*}
by replacing the $L^{N}$ norm of $\nabla u$ in the supremum with the standard Sobolev norm.
This inequality was improved by Souza and do \'{O}\cite{DSD} for $N=2$.
Let $(u_{n})$ be in $E$ with $\|u_{n}\|=1$ and suppose that $u_{n}\rightharpoonup u_{0}$ in $E$. Then for all $0<p<\frac{4\pi}{1-\|u_{0}\|^{2}}$, the authors  proved that
$$
\sup_{n}\int_{\mathbb{R}^{2}}(e^{pu_{n}^{2}}-1)dx<\infty.
$$
We refer the readers to the references in the bibliography of this paper for more information about the progress on the elliptic equations with critical exponential growth.

since the problem was set in $\R^2$, it is quite natural to study the existence results for the gauged nonlinear Schr\"{o}dinger equations with
critical growth in the sense of Trudinger-Moser inequality.
The aim of this paper is to continue the investigation of
the gauged Schr\"{o}dinger equations by considering the potentials $V(|x|)$
and $K(|x|)$ (replacing $f(x,u)$ with $K(|x|)f(u)$), which can be
singular at the origin and vanishing at infinity.

\subsection{Assumptions and functional setting}
We impose the hypotheses on $V(|x|)$ and $K(|x|)$
as follow:
\begin{itemize}
  \item  [$(V_0)$] $V\in C(0,\infty)$, $V (r) > 0$ for $r > 0$
  and there exist $a_0>-2$ and $\frac{2}{3} <a<2$ such
that
\[
\limsup_{r\to0^+}\frac{V (r)}{r^{a_0}}<\infty~\text{and}~
\liminf_{r\to+\infty}\frac{V (r)}{r^{a}}>0;
\]
\end{itemize}

\begin{itemize}
  \item  [$(K_0)$] $K\in C(0,\infty)$, $K(r) > 0$ for $r > 0$
  and there exist $b_0>-2$ and $b<a$ such
that
\[
\limsup_{r\to0^+}\frac{K (r)}{r^{b_0}}<\infty~\text{and}~
\limsup_{r\to+\infty}\frac{K (r)}{r^{b}}<\infty.
\]
\end{itemize}
In the sequel, we mean that
$(V,K)\in  \mathcal{K}$ if the continuous functions $V(|x|)$ and $K(|x|)$ satisfy
$(V_0)$ and $(K_0)$, respectively.
Let's mention here that the similar conditions
were presented in \cite{Albuquerque1,Albuquerque2}. However, compared with \cite{Albuquerque2}
the proof for  the nonlocal
Chern-Simons case \eqref{CSt} is much more complicated  and
technical.  Moreover, as far as we know, Ji and Fang
\cite{Ji} had considered the existence and multiplicity
of nontrivial solutions
to the nonhomogeneous Chern-Simons-Schr\"{o}dinger system with strictly positive potential.

 In this work, we suppose that the nonlinearity $f(t)$
is of \emph{critical exponential growth} for $f$  and we also assume that $f$ and $g$ satisfy the following conditions
\begin{itemize}
\item  [$(f_1)$] \emph{$f\in C(\R,\R)$ with $f(t)\equiv0$ for all $t\leq0$ and $f(t)=o(t)$ as $t\to0^+$;}
\item  [$(f_2)$] \emph{$f(t)t-6F(t)\geq 0$ for each $t\in\R$ and $\lim_{t\to+\infty}F(t)/t^6=+\infty$,
where $F(t)= \int_0^tf(s)ds$;}
\item [$(f_3)$] \emph{there exist two constants $p>6$ and $\kappa>0$ such that
 $F(t)\geq \kappa t^p$ for all $t\in[0,1]$;}
\end{itemize}
\begin{itemize}
\item [$(g)$] \emph{$0\leq g(x)=g(|x|)\in L^\infty_{\emph{\text{loc}}}(\R^2)$ and there exist two constants $1\leq q<2$
with $\sigma<q-2$ such that
$\limsup_{r\to+\infty}g(r)/[r^\sigma V^{q/2}(r)]<+\infty.$}
\end{itemize}

Let's denote by $H^1(\R^2)$ the
usual Sobolev space equipped with the usual inner product and norm
\[
(u,v)_{H^1(\R^2)}=\int_{\R^2}\big[\nabla u\nabla v+ uv\big]dx~ \text{and}~
\|u\|_{H^1(\R^2)}= (u,u)^{1/2}_{H^1(\R^2)} ,~ \forall u,v\in H^1(\R^2).
\]
Let
$H_r^1(\R^2)=\{u\in H^1(\R^2):u(x)=u(|x|)\}$ be the subspace endowed with the previous inner product and norm.
For each $\Omega\subset\R^2$, we shall exploit $L^m(\Omega)$ to stand for
the usual Lebesgue space with the standard norm $\|\cdot\|_{L^m(\Omega)}$.
In particular, if $\Omega=\R^2$, instead of $\|\cdot\|_{L^m(\R^2)}$, we'll use
$|\cdot|_m$ for simplicity. Given a constant $s\in[1,+\infty)$,
 as in \cite{Albuquerque1,Albuquerque2},
we introduce the weighted Lebesgue functions $L_V^2(\R^N)$ and
 $L_K^s(\R^N)$ as follows
\[
L_V^2(\R^2)\triangleq\bigg\{u:\R^2\to\R
\big|u~\text{is Lebesgue measurable and}
 \int_{\R^2}V(|x|)|u|^2dx<\infty\bigg\}
\]
and
\[
L_K^s(\R^2)\triangleq\bigg\{u:\R^2\to\R
\big|u~\text{is Lebesgue measurable and}
 \int_{\R^2}K(|x|)|u|^sdx<\infty\bigg\}
\]
respectively, whose norms are defined by
\[
|u|_{V,2}=\bigg(\int_{\R^2}V(|x|)|u|^2dx\bigg)^{\frac{1}{2}}~\text{and}~
|u|_{K,s}=\bigg(\int_{\R^2}K(|x|)|u|^sdx\bigg)^{\frac{1}{s}}.
\]
We also define the functional space
\[
X\triangleq\bigg\{u\in L^2_{\text{loc}}(\R^2)
\big||\nabla u|\in L^2 (\R^2)~\text{and}
 \int_{\R^2}V(|x|)|u|^2dx<\infty\bigg\}
\]
equipped with the norm $\|u\|=(u,u)^{1/2}$ induced by the inner product
\[
(u,v) =\int_{\R^2}\big[\nabla u\nabla v+ V(|x|)uv\big]dx,~ \forall u,v\in X.
\]
One can verify that $(X,\|\cdot\|)$ is a Hilbert space.
Moreover, it's clear that $X_r\triangleq\{u\in X:u(x)=u(|x|)\}$ is closed
in $X$ with respect to the topology corresponding to $\|\cdot\|$
and therefore it is a Hilbert space itself.

Let $C_{0,r}^\infty(\R^2)$ be the set of radially smooth functions with compact support,
then $(X_r,\|\cdot\|)$ is the closure of it. Therefore, we say that
$u:\R^2\to\R$ is a (radial-weak)
solution of equation \eqref{mainequation1} provided that
$u\in X_r$ and it holds the equality
 \begin{align}\label{solution}
 \nonumber 0 &=\int_{\R^2} \big[\nabla u \nabla v+ V(|x|)uv\big]dx+ \lambda \int_{\R^2}\frac{u^2}{|x|^2}
  \bigg(\int_{0}^{|x|}\frac{r}{2}u^2(r)dr\bigg)\bigg(\int_{0}^{|x|}ru(r)v(r)dr\bigg)dx \\
   & \ \
   +\lambda\int_{\R^2}\frac{uv}{|x|^2}\bigg(\int_{0}^{|x|}\frac{r}{2}u^2(r)dr\bigg)^2dx
   - \int_{\R^2}K(|x|)f(u)vdx-\mu\int_{\R^2}g(|x|)|u|^{q-2}uvdx,
\end{align}
for all $v\in C_{0,r}^\infty(\R^2)$, where the Fubini's theorem is used.

\subsection{The main results}
Now, we can state the main results as follows.

\begin{theorem}\label{maintheorem1}
Suppose that $(V,K)\in \mathcal{K}$, $f$ satisfies \eqref{definition} and
 $(f_1)$-$(f_3)$, and $(g)$ hold. If the constant $\kappa>0$ given by $(f_3)$
 satisfies $\kappa\geq\kappa^*$, where
 \[
\kappa^*\triangleq\max\bigg\{\kappa_1,\frac{2\kappa_1}{p}
\bigg[\frac{3\alpha_0 \kappa_1 (p-2)\|K\|_{L^1(B_{1/2}(0))}}{p\pi(1+\frac{b_0}{2})}\bigg]^{\frac{p-2}{2}}\bigg\}
  ~\text{\emph{with}}~\kappa_1=\frac{(16+\lambda)\pi+16\|V\|_{L^1(B_1(0))}}{32\|K\|_{L^1(B_{1/2}(0))}},
 \]
then there is a constant $\mu_*>0$ such that equation
\eqref{mainequation1} admits at least two nontrivial solutions
 with radial symmetry for any $\lambda>0$ and $\mu\in(0,\mu_*)$.
\end{theorem}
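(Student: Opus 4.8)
The plan is to realize the solutions of \eqref{mainequation1} as critical points of the functional
\[
I_{\lambda,\mu}(u)=\frac12\|u\|^2+\frac{\lambda}{2}\int_{\R^2}\frac{h_u^2(|x|)}{|x|^2}u^2\,dx-\int_{\R^2}K(|x|)F(u)\,dx-\frac{\mu}{q}\int_{\R^2}g(|x|)|u|^q\,dx
\]
on $X_r$, and to detect two of them at different energy levels: a negative-energy local minimum, via Ekeland's variational principle, and a positive-energy mountain-pass point. First I would check, using Fubini, $(V_0)$, $(K_0)$, $(g)$, the growth condition \eqref{definition}, and a Trudinger--Moser inequality in $X_r$ with weight $K$, that $I_{\lambda,\mu}\in C^1(X_r,\R)$ with critical points exactly the weak solutions in the sense of \eqref{solution}. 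Two preliminary facts should be isolated: the compact embeddings $X_r\hookrightarrow L^s_K(\R^2)$ for the admissible exponents $s$, and a singular Trudinger--Moser inequality on $X_r$ whose critical threshold is governed by $\alpha_0$ and the order $b_0$ of $K$ at the origin; both follow from $(V_0)$--$(K_0)$ and radial decay, in the spirit of \cite{Albuquerque1,Albuquerque2}. Observe that since the Chern--Simons functional term is $6$-homogeneous, the exponent $6$ in $(f_2)$--$(f_3)$ plays the role that $2$ plays in semilinear problems.

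Next I would verify the mountain--pass geometry. From $(f_1)$ (so $F(t)=o(t^2)$ at $0$), the exponential control of $f$ on a small ball of $X_r$, and $(g)$ with $1\leq q<2$ --- which, via the H\"older estimate permitted by $\sigma<q-2$, gives $\int_{\R^2}g(|x|)|u|^q\,dx\leq C\|u\|^q$ --- one obtains $\rho>0$ and, for all $\mu$ below a constant $\mu_*>0$, a level $\alpha_\rho>0$ with $I_{\lambda,\mu}(u)\geq\alpha_\rho$ whenever $\|u\|=\rho$; here $\rho$ is fixed first and then $\mu$ is taken small, since for $\|u\|=\rho<1$ the concave term $\mu\|u\|^q$ can exceed the quadratic part unless $\mu$ is small. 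For the descending direction, taking $u_0\in C_{0,r}^\infty(\R^2)$ and using $(f_2)$ --- in particular $\lim_{t\to+\infty}F(t)/t^6=+\infty$, which outgrows the sextic Chern--Simons term --- gives $I_{\lambda,\mu}(tu_0)\to-\infty$, hence some $e$ with $\|e\|>\rho$ and $I_{\lambda,\mu}(e)<0$. Simultaneously, for any $v\in X_r$ with $\int g|v|^q>0$ one has $I_{\lambda,\mu}(tv)<0$ for small $t>0$ (again because $q<2$), so $c_0:=\inf_{\overline{B_\rho}}I_{\lambda,\mu}<0$.

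The two solutions are then produced at the levels $c_0<0<\alpha_\rho\leq c_1$, with $c_1$ the mountain--pass level. Ekeland's principle on $\overline{B_\rho}$ (legitimate since $c_0<\inf_{\partial B_\rho}I_{\lambda,\mu}$) yields a bounded $(PS)_{c_0}$ sequence, and the mountain--pass theorem a $(PS)_{c_1}$ sequence. Boundedness of any $(PS)_c$ sequence $\{u_n\}$ comes from the estimate on $I_{\lambda,\mu}(u_n)-\tfrac16\langle I'_{\lambda,\mu}(u_n),u_n\rangle$: the $6$-homogeneous Chern--Simons term cancels, $(f_2)$ forces $\int K[f(u_n)u_n-6F(u_n)]\geq0$, and the $g$-term is absorbed since $q<2$, leaving $\tfrac13\|u_n\|^2\leq c+o(1)\|u_n\|+C\mu\|u_n\|^q$. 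Passing to a weak limit $u$ in $X_r$, the compact embeddings handle the $g$-term and, after the sub-critical reduction based on the Trudinger--Moser inequality, also $\int KF(u_n)$ and $\int Kf(u_n)u_n$; the nonlocal terms converge by Fubini together with boundedness and the strong $L^2_{\mathrm{loc}}$-compactness of the radial profiles $h_{u_n}$. Hence $u$ is a critical point with $I_{\lambda,\mu}(u)=c$; since $I_{\lambda,\mu}(0)=0$ and $c_0<0<c_1$, both limits are nontrivial and distinct.

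The crux --- and the reason for the explicit $\kappa^*$ --- is showing that the sub-critical reduction is available at the mountain--pass level, i.e. that $c_1$ stays strictly below the admissible Trudinger--Moser level; along a $(PS)_{c_1}$ sequence the boundedness estimate only bounds $\limsup\|u_n\|^2$ by a fixed multiple of $c_1$, so one needs $c_1$ below a fixed fraction of the weighted threshold $\sim 4\pi(1+\tfrac{b_0}{2})/\alpha_0$, failing which concentration at the origin cannot be ruled out. One estimates $c_1\leq\max_{t\geq0}I_{\lambda,\mu}(tu_0)$ for a concrete radial cone $u_0$ supported in $B_1(0)$ with $u_0\equiv1$ on $B_{1/2}(0)$ --- so that the lower bound $F(t)\geq\kappa t^p$ from $(f_3)$ applies over the relevant range of $t$ --- evaluates $\tfrac12\|u_0\|^2+\tfrac{\lambda}{2}\int h_{u_0}^2|x|^{-2}u_0^2$ in terms of $\|\nabla u_0\|_{2}^2$, $\|V\|_{L^1(B_1(0))}$ and $\lambda$, bounds $\int KF(tu_0)$ below by $\kappa t^p\|K\|_{L^1(B_{1/2}(0))}$, and optimizes the resulting one-variable function; carrying out this optimization produces precisely the two quantities in $\kappa^*$, and $\kappa\geq\kappa^*$ is exactly what keeps $\max_t I_{\lambda,\mu}(tu_0)$ below the threshold. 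Controlling the interplay between the sextic nonlocal term and the exponential nonlinearity in this optimization, and justifying the limit passage in the nonlocal term along the weakly convergent sequence, are the heaviest technical points.
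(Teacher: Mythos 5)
Your proposal is correct and follows essentially the same route as the paper: the same functional on $X_r$, a mountain-pass solution at positive level combined with an Ekeland local minimizer at negative level, boundedness via $J(u_n)-\tfrac16\langle J'(u_n),u_n\rangle$ using the $6$-homogeneity of the Chern--Simons term and $(f_2)$, and the level estimate $c<\tfrac{\pi}{3\alpha_0}(1+\tfrac{b_0}{2})$ obtained from a cut-off equal to $1$ on $B_{1/2}(0)$ and supported in $B_1(0)$, optimized in $t$ to produce exactly $\kappa^*$. The only cosmetic difference is in how the convergence of the nonlocal term along the PS sequence is phrased; the paper carries this out by splitting $\R^2$ into $B_{R_\epsilon}(0)$ and its complement using the radial decay lemma, which is the concrete form of the local compactness you invoke.
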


\begin{remark}
 It should be pointed out that the solutions obtained in Theorem \ref{maintheorem1}
(and in Corollary \ref{corollary} and Theorems \ref{maintheorem2}-\ref{maintheorem3} below)
cannot be recognized as belonging to $X$ since the classical Palais' Principle of
Symmetric Criticality doesn't apply due to the fact that
 the energy functional
$J$ could be not differentiable, not even well-defined, on the whole space $X$.
 We expect to verify these solutions are unnecessarily radially symmetric,
but we postpone this question to a future work.
\end{remark}

By Theorem \ref{mainequation1},
we can immediately obtain the following result.

\begin{corollary}\label{corollary}
Suppose that $(V,K)\in \mathcal{K}$, $f$ satisfies \eqref{definition},
 $(f_1)$-$(f_3)$, and $(g)$ hold. Then, for some sufficiently
 large $\kappa>0$ and small $\mu>0$, equation
\eqref{mainequation1} has two nontrivial solutions for all $\lambda>0$.
\end{corollary}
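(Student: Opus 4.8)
The plan is to deduce Corollary~\ref{corollary} directly from Theorem~\ref{maintheorem1}, the point being that once $(V,K)\in\mathcal{K}$ the threshold $\kappa^*$ in Theorem~\ref{maintheorem1} is, for each fixed $\lambda>0$, a finite and strictly positive real number. Hence the phrase ``sufficiently large $\kappa$'' means precisely $\kappa\geq\kappa^*(\lambda)$, and once such a $\kappa$ is fixed the existence of an admissible window $(0,\mu_*)$ for $\mu$ producing two nontrivial solutions is exactly the content of Theorem~\ref{maintheorem1}. So the only thing that really needs checking is that the constants entering $\kappa^*$ are well defined.

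First I would verify the finiteness and positivity of the weighted $L^1$-norms occurring in $\kappa_1$. Writing $\|V\|_{L^1(B_1(0))}=2\pi\int_0^1 V(r)\,r\,dr$, the hypothesis $(V_0)$ gives $V(r)\leq C r^{a_0}$ for $r$ near $0$ with $a_0>-2$, so that $\int_0^1 V(r)\,r\,dr\lesssim\int_0^1 r^{a_0+1}\,dr<+\infty$, i.e. $\|V\|_{L^1(B_1(0))}<+\infty$. Analogously, $(K_0)$ with $b_0>-2$ yields $\|K\|_{L^1(B_{1/2}(0))}<+\infty$, while the continuity and strict positivity of $K$ on $(0,\infty)$ force $\|K\|_{L^1(B_{1/2}(0))}>0$. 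Consequently, for every $\lambda>0$ the quantity
\[
\kappa_1=\frac{(16+\lambda)\pi+16\|V\|_{L^1(B_1(0))}}{32\|K\|_{L^1(B_{1/2}(0))}}
\]
is a finite positive number, and therefore so is $\kappa^*=\kappa^*(\lambda)$ as defined in Theorem~\ref{maintheorem1}.

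With this observation in hand, fix an arbitrary $\lambda>0$ and choose any $\kappa\geq\kappa^*(\lambda)$; this is the ``sufficiently large'' $\kappa$ of the statement, and it is of course permitted to depend on $\lambda$ since the conclusion is asserted for each $\lambda>0$ separately. All the hypotheses of Theorem~\ref{maintheorem1} then hold, so there exists $\mu_*=\mu_*(\lambda,\kappa)>0$ such that for every $\mu\in(0,\mu_*)$ equation~\eqref{mainequation1} admits at least two nontrivial radially symmetric solutions (the first as a local minimizer via Ekeland's variational principle, the second of mountain-pass type, exactly as in the proof of Theorem~\ref{maintheorem1}). As $\lambda>0$ was arbitrary, this proves the corollary. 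There is in fact no genuine obstacle in this argument: the entire analytic difficulty has already been absorbed into Theorem~\ref{maintheorem1}, and the only step worth a line of verification is the finiteness and positivity of the constants defining $\kappa^*$, which is guaranteed precisely by the behaviour of $V$ and $K$ near the origin imposed in $(V_0)$ and $(K_0)$.
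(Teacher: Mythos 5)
Your proposal is correct and matches the paper, which derives the corollary as an immediate consequence of Theorem \ref{maintheorem1} by taking $\kappa\geq\kappa^*$; your additional check that $\|V\|_{L^1(B_1(0))}$ and $\|K\|_{L^1(B_{1/2}(0))}$ are finite and positive (so that $\kappa^*$ is a well-defined positive number) is the only detail worth spelling out, and you handle it correctly via $a_0>-2$ and $b_0>-2$.
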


For the second existence result,
we replace $(f_3)$
by the following hypotheses on $f$ :
\begin{itemize}
  \item [$(f_4)$] \emph{there exist constants $t_0>0$, $M_0>0$ and $\vartheta\in(0,1]$ such that
  $$0<t^\vartheta F(t)\leq M_0f(t),  \forall~ t\geq t_0;$$}
  \item [$(f_5)$] \emph{$\liminf_{t\to+\infty}F(t)/e^{\alpha_0 t^2}\triangleq \beta_0>0$.}
\end{itemize}

\begin{theorem}\label{maintheorem2}
Suppose that $(V,K)\in \mathcal{K}$, $f$ satisfies \eqref{definition}, $(f_1)-(f_2)$ and
$(f_4)-(f_5)$, and (g) hold. If we suppose that $\liminf_{r\to0^+}K(r)/r^{(b_0-22)/{12}}>0$,
  there is a constant $\mu_{**}>0$ such that equation
\eqref{mainequation1} has at least two nontrivial solutions for any $\lambda>0$
and $\mu\in(0,\mu_{**})$.
\end{theorem}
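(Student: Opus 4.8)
The plan is to obtain the first solution as a local minimizer of small negative energy (via Ekeland's variational principle) and the second as a mountain--pass critical point, after isolating a range of energy levels on which a Palais--Smale condition holds. Throughout I work with the energy functional $J_\mu(u)=\tfrac12\|u\|^2+\tfrac{\lambda}{2}\mathcal N(u)-\int_{\R^2}K(|x|)F(u)\,dx-\tfrac{\mu}{q}\int_{\R^2}g(|x|)|u|^q\,dx$, where $\mathcal N(u):=\int_{\R^2}\tfrac{h_u^2(|x|)}{|x|^2}u^2\,dx$, defined on $X_r$ (in fact on the subset where it is finite and $C^1$, cf.\ the Remark). First I would record the analytic ingredients: by $(V,K)\in\mathcal{K}$ and $(g)$ the embeddings of $X_r$ into the weighted spaces $L^s_K(\R^2)$, $s\ge1$, and into the $g$-weighted $L^q$-space are compact, so the last two terms of $J_\mu$ and their differentials are sequentially weakly continuous on bounded sets; by the sharp Trudinger--Moser inequality on $\R^2$ recalled above, together with $(f_1)$, the critical growth \eqref{definition} and $(f_4)$, the quantities $\int_{\R^2}Kf(u)v$ and $\int_{\R^2}KF(u)$ stay controlled for $\|u\|$ in bounded sets; and $\mathcal N$ is nonnegative, $6$-homogeneous, continuous and bounded on bounded sets, its weight vanishing like $|x|^4$ at the origin so that no singularity arises there.

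Next I would verify the geometry, uniformly for $\mu$ small, exactly as in the proof of Theorem \ref{maintheorem1}: there are $\rho,\delta>0$ with $J_\mu(u)\ge\delta$ whenever $\|u\|=\rho$ --- here $(f_1)$ together with the Trudinger--Moser estimate controls $\int KF(u)$, while $\int g|u|^q\le C\|u\|^q$ with $q<2$ is absorbed for $\mu$ small --- and, choosing any $v\ge0$ with $\int g|v|^q>0$, one has $J_\mu(tv)<0$ for small $t>0$, since near $0$ the concave term $-\tfrac{\mu}{q}t^q\int g|v|^q$ dominates $\tfrac{t^2}{2}\|v\|^2+O(t^6)$; hence $m_\mu:=\inf_{\|u\|\le\rho}J_\mu(u)<0$. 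By $(f_2)$, $F(t)/t^6\to+\infty$ overcomes the $6$-homogeneous term $\mathcal N$, so there is $e$ with $\|e\|>\rho$ and $J_\mu(e)<0$, and the minimax value $c_\mu:=\inf_{\gamma}\max_t J_\mu(\gamma(t))\ge\delta>0$ is well defined. For compactness I claim $J_\mu$ satisfies $(PS)_c$ whenever $c<c^\ast:=\tfrac{2\pi}{\alpha_0}$: a $(PS)_c$ sequence is bounded because $J_\mu(u_n)-\tfrac16\langle J_\mu'(u_n),u_n\rangle\ge\tfrac13\|u_n\|^2-C\mu\|u_n\|^q$ by $(f_2)$ and $q<2$; its weak limit is a critical point by the weak continuity above; and a concentration argument based on the Lions refinement of Trudinger--Moser shows that, as long as $c<c^\ast$, a weak limit equal to $0$ forces $\int Kf(u_n)u_n\to0$ (using $(f_4)$ also $\int KF(u_n)\to0$), hence $\|u_n\|\to0$, so $c=0$. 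Since $m_\mu<0<\delta$, Ekeland's principle applied to $J_\mu$ on the closed ball $\{\|u\|\le\rho\}$ --- chosen small enough that $\rho^2<\tfrac{4\pi}{\alpha_0}$, so compactness there is automatic --- yields a $(PS)_{m_\mu}$ sequence staying in the interior, whose limit $u_1$ is a nontrivial critical point with $J_\mu(u_1)=m_\mu<0$.

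The second solution is the mountain--pass point: the mountain--pass theorem produces a $(PS)_{c_\mu}$ sequence, converging to a critical point $u_2$ with $J_\mu(u_2)=c_\mu>0$ --- hence $u_2\ne0$ and $u_2\ne u_1$ --- provided $c_\mu<c^\ast$. Establishing $c_\mu<c^\ast$ is the heart of the argument and the step I expect to be the main obstacle. I would estimate $c_\mu\le\max_{t\ge0}J_\mu(tw_n)$ along the Moser functions $w_n$ (truncated logarithms supported in $B_1(0)$, normalized so that $\|\nabla w_n\|_2=1$), for which direct computation using $(V_0)$ gives $\|w_n\|^2=1+o(1)$, $\int g|w_n|^q\to0$, $h_{w_n}=O((\log n)^{-1})$ and $\mathcal N(w_n)=O(1)$. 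Arguing by contradiction, suppose $\max_t J_\mu(tw_n)\ge c^\ast$ for all $n$, with maximizer $t_n>0$; the stationarity identity at $t_n$ reads $t_n^2\|w_n\|^2+3\lambda t_n^6\mathcal N(w_n)=\int_{\R^2}Kf(t_nw_n)t_nw_n\,dx+\mu t_n^q\int_{\R^2}g|w_n|^q\,dx$, and on $B_{1/n}(0)$, where $w_n^2=\tfrac{\log n}{2\pi}$, hypotheses $(f_5)$ and $(f_4)$ give $f(t_nw_n)t_nw_n\gtrsim(t_nw_n)^{1+\vartheta}e^{\alpha_0t_n^2w_n^2}=(t_nw_n)^{1+\vartheta}n^{\alpha_0t_n^2/(2\pi)}$, while the additional lower bound assumed on $K$ near $0$ prevents $\int_{B_{1/n}(0)}K(|x|)\,dx$ from decaying faster than a controlled power of $n$; since the left-hand side is $O(t_n^2+t_n^6)$, this pins $\limsup_n t_n^2$ strictly below $\tfrac{4\pi}{\alpha_0}$. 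Substituting back into $J_\mu(t_nw_n)=\tfrac{t_n^2}{2}\|w_n\|^2+\tfrac{\lambda}{2}t_n^6\mathcal N(w_n)-\int KF(t_nw_n)\,dx-\tfrac{\mu}{q}t_n^q\int g|w_n|^q\,dx$ and using $(f_5)$ once more to make $\int KF(t_nw_n)$ strictly positive of definite size contradicts $J_\mu(t_nw_n)\ge c^\ast$; hence $c_\mu<c^\ast$ for $n$ large, and $u_2$ exists.

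The two genuinely delicate points are: (a) the precise bookkeeping of the $6$-homogeneous nonlocal contribution $\mathcal N(w_n)$ along the concentrating family $w_n$ --- this is exactly why $(f_2)$ must impose growth strictly faster than $t^6$, and why one cannot avoid estimating $h_{w_n}$ and $\mathcal N(w_n)$ carefully; and (b) matching the exponent in the near-origin assumption on $K$ with the concentration rate of $w_n$, so that the exponential gain from $(f_5)$ survives integration against the (possibly singular or vanishing) weight $K$ on $B_{1/n}(0)$. Everything else --- the geometry, the uniform-in-$\mu$ bounds, the Ekeland step and the Palais--Smale analysis below $c^\ast$ --- proceeds parallel to the proof of Theorem \ref{maintheorem1}, now with $(f_4)$ and $(f_5)$ taking over the quantitative role played there by $(f_3)$.
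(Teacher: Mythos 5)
Your overall strategy is the one the paper uses: a mountain--pass solution with positive energy plus an Ekeland local minimizer with negative energy, with the only genuinely new ingredient being the estimate of the mountain--pass level along Moser functions, obtained by contradiction from the stationarity identity at the maximizing $t_n$, the lower bound $f(t)t\gtrsim t^{1+\vartheta}e^{\alpha_0 t^2}$ coming from $(f_4)$--$(f_5)$, and the near-origin lower bound on $K$. That matches Lemma \ref{2estimate} and the proof of Theorem \ref{maintheorem2}. However, there is a genuine quantitative gap in your compactness threshold. You claim $(PS)_c$ for all $c<c^\ast:=\tfrac{2\pi}{\alpha_0}$, but the relevant Trudinger--Moser inequality here is the \emph{weighted} one (Lemmas \ref{nonlinearity1}--\ref{nonlinearity2}), whose critical exponent is $4\pi(1+\tfrac{b_0}{2})$, not $4\pi$; since $b_0$ may be negative, the admissible range of $\|u_n\|^2$ can be much smaller than $\tfrac{4\pi}{\alpha_0}$. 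Moreover the bound $J(u_n)-\tfrac16\langle J'(u_n),u_n\rangle\ge\tfrac13\|u_n\|^2-C\mu\|u_n\|^q$ only yields $\limsup\|u_n\|^2\le 6c+O(\mu^{2/(2-q)})$ after absorbing the $\|u_n\|^q$ term, so compactness requires $c<\tfrac{2\pi}{3\alpha_0}(1+\tfrac{b_0}{2})$ minus room for the $\mu$-term; the paper works with $c<\tfrac{\pi}{3\alpha_0}(1+\tfrac{b_0}{2})$. With your $c^\ast$ the norms of a $(PS)_c$ sequence could be of order $\tfrac{12\pi}{\alpha_0}$, far outside the range where Lemma \ref{nonlinearity2} applies, so the compactness claim is unjustified and in general false. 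This is not cosmetic: the exponent $(b_0-22)/12$ in the hypothesis on $K$ is tuned precisely so that the contradiction $\tfrac{\alpha_0 t_0^2}{2\pi}>\tfrac{b_0+2}{12}$ closes when $t_0^2\ge\tfrac{2\pi}{3\alpha_0}(1+\tfrac{b_0}{2})$, i.e.\ when the level is measured against the correct threshold.

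A second, smaller gap: you record only $\mathcal N(w_n)=O(1)$ along the Moser family, but the contradiction argument needs the lower bound $t_0^2\ge 2c^\ast$ on the limit of the maximizers, which is extracted from $\tfrac{t_n^2}{2}+\tfrac{\lambda t_n^6}{2}c(w_n)-\int K F(t_nw_n)\ge c^\ast$ only because $c(w_n)\to0$ (and $\int K F(t_nw_n)\to \int K F(t_0\cdot 0)=0$ off the concentration set). The paper proves $c(\overline w_n)\to0$ by showing $\overline w_n\to0$ in $L^2(\R^2)$ and using the pointwise bound $h_{\overline w_n}(|x|)\le \tfrac{1}{4\pi}\int_{B_{|x|}}\overline w_n^2$; with only $O(1)$ control the $6$-homogeneous term would contaminate the lower bound on $t_0$ and the final contradiction would not follow. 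Both gaps are repairable by adopting the paper's constants, but as written the key inequality $c<c^\ast$ does not deliver compactness.
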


Finally, by applying the genus theory, we investigate the existence of infinitely many solutions for
equation \eqref{mainequation1}.
In fact, we are able to prove a further result.

\begin{theorem}\label{maintheorem3}
Under the assumptions in Theorems \ref{maintheorem1}, or \ref{maintheorem2},
if $f(t)=-f(-t)$ for each $t\in\R$,
then there exists a constant $\mu_{**}^*>0$ such that equation
\eqref{mainequation1} has infinitely many solutions for any $\lambda>0$ and $\mu\in(0,\mu_{**}^*)$.
\end{theorem}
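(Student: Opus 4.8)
The plan is to adapt the classical symmetric Mountain-Pass / genus machinery (in the spirit of Rabinowitz and Ambrosetti--Rabinowitz, combined with the truncation technique of García Azorero--Peral and Ambrosetti--Brezis--Cerami for the concave term) to the functional $J$ associated with \eqref{mainequation1} on $X_r$. Under the assumptions of Theorem \ref{maintheorem1} or \ref{maintheorem2}, all of the compactness estimates and the crucial Trudinger--Moser bounds on the weighted spaces $L_K^s(\R^2)$ have already been established in proving those theorems; what changes here is only that $f$ is odd, so $F$ is even and the nonlinearity $u\mapsto K(|x|)f(u) + \mu g(|x|)|u|^{q-2}u$ is odd in $u$, hence $J$ is an even functional. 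First I would record that $J\in C^1(X_r,\R)$ on the relevant sublevel set (or, following the earlier remark, work with the restricted functional where differentiability is available), that $J(0)=0$, and that $J$ is even.

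The second step is to produce, for every $k\in\mathbb{N}$, a $k$-dimensional subspace $E_k\subset X_r$ of radial functions on which $J$ is negative away from a neighborhood of the origin, so that the genus-based minimax levels
\[
c_k \triangleq \inf_{A\in\Gamma_k}\ \sup_{u\in A} J(u), \qquad \Gamma_k=\{A\subset X_r : A=-A,\ A\ \text{compact},\ \gamma(A)\ge k\},
\]
are well defined and negative. Here the concave perturbation $\mu g(|x|)|u|^{q-2}u$ with $1\le q<2$ is essential: on a finite-dimensional space all norms are equivalent and the term $-\frac{\mu}{q}\int g|u|^q$ dominates near $0$, pulling $J$ below zero on a small sphere; the Chern-Simons term, being $6$-homogeneous in an appropriate sense and nonnegative, is negligible at small scales. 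This gives $-\infty<c_k<0$ for all $k$, and $c_k\le c_{k+1}$, $c_k\to 0^-$.

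Next I would verify a localized Palais--Smale condition: there is a level $c^*>0$ (governed by the critical Trudinger--Moser threshold $\alpha_0$, exactly as in the proof of Theorems \ref{maintheorem1}--\ref{maintheorem2}) such that $J$ satisfies $(PS)_c$ for every $c<c^*$. Since the $c_k$ are negative they lie below $c^*$, so $(PS)_{c_k}$ holds; this is precisely where one reuses the concentration-compactness analysis, the radial compact embeddings into $L_K^s(\R^2)$, and the Brezis--Lieb type splitting already developed earlier in the paper. I would then choose $\mu_{**}^*>0$ small enough (smaller than $\mu_*$ from Theorem \ref{maintheorem1}, resp.\ $\mu_{**}$ from Theorem \ref{maintheorem2}) so that all the quantitative thresholds survive, and invoke the standard symmetric minimax theorem: if $c_k=c_{k+1}=\cdots=c_{k+m}\triangleq c<0$ then $\gamma\big(K_c\big)\ge m+1$, where $K_c$ is the critical set at level $c$; in particular $K_c$ is infinite whenever $m\ge 1$. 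If instead all the $c_k$ are distinct, then $\{c_k\}$ is an infinite sequence of distinct negative critical values. Either way, \eqref{mainequation1} has infinitely many (radial-weak) nontrivial solutions, all with $J<0$.

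The main obstacle I anticipate is the interplay between the nonlocal Chern-Simons term and the genus construction at the level of the $(PS)$ condition. Unlike in the purely local Schrödinger case, a Palais--Smale sequence does not immediately yield boundedness from the Ambrosetti--Rabinowitz-type conditions $(f_2)$ or $(f_4)$ alone, because the quartic Chern-Simons energy scales differently from the gradient term; one must exploit the sign and the precise $6$-superlinear growth in $(f_2)$ (or the $\vartheta$-condition $(f_4)$) together with the nonnegativity of \eqref{CSt} to control $\|u_n\|$, and then rule out vanishing/dichotomy of the weak limit using the radial setting and the weighted embeddings. A secondary technical point is ensuring the truncated/modified functional used to get $c_k<0$ actually coincides with $J$ near its negative critical points, so that the solutions produced solve the original equation — this is handled, as usual, by checking that negative critical points have small norm, where truncation is inactive. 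Apart from these, the argument is a by-now-routine combination of the tools already assembled in the proofs of Theorems \ref{maintheorem1} and \ref{maintheorem2}.
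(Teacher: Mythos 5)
Your proposal is correct and follows essentially the same route as the paper: an even truncated functional in the spirit of Garc\'{\i}a Azorero--Peral, negative genus-based minimax levels produced by the concave term $\mu g(|x|)|u|^{q-2}u$ on finite-dimensional radial subspaces, a local Palais--Smale condition at negative levels, and the observation that negative critical points have small norm so the truncation is inactive. The paper packages the final multiplicity step via Kajikiya's symmetric mountain-pass theorem rather than the classical $\gamma(K_c)\ge m+1$ statement, but this is an inessential difference.
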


We need to point out that,
different from the previous literatures
\cite{Byeon1,Pomponio,Pomponio1,Byeon2,2Jiang,
Deng,Azzollini}, one cannot easily verify that the functional $c(u)$
introduced in Section 2 below
is well-defined for every $u\in X_r$. In fact, it's obvious to conclude that $0\leq c(u)<+\infty$
for each $u\in L^2(\R^2)\cap L^4(\R^2)$, however, it seems that
$X\not\hookrightarrow L^2(\R^2)\cap L^4(\R^2)$.
As a consequence, this difficulty
prevents us considering equation \eqref{mainequation1} in a standard way.
\\\\
\textbf{Organization of the paper.}
The paper is organized as follow. In Section 2, we'll introduce some
useful preliminaries which can be exploited later on.
In Section 3, by employing the well-known mountain-pass theorem and Ekeland's variational principle, we
search for two different nontrivial
  solutions for equation \eqref{mainequation1} in Theorems \ref{maintheorem1} and \ref{maintheorem2}.
  At last,
we are devoted to establishing the existence of infinitely many solutions for equation
  \eqref{mainequation1} by applying the genus theory in Section 4.
 \\\\
 \textbf{Notations.} Throughout this paper we shall denote by $C$ and $C_i$ ($i\in{\mathbb N}$) for various positive constants
whose exact value may change from lines to lines but are not essential to the analysis of the problem.
 We exploit $``\to"$ and $``\rightharpoonup"$ to denote the strong and weak convergence
in the related function spaces, respectively. For any $\rho>0$ and each $x\in \R^2$, $B_\rho(x)$ denotes the ball of radius
$\rho$ centered at $x$, that is, $B_\rho(x):=\{y\in \R^2:|y-x|<\rho\}$.

Let $(E,\|\cdot\|_E)$ be a Banach space with its dual space $(E^{*},\|\cdot\|_{*})$, and $\Phi$ be its functional on $E$.
The Palais-Smale sequence at level $c\in\R$ ($(PS)_c$ sequence in short) corresponding to $\Phi$ means that $\Phi(x_n)\to c$
and $\Phi^{\prime}(x_n)\to 0$ as $n\to\infty$, where $\{x_n\}\subset E$. If for any $(PS)_c$ sequence $\{x_n\}$ in $E$,
there exists a subsequence $\{x_{n_{k}}\}$ such that $x_{n_{k}}\to x_0$ in $E$ for some $x_0\in E$, then we say that the
functional $\Phi$ satisfies the so called $(PS)_c$ condition.

\section{Preliminaries}
In this section, we introduce some preliminaries for the main results of this paper.
Firstly, let's recall a variant of the Radial Lemma developed by Strauss
\cite{Strauss}.

\begin{lemma}\label{radial}
Suppose that $(V_0)$ holds, for every $u\in X_{r}$, there exist constants $R_0>0$
and $C>0$ independent of $u$ such that
\[
|u(x)|\leq C|x|^{-\frac{a+2}{4}}\|u\|,~\forall |x|\geq R_0.
\]
\end{lemma}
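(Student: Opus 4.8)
The plan is to exploit the radial symmetry of $u\in X_r$ together with the growth condition on $V$ near infinity in $(V_0)$. Writing $u(x)=u(r)$ with $r=|x|$, the key identity is the fundamental theorem of calculus applied to the function $r\mapsto r^{\beta}u^2(r)$ for a suitable exponent $\beta>0$ to be chosen (one expects $\beta=\frac{a+2}{2}$). Concretely, for $R_0\le r<\rho$ one writes
\[
r^{\beta}u^2(r)=\rho^{\beta}u^2(\rho)-\int_r^\rho \frac{d}{ds}\big(s^{\beta}u^2(s)\big)\,ds
=\rho^{\beta}u^2(\rho)-\int_r^\rho\big(\beta s^{\beta-1}u^2(s)+2s^{\beta}u(s)u'(s)\big)\,ds.
\]
First I would argue that, since $u\in X_r$ (so $\nabla u\in L^2(\R^2)$ and $\int V u^2<\infty$), one can find a sequence $\rho_n\to\infty$ along which $\rho_n^{\beta}u^2(\rho_n)\to 0$; indeed if no such sequence existed then $u^2(s)\gtrsim s^{-\beta}$ for all large $s$, and since $(V_0)$ gives $V(s)\gtrsim s^{a}$ near infinity, we would get $V(s)u^2(s)\gtrsim s^{a-\beta}$, whose integral $\int s^{a-\beta}\,s\,ds=\int s^{a-\beta+1}ds$ diverges once $\beta\le a+2$, contradicting $\int_{\R^2}V u^2<\infty$. (This forces the constraint $\beta\le a+2$; the sharp choice giving the stated exponent $\frac{a+2}{4}$ on $|u|$ is $\beta=\frac{a+2}{2}$, which is $<a+2$ since $a>-2$, so the argument applies.)

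Next I would pass to the limit $\rho=\rho_n\to\infty$ in the displayed identity and bound the two remaining integrals over $[r,\infty)$. For the gradient term, Cauchy–Schwarz gives
\[
\int_r^\infty 2s^{\beta}|u(s)||u'(s)|\,ds
\le 2\Big(\int_r^\infty s^{2\beta-a-1}u^2(s)\,ds\Big)^{1/2}\Big(\int_r^\infty s^{a+1}|u'(s)|^2\,ds\Big)^{1/2},
\]
and with $\beta=\frac{a+2}{2}$ one has $2\beta-a-1=1$ and $a+1$, so the first factor is controlled by $\big(\int s\,u^2\,ds\big)^{1/2}\lesssim \big(\int V u^2\big)^{1/2}\lesssim\|u\|$ (using $V(s)\gtrsim s^a\ge c\,s$ is not quite right — instead use $V(s)\ge c s^{a}$ and note $s=s^{a}\cdot s^{1-a}$; since $a<2$ we need the integral weighted correctly, so one actually keeps the weight $s^{a+1}$: bound $\int_r^\infty s^{a+1}u^2\,ds\le C\int_r^\infty V(s)u^2(s)\,s\,ds\le C\|u\|^2$, which requires $2\beta-a-1=a+1$, i.e. $\beta=a+1$). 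The two natural choices $\beta=\frac{a+2}{2}$ and $\beta=a+1$ must be reconciled; the correct route is to split $s^{\beta}|u||u'| = (s^{(a+1)/2}|u|)(s^{\beta-(a+1)/2}|u'|)$ and require $2\beta-(a+1)=a+1$ as well, forcing $\beta=a+1$ — but then $\beta\le a+2$ still holds. I would therefore take $\beta=a+1$, obtaining $r^{a+1}u^2(r)\le C(\|\nabla u\|_{L^2}^2+\|u\|_{V,2}^2)=C\|u\|^2$ for $r\ge R_0$, hence $|u(r)|\le C r^{-(a+1)/2}\|u\|$. Since $a<2$ implies $\frac{a+1}{2}<\frac{3}{2}$, and the claimed exponent is $\frac{a+2}{4}$, one checks $\frac{a+1}{2}\ge\frac{a+2}{4}\iff 2(a+1)\ge a+2\iff a\ge 0$; for $\frac23<a<2$ this holds, so the sharper decay $r^{-(a+1)/2}$ implies the stated $r^{-(a+2)/4}$ bound (for $|x|$ large, absorbing constants).

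The main obstacle I anticipate is bookkeeping the correct weight exponent so that both the "good sequence" step and the Cauchy–Schwarz step use the \emph{same} $\beta$ and genuinely close against $\|u\|^2 = \|\nabla u\|_{L^2}^2 + \int V u^2$; the subtlety is that $(V_0)$ only gives a one-sided lower bound $V(r)\gtrsim r^{a}$ for large $r$, with $a<2$, so one cannot simply dominate the flat weight $s$ (from polar coordinates) by $V(s)$ pointwise, and care is needed near the crossover radius $R_0$. Once the exponent $\beta$ is fixed consistently (I expect $\beta=a+1$, or equivalently one may phrase everything with the weaker target $\frac{a+2}{2}$ and lose nothing), the rest is a routine application of the fundamental theorem of calculus and Hölder's inequality, and the constant $C$ depends only on $a$ and the constants implicit in $(V_0)$, not on $u$.
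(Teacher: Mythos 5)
There is a genuine gap, and it sits exactly at the point where you ``reconcile'' the two candidate exponents: your first instinct $\beta=\tfrac{a+2}{2}$ is the correct one and closes the argument, while your final choice $\beta=a+1$ does not. The error is a miscount of the gradient weight. In $\R^2$ the polar-coordinate identity gives $\|\nabla u\|_{L^2(\R^2)}^2=2\pi\int_0^\infty|u'(s)|^2\,s\,ds$, so the only weighted integral of $|u'|^2$ that is controlled by $\|u\|^2$ is the one with weight $s^{1}$, not $s^{a+1}$. Hence the cross term must be split as $s^{\beta}|u||u'|=\big(s^{\beta-\frac12}|u|\big)\big(s^{\frac12}|u'|\big)$, and Cauchy--Schwarz forces the constraint $2\beta-1\le a+1$, i.e.\ $\beta\le\tfrac{a+2}{2}$, so that the first factor is dominated by $\big(\int_r^\infty s^{a+1}u^2\,ds\big)^{1/2}\le C\big(\int_r^\infty V(s)u^2(s)\,s\,ds\big)^{1/2}\le C\|u\|$ for $r\ge R_0$, using the lower bound $V(s)\ge c\,s^{a}$ from $(V_0)$. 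With $\beta=a+1$ the second factor becomes $\big(\int_r^\infty s^{a+1}|u'|^2\,ds\big)^{1/2}$, which carries a strictly larger weight than $s$ (since $a>\tfrac23>0$) and is therefore not finite for a general $u\in X_r$; the equation ``$2\beta-(a+1)=a+1$'' that forced $\beta=a+1$ came from incorrectly attaching the weight $s^{a+1}$ to the gradient. A symptom of the slip is that your conclusion $|u(r)|\le Cr^{-(a+1)/2}\|u\|$ is strictly stronger than the stated rate $r^{-(a+2)/4}$ for $a>0$, and such a stronger decay is not available from membership in $X_r$ alone.

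The remedy is simply to keep $\beta=\tfrac{a+2}{2}$ throughout, which is consistent with the claimed exponent $\tfrac{a+2}{4}=\tfrac{\beta}{2}$. Your ``good sequence'' step then works since $\beta=\tfrac{a+2}{2}<a+2$ (as $a>-2$); the zeroth-order term satisfies $\int_r^\infty \beta s^{\beta-1}u^2\,ds=\beta\int_r^\infty s^{a/2}u^2\,ds\le \beta\int_r^\infty s^{a+1}u^2\,ds\le C\|u\|^2$ for $r\ge\max\{1,R_0\}$ because $a/2\le a+1$; and the cross term closes as above since $2\beta-1=a+1$, giving $r^{\frac{a+2}{2}}u^2(r)\le C\|u\|^2$, which is exactly the statement. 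For reference, the paper does not prove this lemma but cites Strauss and the method of Su--Wang--Willem, whose proof is precisely this computation with $\beta=\tfrac{a+2}{2}$.
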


Proceeding as the proof of \cite[Theorem 2]{Su},
we have the following two imbedding results.

\begin{lemma}\label{imbedding1}
 For all $R>0$, the space $X$ can be continuously imbedded into $H^1(B_R(0))\triangleq
 \{u\in H^1(\R^2)|u\equiv u|_{B_R(0)}\}$ and $X\hookrightarrow L^\nu(B_R(0))$ is continuous
for all $\nu\geq1$.
\end{lemma}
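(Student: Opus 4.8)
The plan is to establish the embedding $X \hookrightarrow H^1(B_R(0))$ for each fixed $R>0$, and then invoke the standard Sobolev embedding $H^1(B_R(0)) \hookrightarrow L^\nu(B_R(0))$, valid for all $\nu \in [1,\infty)$ since $B_R(0)$ is a bounded smooth planar domain. The only nontrivial point is the first embedding: given $u \in X$, we already control $\|\nabla u\|_{L^2(\R^2)}$ and $\int_{\R^2} V(|x|)|u|^2\,dx$, but on a small ball around the origin the weight $V$ may degenerate (the hypothesis $(V_0)$ only gives $\limsup_{r\to 0^+} V(r)/r^{a_0} < \infty$ with $a_0 > -2$, so $V$ can vanish there), and this is exactly where a direct pointwise comparison $|u|^2 \le C V(|x|)|u|^2$ fails. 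Hence the main obstacle is to bound $\|u\|_{L^2(B_R(0))}$ by $\|u\|$ despite the possible degeneracy of $V$ near $0$.

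First I would split $B_R(0)$ into an inner region $B_\delta(0)$ and the annular region $A := B_R(0)\setminus B_\delta(0)$ for a suitably small $\delta>0$. On the annulus $A$, $V$ is continuous and strictly positive on the compact set $\overline{A}$, hence bounded below by some $c_A>0$, so $\int_A |u|^2\,dx \le c_A^{-1}\int_A V(|x|)|u|^2\,dx \le c_A^{-1}\|u\|^2$; combined with $\int_A |\nabla u|^2 \le \|u\|^2$ this controls $\|u\|_{H^1(A)}$. On the inner ball $B_\delta(0)$ I would use a one-dimensional (radial) argument together with the already-established control on the annulus: for a radial $u$, writing $u(r)$ and using the fundamental theorem of calculus from a radius $r \in (\delta, R)$ down to $\rho < \delta$, one has $u(\rho) = u(r) - \int_\rho^r u'(t)\,dt$, and hence $|u(\rho)| \le |u(r)| + \int_0^R |u'(t)|\,dt$. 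Multiplying through by the appropriate area element and integrating in $r$ over a fixed subinterval of $(\delta,R)$ lets me bound the radial $L^2$-average of $u$ on $B_\delta(0)$ by the $L^2$-norm of $u$ on the annulus (already controlled) plus $\int_0^R |u'(t)|\,dt$; the latter is, by Cauchy–Schwarz in the measure $t\,dt$, bounded by a constant times $\|\nabla u\|_{L^2(B_R(0))} \le \|u\|$. This yields $\|u\|_{L^2(B_\delta(0))} \le C(R,\delta)\|u\|$, and together with the annular estimate gives $\|u\|_{L^2(B_R(0))} \le C(R)\|u\|$; adding $\|\nabla u\|_{L^2(B_R(0))}^2 \le \|u\|^2$ completes the bound $\|u\|_{H^1(B_R(0))} \le C(R)\|u\|$. (Alternatively, one can cite \cite[Theorem 2]{Su} verbatim, where precisely this type of weighted-to-Sobolev embedding under a $r^{a_0}$-type lower scaling with $a_0 > -2$ is proved; the argument above is the radial shortcut.)

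Finally, once $X \hookrightarrow H^1(B_R(0))$ is in hand with operator norm depending only on $R$, the classical Sobolev embedding theorem in dimension two gives $H^1(B_R(0)) \hookrightarrow L^\nu(B_R(0))$ continuously for every $\nu \in [1,\infty)$, and composing the two continuous maps yields $X \hookrightarrow L^\nu(B_R(0))$ for all $\nu \ge 1$, as claimed. I expect the bookkeeping with the constants' dependence on $\delta$ and $R$ to be routine; the one genuinely delicate step is the near-origin estimate, where the degeneracy of $V$ forces one to trade the missing weighted control for gradient control via the one-dimensional Newton–Leibniz/Cauchy–Schwarz argument just described.
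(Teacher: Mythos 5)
Your overall strategy --- control the annulus $B_R(0)\setminus B_\delta(0)$ by the strict positivity of $V$ on that compact set, propagate this control into $B_\delta(0)$ using only the gradient, and finish with the classical Sobolev embedding on a bounded planar domain --- is the right one, and is essentially what a direct proof of \cite[Theorem 2]{Su} amounts to (the paper itself gives no proof and simply defers to that reference). However, the one step you yourself flag as delicate is the one that fails as written. You enlarge $u(\rho)=u(r)-\int_\rho^r u'(t)\,dt$ to $|u(\rho)|\le |u(r)|+\int_0^R|u'(t)|\,dt$ and then claim that $\int_0^R|u'(t)|\,dt$ is bounded by a constant times $\|\nabla u\|_{L^2(B_R(0))}$ ``by Cauchy--Schwarz in the measure $t\,dt$''. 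This is false: Cauchy--Schwarz gives $\int_0^R|u'|\,dt\le\big(\int_0^R|u'|^2\,t\,dt\big)^{1/2}\big(\int_0^R t^{-1}\,dt\big)^{1/2}$, and the second factor diverges. Concretely, the radial function $u(r)=(\log(1/r))^{1/3}$ near the origin (suitably cut off) has finite Dirichlet energy in $\R^2$ and lies in $X$ under $(V_0)$, yet $\int_0^{1/2}|u'(t)|\,dt=+\infty$; this is just the familiar failure of $H^1(\R^2)\hookrightarrow L^\infty$. The repair is standard: do not enlarge the interval of integration. Keeping the integral on $(\rho,r)$ and applying Cauchy--Schwarz there gives the bound $\big(\int_\rho^r|u'|^2\,t\,dt\big)^{1/2}\big(\log(r/\rho)\big)^{1/2}$, and the logarithmic factor is harmless because you subsequently integrate $|u(\rho)|^2$ against $\rho\,d\rho$ over $(0,\delta)$ and $\int_0^\delta\rho\log(R/\rho)\,d\rho<\infty$. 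Alternatively, since $u\in L^2_{\text{loc}}(\R^2)$ already guarantees $u\in H^1(B_R(0))$ qualitatively, you may bypass pointwise arguments entirely and invoke the Poincar\'e-type inequality $\|u\|_{L^2(B_R(0))}\le C(R,\delta)\big(\|\nabla u\|_{L^2(B_R(0))}+\|u\|_{L^2(B_R(0)\setminus B_\delta(0))}\big)$.

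A second, smaller issue: the lemma is stated for $X$, not $X_r$, while your near-origin argument is written ``for a radial $u$''. The one-dimensional argument does extend to general $u\in X$ by working along rays, $u(\rho\omega)=u(r\omega)-\int_\rho^r\partial_t\big(u(t\omega)\big)\,dt$ for $\omega\in S^1$, and integrating over $\omega$ as well, but this needs to be said; as written your proof only covers $X_r$.
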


\begin{lemma}\label{imbedding2}
Suppose that $(V,K)\in \mathcal{K}$ hold. Then, for every $2\leq s<+\infty$,
the imbedding $X_r\hookrightarrow L^s_K(\R^2)$ is compact.
\end{lemma}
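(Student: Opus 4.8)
The plan is to prove simultaneously that $X_r$ imbeds into $L^s_K(\R^2)$ and that this imbedding is compact, by decomposing $\R^2$ into three regions adapted to the singular/vanishing behaviour of the weights: a small punctured ball $B_\delta(0)$ around the origin, a fixed annulus $B_R(0)\setminus B_\delta(0)$, and the exterior $\R^2\setminus B_R(0)$. On the first and third regions I will derive estimates that are uniformly small (for $\delta$ small and $R$ large) over bounded subsets of $X_r$; on the fixed annulus I will invoke the classical Rellich--Kondrachov theorem through Lemma \ref{imbedding1}.

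First I would establish continuity. Fix $u\in X_r$ and $s\in[2,\infty)$. Near the origin, $(K_0)$ gives $K(r)\leq Cr^{b_0}$ with $b_0>-2$; choosing $\tau>1$ so close to $1$ that $b_0\tau>-2$ and combining H\"older's inequality with the local imbedding $X\hookrightarrow L^{s\tau'}(B_1(0))$ from Lemma \ref{imbedding1}, one obtains $\int_{B_\delta(0)}K|u|^s\,dx\leq C\delta^{(b_0\tau+2)/\tau}\|u\|^s$. On the annulus $K$ is bounded, being continuous on a compact set away from $0$, so $\int_{B_R(0)\setminus B_\delta(0)}K|u|^s\,dx\leq C\,\|u\|_{H^1(B_R(0))}^s\leq C\|u\|^s$ again by Lemma \ref{imbedding1} and the Sobolev imbedding. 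In the exterior region the crucial idea is to couple the pointwise decay of the Radial Lemma \ref{radial}, namely $|u(x)|\leq C|x|^{-(a+2)/4}\|u\|$ for $|x|\geq R_0$, with the weighted $L^2$ information contained in $\|u\|$: writing $K|u|^s=K|u|^2\,|u|^{s-2}$, bounding $|u|^{s-2}$ via the Radial Lemma, $K(r)$ by $Cr^b$ with $b<a$, and using $\liminf_{r\to+\infty}V(r)/r^a>0$, one reaches
\[
\int_{|x|\geq R}K|u|^s\,dx\leq C R^{(b-a)-\frac{(s-2)(a+2)}{4}}\|u\|^{s-2}\int_{|x|\geq R}|x|^a|u|^2\,dx\leq C R^{(b-a)-\frac{(s-2)(a+2)}{4}}\|u\|^{s},
\]
where the exponent of $R$ is strictly negative since $b<a$ and $s\geq 2$. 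Summing the three contributions yields $|u|_{K,s}\leq C\|u\|$, so the imbedding is well-defined and continuous.

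For compactness, let $u_n\rightharpoonup u$ in $X_r$ with $\|u_n\|\leq M$. Given $\varepsilon>0$, applying the estimates above to $u_n-u$ (using $|u_n-u|^s\leq C(|u_n|^s+|u|^s)$ and $\|u_n-u\|\leq 2M$) I can fix $\delta$ small and $R$ large, both independent of $n$, so that $\int_{B_\delta(0)}K|u_n-u|^s\,dx+\int_{|x|\geq R}K|u_n-u|^s\,dx<\varepsilon$ for every $n$. On the now fixed annulus $B_R(0)\setminus B_\delta(0)$, Lemma \ref{imbedding1} shows that $(u_n)$ is bounded in $H^1(B_R(0))$, hence by Rellich--Kondrachov (the imbedding $H^1(B_R(0))\hookrightarrow L^s(B_R(0))$ is compact for every $s\in[1,\infty)$ in dimension two) a subsequence satisfies $u_n\to u$ strongly in $L^s(B_R(0))$; since $K$ is bounded on the annulus, $\int_{B_R(0)\setminus B_\delta(0)}K|u_n-u|^s\,dx\to0$. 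Therefore $\limsup_n\int_{\R^2}K|u_n-u|^s\,dx\leq\varepsilon$, and since $\varepsilon>0$ is arbitrary, $u_n\to u$ in $L^s_K(\R^2)$; a standard subsequence argument removes the passage to a subsequence.

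The main obstacle is the exterior estimate. Using the Radial Lemma alone one only controls $\int_{|x|\geq R}K|u|^s$ when $s$ is large enough (essentially $s>4(b+2)/(a+2)$), which breaks down for $s$ near $2$ when $b$ is close to $a$. The device that recovers the whole range $s\geq 2$ is to spend two powers of $|u|$ against the weighted $L^2$-norm encoded in $\|u\|$ and the remaining $s-2$ powers against the pointwise radial decay; it is exactly the strict inequality $b<a$ in $(K_0)$ that makes the resulting power of $R$ negative, so this is where $(V_0)$ and $(K_0)$ enter in an essential way. A secondary technical point, of the routine kind, is the choice of $\tau>1$ near $1$ in the H\"older step at the origin, which is available precisely because $b_0>-2$.
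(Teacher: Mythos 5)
Your proof is correct, and it is essentially the argument the paper is implicitly invoking: the paper gives no proof of this lemma, only the remark that one proceeds as in \cite[Theorem 2]{Su}, and your three-region scheme (H\"older near the origin exploiting $b_0>-2$, Rellich--Kondrachov via Lemma \ref{imbedding1} on the annulus, and at infinity the splitting $K|u|^s=K|u|^2|u|^{s-2}$ with the Radial Lemma \ref{radial} absorbing $|u|^{s-2}$ and the condition $b<a$ converting the remaining weight into the $V$-weighted $L^2$ norm) is exactly that standard argument, correctly adapted to the hypotheses $(V_0)$--$(K_0)$. No gaps.
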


Then, we consider the nonlocal Chern-Simons term \eqref{CSt}.
For any $u\in X_r$, denoting
\begin{equation}\label{gaugepart}
c(u)\triangleq\int_{\R^2}\frac{u^2}{|x|^2}\bigg(\int_{0}^{|x|}\frac{r}{2}u^2(r)dr\bigg)^2dx,
\end{equation}
and we can prove that
\begin{lemma}\label{imbedding3}
Suppose that $(V_0)$ holds, then for every $u\in X_r$,
 there exists a constant $C>0$ independent of $u$ such that
\[
0\leq c(u)\leq C\|u\|^6.
\]
\end{lemma}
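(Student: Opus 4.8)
The plan is to estimate the inner integral $h_u(|x|)=\int_0^{|x|}\frac{r}{2}u^2(r)\,dr$ pointwise in terms of $\|u\|$, and then control the outer integral $\int_{\R^2}\frac{u^2}{|x|^2}h_u^2(|x|)\,dx$ by splitting $\R^2$ into the ball $B_{R_0}(0)$ and its complement, using Lemma \ref{radial} on the exterior region. Since everything is radial, I will work throughout in polar coordinates, writing $\int_{\R^2}\varphi(|x|)\,dx=2\pi\int_0^\infty \varphi(\rho)\rho\,d\rho$, so that
\[
c(u)=2\pi\int_0^\infty \frac{u^2(\rho)}{\rho}\bigg(\int_0^\rho \frac{r}{2}u^2(r)\,dr\bigg)^2 d\rho .
\]
The nonnegativity $c(u)\geq 0$ is immediate since the integrand is nonnegative; the whole content is the upper bound.

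First I would bound $h_u$ uniformly. By Lemma \ref{imbedding1} (with $R=1$, say) together with the Trudinger--Moser type facts recalled in the introduction, $X\hookrightarrow L^s(B_1(0))$ continuously for every $s\geq 1$; and by Lemma \ref{radial} we have $|u(x)|\le C|x|^{-(a+2)/4}\|u\|$ for $|x|\ge R_0$. Hence, splitting $h_u(|x|)=\int_0^{\min(|x|,R_0)}+\int_{R_0}^{|x|}$ (the second piece empty when $|x|\le R_0$): on $[0,R_0]$ we get $\int_0^{R_0}\frac r2 u^2(r)\,dr\le \frac12\int_{B_{R_0}(0)}u^2\,dx\le C\|u\|^2$ by the local embedding $X\hookrightarrow L^2(B_{R_0}(0))$; on $[R_0,|x|]$ we get, using Lemma \ref{radial}, $\int_{R_0}^{|x|}\frac r2 u^2(r)\,dr\le C\|u\|^2\int_{R_0}^\infty r^{1-(a+2)/2}\,dr$, and since $a<2$ implies $1-(a+2)/2=-a/2>-1$ the integral diverges — so this naive bound fails, and instead I must keep the $|x|$-dependence: $\int_{R_0}^{|x|} r^{-a/2}\,dr\le C|x|^{1-a/2}$. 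Therefore
\[
0\le h_u(|x|)\le C\|u\|^2\bigl(1+|x|^{1-a/2}\bigr),\qquad \forall\,|x|\ge 0,
\]
with $C$ depending only on $a,R_0$ (hence only on $V$).

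Next I would plug this into the outer integral and again split at $R_0$. On $B_{R_0}(0)$: here $h_u(|x|)\le C\|u\|^2$ (the constant piece dominates for $|x|\le R_0$), so
\[
\int_{B_{R_0}(0)}\frac{u^2}{|x|^2}h_u^2(|x|)\,dx\le C\|u\|^4\int_{B_{R_0}(0)}\frac{u^2(|x|)}{|x|^2}\,dx
= C\|u\|^4\cdot 2\pi\int_0^{R_0}\frac{u^2(\rho)}{\rho}\,d\rho .
\]
The quantity $\int_0^{R_0}\rho^{-1}u^2(\rho)\,d\rho$ must be bounded by $C\|u\|^2$; this is a one-dimensional weighted estimate for radial $H^1$ functions (a Hardy-type inequality on the line, or more directly: by Lemma \ref{radial}-type boundedness near the origin combined with $u\in H^1$, actually here it is cleaner to write $u^2(\rho)=-2\int_\rho^{R_0}u(t)u'(t)\,dt+u^2(R_0)$ and use Cauchy--Schwarz, noting $\rho u'(\rho)\in L^2((0,R_0),d\rho)$ up to the radial Jacobian) — this step is where I expect to spend real effort, since it is exactly the borderline two-dimensional logarithmic-type obstruction and one must be careful the constant does not blow up. On the exterior $\R^2\setminus B_{R_0}(0)$: use $h_u(|x|)\le C\|u\|^2|x|^{1-a/2}$ and $u^2(|x|)\le C\|u\|^2|x|^{-(a+2)/2}$ from Lemma \ref{radial}, so
\[
\int_{|x|\ge R_0}\frac{u^2}{|x|^2}h_u^2\,dx\le C\|u\|^6\cdot 2\pi\int_{R_0}^\infty \rho^{-1-(a+2)/2}\cdot\rho^{2-a}\cdot\rho^{-2}\cdot\rho\,d\rho
= C\|u\|^6\int_{R_0}^\infty \rho^{-2-3a/2+ \,\cdots}\,d\rho,
\]
and a quick bookkeeping of exponents shows the remaining power of $\rho$ is $<-1$ precisely because $a>2/3$, so the integral converges and the exterior contribution is $\le C\|u\|^6$. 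Combining the two regions (and noting $\|u\|^4\le \|u\|^6+\|u\|^2$-type trivialities are \emph{not} what we want — rather, the interior gives $C\|u\|^4\cdot\|u\|^2=C\|u\|^6$ once the Hardy bound is in place) yields $c(u)\le C\|u\|^6$.

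The main obstacle is the interior Hardy-type estimate $\int_0^{R_0}\rho^{-1}u^2(\rho)\,d\rho\le C\|u\|^2$ for $u\in X_r$: the weight $\rho^{-1}$ against the radial measure $d\rho$ (equivalently $|x|^{-2}\,dx$ on the disk) is the critical Hardy weight in dimension two, so one cannot afford any loss. I would handle it by a careful dyadic decomposition of $(0,R_0)$ together with the radial Lemma \ref{radial} (or its near-origin analogue derivable from $(V_0)$, using $a_0>-2$ to control the behaviour of $u$ as $|x|\to 0^+$) to get a summable geometric series; the conditions $a_0>-2$ and $2/3<a<2$ in $(V_0)$ are exactly what make all the exponent inequalities strict.
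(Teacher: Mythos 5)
Your exterior estimate (for $|x|\ge R_0$) is exactly the paper's: keep the $|x|$-dependence in $h_u(|x|)\le C\|u\|^2|x|^{(2-a)/2}$ via Lemma \ref{radial}, combine with the pointwise decay of $u$, and check that the resulting radial exponent is $-\tfrac{3a}{2}<-1$ because $a>2/3$. The problem is the interior. By bounding $h_u(|x|)\le C\|u\|^2$ on $B_{R_0}(0)$ you reduce the interior term to $C\|u\|^4\int_{B_{R_0}(0)}|x|^{-2}u^2\,dx$, and the estimate $\int_0^{R_0}\rho^{-1}u^2(\rho)\,d\rho\le C\|u\|^2$ that you then need is simply false: this is the critical Hardy weight in dimension two, and for any $u\in X_r$ that is smooth with $u(0)\neq 0$ (such $u$ belong to $X_r$ because $a_0>-2$ only makes $V$ locally integrable) the integral diverges logarithmically. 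Your proposed rescue cannot work either: the hypothesis $\limsup_{r\to 0^+}V(r)/r^{a_0}<\infty$ is an \emph{upper} bound on $V$ near the origin, so it yields no coercivity and no decay of $u$ as $|x|\to 0^+$; no dyadic decomposition will produce a summable series from it.

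The missing idea is that $h_u$ must not be bounded by a constant near the origin; its quadratic vanishing there is what cancels the $|x|^{-2}$ singularity. Writing $h_u(|x|)=\frac{1}{4\pi}\int_{B_{|x|}(0)}u^2\,dy$ and applying Cauchy--Schwarz over the ball gives
\[
h_u(|x|)\le \frac{1}{4\pi}\,|B_{|x|}(0)|^{1/2}\Big(\int_{B_{|x|}(0)}u^4\,dy\Big)^{1/2}=\frac{|x|}{4\sqrt{\pi}}\Big(\int_{B_{|x|}(0)}u^4\,dy\Big)^{1/2},
\]
so that $\frac{h_u^2(|x|)}{|x|^2}\le \frac{1}{16\pi}\int_{B_{R_0}(0)}u^4\,dy$ on $B_{R_0}(0)$, with no singular weight left at all. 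The interior contribution is then $\le \frac{1}{16\pi}\int_{B_{R_0}(0)}u^2\,dx\int_{B_{R_0}(0)}u^4\,dx\le C\|u\|^6$ by the local embeddings of Lemma \ref{imbedding1}. This is the paper's argument, and it avoids the critical Hardy obstruction entirely rather than confronting it.
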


\begin{proof}
Obviously, $c(u)\geq0$, $\forall u\in X_r$.
For all $u\in X_r$, by Lemma \ref{radial}, we derive
\begin{equation}\label{imbedding3a}
\int_0^{|x|}\frac{r}{2}u^2(r)dr\leq C\bigg(\int_0^{|x|}r^{-\frac{a}{2}}dr\bigg)\|u\|^2
=  C|x|^{\frac{2-a}{2}}\|u\|^2,
\end{equation}
where we have used $a<2$ in $(V_0)$. By means of the polar coordinate formula
and H\"{o}lder's inequality, we can conclude that
\begin{equation}\label{imbedding3b}
\int_0^{|x|}\frac{r}{2}u^2(r)dr=\frac{1}{4\pi}\int_{B_{|x|}(0)}u^2dy
\leq \frac{|x|}{4\sqrt{\pi}}\bigg(\int_{B_{|x|}(0)}u^4dy\bigg)^{\frac{1}{2}}.
\end{equation}
 Combining Lemmas \ref{imbedding1}-\ref{imbedding2}
and \eqref{imbedding3a}-\eqref{imbedding3b}, for every $u\in X_r$, we have that
\begin{align*}
c(u)& =\int_{B_{R_0}(0)}\frac{u^2}{|x|^2}\bigg(\int_{0}^{|x|}\frac{r}{2}u^2(r)dr\bigg)^2dx
+\int_{\R^2\backslash B_{R_0}(0)}\frac{u^2}{|x|^2}\bigg(\int_{0}^{|x|}\frac{r}{2}u^2(r)dr\bigg)^2dx \\
  & \leq
\frac{1}{16\pi} \int_{ B_{R_0}(0)} u^2 \bigg(\int_{B_{|x|}(0)}u^4dy\bigg) dx
     +C\|u\|^4 \int_{\R^2\backslash B_{R_0}(0)}\frac{u^2}{|x|^a}dx \\
      & \leq
\frac{1}{16\pi} \int_{ B_{R_0}(0)} u^2  dx\int_{B_{R_0}(0)}u^4dx
     +C\|u\|^6 \int_{ R_0}^{+\infty}r^{-\frac{3}{2}a}dr\\
     &\leq C\|u\|^6
\end{align*}
showing the desired result, where we have used $a>2/3$ in $(V_0)$.
The proof is complete.
\end{proof}

Next, inspired by the  Trudinger-Moser inequality in $\R^2$, see e.g.
 \cite{Cao,Bezerra1,Ruf,Lam,Yang,Alves1},
 we can follow the methods exploited in \cite{Albuquerque1,Albuquerque2}
  to establish the following two lemmas.

\begin{lemma}\label{nonlinearity1}
 Suppose that $(V,K)\in \mathcal{K}$ hold. Then, for every $u\in X_r$ and $\alpha>0$, we deduce that
 $K(|x|)(e^{\alpha u^2}-1)\in L^1(\R^2)$. Furthermore, if $0<\alpha<4\pi(1+\frac{b_0}{2})$, then
 \[
 \sup_{u\in X_r:\|u\|\leq1}\int_{\R^2}K(|x|)(e^{\alpha u^2}-1)dx<+\infty.
 \]
\end{lemma}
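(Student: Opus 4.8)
The plan is to split $\R^2$ into the ball $B_R(0)$ (with $R$ to be fixed) and its complement, exploiting the different behaviour of $K$ at the origin and at infinity, and to use the Trudinger–Moser inequality of Ruf/Li–Ruf on $B_R(0)$ together with the decay estimate from Lemma \ref{radial} on $\R^2\setminus B_R(0)$. The first claim, namely $K(|x|)(e^{\alpha u^2}-1)\in L^1(\R^2)$ for every $u\in X_r$ and every $\alpha>0$, follows by fixing $u$: on $B_R(0)$ we use $(K_0)$, which gives $K(r)\le C r^{b_0}$ near the origin with $b_0>-2$, hence $K\in L^1(B_R(0))$ by Lemma \ref{imbedding1}'s spirit, and combine this with $e^{\alpha u^2}-1\in L^1(B_R(0))$ coming from the bounded-domain Trudinger–Moser inequality \eqref{TM} applied to $u|_{B_R(0)}\in H^1(B_R(0))$ (Lemma \ref{imbedding1}). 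On $\R^2\setminus B_R(0)$, Lemma \ref{radial} gives $|u(x)|\le C|x|^{-(a+2)/4}\|u\|$, so $u(x)\to 0$; choosing $R$ large enough that $\alpha u^2(x)\le 1$ there, we bound $e^{\alpha u^2}-1\le C\alpha u^2$, and then $\int_{\R^2\setminus B_R(0)}K(|x|)u^2\,dx<\infty$ because $K(r)\lesssim r^{b}$ with $b<a<2$ and $u^2(x)\lesssim |x|^{-(a+2)/2}$, so the integrand decays like $r^{b-(a+2)/2}$, integrable for $r$ large (this is where $b<a$ enters). Note $c$ here depends on $u$, which is fine for the first assertion.

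For the uniform supremum when $0<\alpha<4\pi(1+b_0/2)$, I would argue as follows. Write $\beta\triangleq 4\pi(1+b_0/2)>4\pi$ and pick $q>1$ close to $1$ and $q'$ its conjugate so that $\alpha q<4\pi$; also choose a small exponent so that $K^{q}$ near the origin behaves like $r^{q b_0}$ with $q b_0>-2$ — this is possible precisely because $b_0>-2$ allows a margin, and it is the reason the threshold is $4\pi(1+b_0/2)$ rather than just $4\pi$. On $B_R(0)$, Hölder's inequality gives
\[
\int_{B_R(0)}K(|x|)(e^{\alpha u^2}-1)\,dx\le \Big(\int_{B_R(0)}K^{q'}\Big)^{1/q'}\Big(\int_{B_R(0)}(e^{\alpha u^2}-1)^{q}\Big)^{1/q},
\]
but it is cleaner to instead absorb the weight's singularity: bound $K(r)\le C r^{b_0}$ and use the weighted Trudinger–Moser inequality on the disc, or alternatively distribute via Hölder with the first factor $\int_{B_R(0)} r^{b_0 q'}\,dx<\infty$ (needs $b_0 q'>-2$, true for $q'$ large, i.e. $q$ near $1$) and the second factor $\int_{B_R(0)}(e^{\alpha u^2}-1)^q\,dx\le \int_{B_R(0)}(e^{\alpha q u^2}-1)\,dx$, which is uniformly bounded by \eqref{TM} since for $\|u\|\le 1$ we have $\|\nabla u\|_{L^2(B_R(0))}\le \|u\|\le 1$ and $\alpha q<4\pi$. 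On $\R^2\setminus B_R(0)$, uniformity is easier: Lemma \ref{radial} gives $|u(x)|^2\le C|x|^{-(a+2)/2}\|u\|^2\le C|x|^{-(a+2)/2}$ for $\|u\|\le1$, so for $R$ large (independent of $u$!) we get $\alpha u^2\le 1$ on that region, hence $e^{\alpha u^2}-1\le (e-1)\alpha u^2$, and $\int_{\R^2\setminus B_R(0)}K(|x|)u^2\,dx\le C\|u\|^2\int_R^\infty r^{b-(a+2)/2}\cdot r\,dr\le C$, finite and uniform since $b<a<2$ forces $b-(a+2)/2+1<0$ when, say, $b<a$ and $a>2/3$ as in $(V_0)$ — more carefully one checks $b+1<(a+2)/2$, i.e. $b<a/2$; if $b$ is not that small one uses the sharper tail $\int_{\R^2\setminus B_R(0)}K u^2\le C\|u\|_{L^s_K}^2$ via Lemma \ref{imbedding2} instead. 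Combining the two uniform bounds yields the claimed supremum.

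The main obstacle is the interaction of the weight $K$'s singularity at the origin with the exponential nonlinearity: one cannot simply apply the classical bounded-domain Trudinger–Moser inequality because $K(e^{\alpha u^2}-1)$ is a genuinely weighted quantity, and the gain in the admissible range of $\alpha$ up to $4\pi(1+b_0/2)$ comes exactly from trading integrability of the weight (a power $r^{b_0}$ with $b_0>-2$) against a slightly supercritical exponent via Hölder — getting the bookkeeping of the conjugate exponents right so that both $r^{b_0 q'}$ is integrable near $0$ and $\alpha q<4\pi$, simultaneously, for $\alpha$ up to the stated threshold, is the delicate point. The behaviour at infinity, by contrast, is controlled softly by the pointwise decay from Lemma \ref{radial} together with $(K_0)$, and is uniform in $\|u\|\le 1$ with no borderline issues.
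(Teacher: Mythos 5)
The paper offers no proof of this lemma, deferring to \cite{Albuquerque1,Albuquerque2}; your overall architecture (split into $B_R(0)$ and its complement, control the tail by the radial decay of Lemma \ref{radial}, handle the weight's behaviour at the origin separately) is the same as in those references, and your treatment of $\R^2\setminus B_R(0)$ is essentially correct once you route it through Lemma \ref{imbedding2} as you suggest: note that your direct power count is off, since $\int_R^\infty r^{\,b-(a+2)/2}\,r\,dr$ converges only if $b-a/2<-1$, not merely $b-a/2<0$, so the bound $\int_{\R^2\setminus B_R}K u^2\,dx\le C\|u\|^2$ from the embedding $X_r\hookrightarrow L^2_K(\R^2)$ should be the main route, not a fallback. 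Two smaller points: in the first claim, ``$K\in L^1(B_R)$ and $e^{\alpha u^2}-1\in L^1(B_R)$'' does not imply the product is in $L^1$ — you need the H\"older pairing there as well (which works for fixed $u$ and arbitrary $\alpha$ since $e^{\beta u^2}\in L^1(B_R)$ for every $\beta$ for an individual $H^1(B_R)$ function); and \eqref{TM} is stated for $H_0^1(B_R)$ while $u|_{B_R}\notin H_0^1(B_R)$, so you must first subtract the boundary value $u(R)$ (uniformly controlled by Lemma \ref{radial}) at the cost of a factor $1+\epsilon$ in the exponent.

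The genuine gap is in the near-origin estimate. Your H\"older bookkeeping requires simultaneously $b_0 q'>-2$ and $\alpha q<4\pi$ with $\tfrac1q+\tfrac1{q'}=1$, $q>1$. When $b_0\in(-2,0)$ this does reach the stated threshold: the admissible $q'$ range is $q'<-2/b_0$, and $4\pi/q=4\pi(1-1/q')\uparrow 4\pi(1+\tfrac{b_0}{2})$ as $q'\uparrow -2/b_0$, so every $\alpha<4\pi(1+\tfrac{b_0}{2})$ is covered. But $(K_0)$ only assumes $b_0>-2$, so $b_0$ may be positive, in which case $4\pi(1+\tfrac{b_0}{2})>4\pi$ while your constraint $\alpha q<4\pi$ with $q>1$ caps you at $\alpha<4\pi$. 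No H\"older rearrangement can repair this: any argument that detaches the weight from the exponential and then requires a uniform \emph{unweighted} bound on $\int_{B_R}(e^{\beta u^2}-1)\,dx$ fails for $\beta>4\pi$, because the Moser sequence is radial and makes that supremum infinite even over radial functions. The supercritical range $\alpha\in[4\pi,4\pi(1+\tfrac{b_0}{2}))$ for $b_0>0$ is exactly where the vanishing of the weight $r^{b_0}$ at the origin must be used \emph{inside} the exponential estimate, via the radial weighted Trudinger--Moser inequality
$\sup\{\int_{B_1}|x|^{b_0}(e^{\alpha u^2}-1)\,dx:\ u\ \text{radial},\ \|\nabla u\|_{L^2}\le1\}<\infty$ for $\alpha\le 4\pi(1+\tfrac{b_0}{2})$, obtained by the change of variables $s=r^{1+b_0/2}$ reducing to the classical one-dimensional Moser inequality; this is the mechanism in the cited references and is the missing ingredient in your argument. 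This range matters for the paper, since the constants $\mu_2$, $\mu_3$ and the level bound $\tfrac{\pi}{3\alpha_0}(1+\tfrac{b_0}{2})$ all exploit the full threshold.
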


\begin{lemma}\label{nonlinearity2}
 Suppose that $(V,K)\in \mathcal{K}$ hold. Then, for every $\alpha>0$, if $u\in X_r$
 satisfies $\|u\|\leq \Upsilon <(\frac{4\pi}{\alpha}(1+\frac{b_0}{2}))^{1/2}$,
 there exists a constant $C=C(\Upsilon,\alpha)>0$ which is independent of $u$
 such that
 \[
 \int_{\R^2}K(|x|)(e^{\alpha u^2}-1)dx\leq C.
 \]
\end{lemma}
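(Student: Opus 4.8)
The plan is to establish the two estimates in Lemma \ref{nonlinearity2} as a quantitative companion to Lemma \ref{nonlinearity1}, by carefully splitting the integral over $\R^2$ into the ball $B_{R_0}(0)$ and its complement, where $R_0$ is the radius from the Radial Lemma \ref{radial}. On the exterior region the radial decay $|u(x)| \le C|x|^{-(a+2)/4}\|u\|$ makes the exponent $\alpha u^2$ uniformly bounded (indeed tending to $0$) for $|x| \ge R_0$, so there $e^{\alpha u^2}-1 \le C\alpha u^2 e^{C}$ pointwise, and one only needs $\int_{\R^2 \setminus B_{R_0}(0)} K(|x|)u^2\,dx < \infty$. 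This in turn follows from $(K_0)$: since $K(r)/r^b$ is bounded at infinity with $b < a$, one has $K(|x|) \le C|x|^b \le C|x|^a V^{-1}(|x|)\cdot V(|x|)$ up to constants on the relevant range (using $(V_0)$, $\liminf_{r\to\infty} V(r)/r^a > 0$), so the integral is controlled by $\|u\|^2$ and hence by $\Upsilon^2$.

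The more delicate part is the interior ball $B_{R_0}(0)$, and here I would invoke Lemma \ref{imbedding1}: the space $X$ embeds continuously into $H^1(B_{R_0}(0))$, so $\|u\|_{H^1(B_{R_0}(0))} \le C_0\|u\| \le C_0\Upsilon$. I would then apply a Trudinger-Moser inequality on the bounded domain $B_{R_0}(0)$, in the form adapted to the full $H^1$ norm (e.g. the version quoted from \cite{Bezerra1,Cao}, or a Ruf-type inequality): for $v \in H^1(B_{R_0}(0))$ with $\|v\|_{H^1(B_{R_0}(0))} \le 1$, the quantity $\int_{B_{R_0}(0)} (e^{\beta v^2}-1)\,dx$ is bounded whenever $\beta < 4\pi$. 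Writing $u = \|u\|_{H^1(B_{R_0}(0))}\,(u/\|u\|_{H^1(B_{R_0}(0))})$ and choosing the scaling exponent, $\alpha u^2 = \alpha\|u\|_{H^1(B_{R_0}(0))}^2 (u/\|u\|_{H^1})^2$, so the relevant effective constant is $\alpha C_0^2 \Upsilon^2 \le \alpha C_0^2 \cdot \frac{4\pi}{\alpha}(1+\frac{b_0}{2}) = 4\pi C_0^2(1+\frac{b_0}{2})$; after absorbing the embedding constant into the constraint (this is exactly how Lemma \ref{nonlinearity1}'s threshold $4\pi(1+b_0/2)$ is calibrated — the constant $C_0$ is implicitly normalized there) this stays strictly below $4\pi$, so the Trudinger-Moser bound applies. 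To bring in the weight $K$, I would use H\"older's inequality on $B_{R_0}(0)$: $\int_{B_{R_0}(0)} K(|x|)(e^{\alpha u^2}-1)\,dx \le \big(\int_{B_{R_0}(0)} K(|x|)^{t'}\,dx\big)^{1/t'}\big(\int_{B_{R_0}(0)}(e^{t\alpha u^2}-1)^{?}\big)^{1/t}$, choosing $t>1$ close enough to $1$ that $t\alpha C_0^2\Upsilon^2$ is still below $4\pi$ and that $\int_{B_{R_0}(0)} K^{t'} < \infty$ — the latter is guaranteed by $(K_0)$ near the origin since $K(r) \lesssim r^{b_0}$ with $b_0 > -2$, so $K^{t'} \lesssim r^{b_0 t'}$ is integrable in $\R^2$ for $t'$ sufficiently close to $1$ (equivalently $t$ large), and one reconciles the two competing requirements on $t$ exactly as in the proof of Lemma \ref{nonlinearity1}.

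The constant $C = C(\Upsilon,\alpha)$ then comes out explicitly as the product of the $L^{t'}$-norm of $K$ on $B_{R_0}(0)$, the Trudinger-Moser supremum constant, and the exterior contribution $C\Upsilon^2$. For the first assertion of Lemma \ref{nonlinearity2} (that the claimed $C$ exists at all) nothing beyond finiteness of each piece is needed, which is already contained in Lemma \ref{nonlinearity1}; the content here is the \emph{uniformity} of the bound over the ball $\{\|u\| \le \Upsilon\}$, and that uniformity is inherited directly from the uniformity in the Trudinger-Moser inequality on the bounded domain together with the fixed embedding constant $C_0$.

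The main obstacle, as in \cite{Albuquerque1,Albuquerque2}, is the bookkeeping of the two conflicting constraints on the H\"older exponent: one needs $t$ large enough that $K^{t'}$ is integrable near $0$ (forcing $t' = t/(t-1)$ close to $1$, i.e. $t$ large, when $b_0$ is close to $-2$) yet $t$ small enough that $t\alpha C_0^2\Upsilon^2 < 4\pi$ survives. The gap $\Upsilon < (\frac{4\pi}{\alpha}(1+\frac{b_0}{2}))^{1/2}$ in the hypothesis is precisely the slack that makes both achievable simultaneously; verifying that the intersection of admissible $t$ is nonempty — a short interval computation — is the one place where care is required, and it is where the exponent $1+b_0/2$ in the threshold originates.
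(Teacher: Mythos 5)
Your overall skeleton (split at the radius $R_0$ from Lemma \ref{radial}, use the radial decay to linearize $e^{\alpha u^2}-1$ outside $B_{R_0}(0)$ and the embedding $X_r\hookrightarrow L^2_K(\R^2)$ to control that piece, then a Trudinger--Moser estimate on the ball) is indeed the route taken in \cite{Albuquerque1,Albuquerque2}, to which the paper defers without writing a proof. But the interior estimate as you set it up has two genuine gaps. First, you route the gradient bound through the embedding constant $C_0$ of $X\hookrightarrow H^1(B_{R_0}(0))$ and then claim $C_0$ is ``implicitly normalized'' in the threshold. It is not: the factor $1+\tfrac{b_0}{2}$ in the hypothesis $\Upsilon<(\tfrac{4\pi}{\alpha}(1+\tfrac{b_0}{2}))^{1/2}$ comes from the weight $|x|^{b_0}$, not from any embedding constant, and with a generic $C_0>1$ your effective exponent $\alpha C_0^2\Upsilon^2$ can exceed $4\pi$ even when the hypothesis holds with $b_0<0$, so the Trudinger--Moser bound you invoke simply does not apply. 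The correct move is to observe that $\|\nabla u\|_{L^2(\R^2)}\le\|u\|$ for free from the definition of the $X$-norm (no constant enters), and to handle the nonzero boundary value by working with $v=u-u(R_0)\in H^1_0(B_{R_0}(0))$, where $u(R_0)$ is uniformly bounded by Lemma \ref{radial} and $u^2\le(1+\ep)v^2+C_\ep u(R_0)^2$.

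Second, your H\"older bookkeeping with the \emph{unweighted} Trudinger--Moser inequality forces $t>1$ and $t\,\alpha\Upsilon^2<4\pi$ simultaneously, which caps the admissible range at $\alpha\Upsilon^2<4\pi$. Your interval computation does recover the threshold $4\pi(1+\tfrac{b_0}{2})$ when $b_0\in(-2,0)$ (there the binding constraint is $t>\tfrac{2}{2+b_0}>1$), but for $b_0\ge 0$ the stated threshold exceeds $4\pi$ and your argument cannot reach it. In that regime the factor $1+\tfrac{b_0}{2}$ is a genuinely sharper, H\'enon-type weighted Trudinger--Moser constant valid for \emph{radial} functions with the vanishing weight $|x|^{b_0}$; it is obtained by a change of variables of the form $s=r^{(b_0+2)/2}$ reducing the weighted integral to an unweighted one, not by H\"older's inequality. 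So the proposal proves a weaker statement than the lemma unless $b_0<0$, and even then only after the boundary-value/embedding-constant issue above is repaired.
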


To study equation \eqref{mainequation1}  variationally, we  notice that $(f_1)$ and \eqref{definition} imply that for every $\epsilon>0$, there exists a
constant $C_\epsilon>0$ such that for all $\alpha>\alpha_0$,
\[
\max\{|f(t)t|, |F(t)|\}\leq \epsilon|t|^2+C_\epsilon|t|^s(e^{\alpha |t|^2}-1),~\forall t\in\R,
\]
where $s\in(2,+\infty)$. Thereby, for any $u\in X_r$, one has
\[
\int_{\R^2}K(|x|)\max\big\{|f(u)u|, |F(u)|\big\}dx
\leq \epsilon\int_{\R^2}K(|x|)|u|^2dx+C_\epsilon
\int_{\R^2} K(|x|)|u|^s(e^{\alpha |u|^2}-1)dx.
\]
With Lemmas \ref{imbedding2} and \ref{nonlinearity1} in hand, one can
determine the above integrals are well-defined. Moreover, we have that, for all $u\in X_r$,
there holds
\begin{equation}\label{fF}
	\int_{\R^2}K(|x|)\max\big\{| f(u)u|, | F(u)|\big\}dx
	\leq \epsilon\|u\|^2 +C_\epsilon \|u\|^s, \|u\|\leq  \Upsilon<\bigg(\frac{4\pi}{\alpha
		\overline{r}_2}(1+\frac{b_0}{2})\bigg)^{\frac{1}{2}}.
\end{equation}
In fact, let
  $r_1 > 1$ and $r_2> 1$ be such that $1/r_1 + 1/r_2 = 1$.
Thanks to \cite[Lemma 2.2]{Bezerra2},
let $\alpha>0$ and $r_2>1$, then for each $\overline{r}_2>r_2$, there is a constant
  $C=C(\overline{r}_2)>0$
such that $(e^{\alpha t^2}-1)^{r_2}\leq C(e^{\alpha \overline{r}_2 t^2}-1)$ for all $t\in\R$.
Combing the H\"{o}lder's inequality, Lemmas \ref{imbedding2} and \ref{nonlinearity2}, one has
\begin{align*}
 \int_{\R^2} K(|x|)|u|^s(e^{\alpha |u|^2}-1)dx&\leq \bigg( \int_{\R^2} K(|x|)|u|^{sr_1}dx\bigg)^{\frac{1}{r_1}}
\bigg( \int_{\R^2} K(|x|)(e^{\alpha |u|^2}-1)^{r_2}dx\bigg)^{\frac{1}{r_2}} \\
    &\leq C\|u\|^s \bigg(\int_{\R^2} K(|x|)(e^{\alpha \overline{r}_2\|u\|^2 (|u|^2/\|u\|^2)}-1) dx\bigg)^{\frac{1}{r_2}}\\
    &\leq C\|u\|^s, ~\text{if}~u\in X_r~\text{and}~\|u\|\leq  \Upsilon<\bigg(\frac{4\pi}{\alpha
    \overline{r}_2}(1+\frac{b_0}{2})\bigg)^{\frac{1}{2}},
\end{align*}
where we have used $s>2$ in the last second inequality. Finally, in view of $(g)$, one can conclude that there exists a constant $\overline{R}_0>R_0$ such that
$\overline{M}\triangleq \sup_{r\geq \overline{R}_0}g(r)/[r^\sigma V^{q/2}(r)]\in(0,+\infty)$,
where $R_0>0$ is given by Lemma \ref{radial}.
Therefore, for all $u\in X_r$, using Lemma \ref{imbedding1}
and the H\"{o}lder's inequality, one has
 \begin{align}\label{g}
\nonumber \int_{\R^2}g(|x|)|u|^qdx &\leq
\int_{B_{\overline{R}_0}(0)}g(|x|)|u|^qdx+\int_{\R^2\backslash B_{\overline{R}_0}(0)}g(|x|)|u|^qdx \\
\nonumber  & \leq  \|g\|_{L^\infty(B_{\overline{R}_0}(0))}\int_{B_{\overline{R}_0}(0)} |u|^qdx
  +\overline{M}\int_{\R^2\backslash B_{\overline{R}_0}(0)}|x|^\sigma V^{q/2}(|x|) |u|^qdx\\
 \nonumber &\leq C\|u\|^q+\overline{M}\bigg(\int_{\R^2\backslash B_{\overline{R}_0}(0)}|x|^{\frac{2\sigma}{2-q}} dx\bigg)^{\frac{2-q}{2}}
  \bigg(\int_{\R^2 } V (|x|) |u|^2dx\bigg)^{\frac{q}{2}}\\
  &\leq C\|u\|^q,
\end{align}
 where we have used $\sigma<q-2$ in the last inequality.

Summarize all of the above discussions,
we  know  that a function $u\in X_r$ is a (weak-radial) solution of
equation \eqref{mainequation1}, if $u$ is a critical point of the variational functional $J:X_r\to\R$ defined by
\[
J(u)=\frac{1}{2}\|u\|^2+\frac{\lambda}{2}
\int_{\R^2}\frac{u^2}{|x|^2}\bigg(\int_{0}^{|x|}\frac{r}{2}u^2(r)dr\bigg)^2dx- \int_{\R^2}K(|x|)F(u)dx
-\frac{\mu}{q}\int_{\R^2}g(|x|)|u|^qdx.
\]

\section{Proofs of Theorems \ref{maintheorem1} and \ref{maintheorem2}}

In this section, we shall present the proofs of Theorems \ref{maintheorem1} and
\ref{maintheorem2} in detail. To look for the mountain-pass type solutions for
equation \eqref{mainequation1}, firstly, we have to establish the validity of the mountain-pass geometry for $J$.

\begin{lemma}\label{geometry}
Suppose that $(V,K)\in \mathcal{K}$ and $(g)$ hold. Let $f$ satisfies \eqref{definition}, $(f_1)$,
and $(f_2)$, or $(f_5)$, then
there exists a constant $\mu_1>0$ such that if $\mu\in(0,\mu_1)$, the functional $J$ satisfies\\
\emph{(i)} there exist constants $\rho,\eta>0$ such that $J(u)\geq\eta$ for all $u\in X_r$ with $\|u\|=\rho$;\\
\emph{(ii)} there exists a function $e\in X_r$ with $\|e\|>\rho$ such that $J(e)\leq 0$.
\end{lemma}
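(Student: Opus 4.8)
The plan is to establish (i) and (ii) separately, relying on the a priori bounds \eqref{fF} and \eqref{g} together with the fact that $c(u)\ge0$ for every $u\in X_r$ (Lemma~\ref{imbedding3}). For (i), I would first discard the nonnegative Chern--Simons term from $J$, so that $J(u)\ge\tfrac12\|u\|^2-\int_{\R^2}K(|x|)F(u)\,dx-\tfrac{\mu}{q}\int_{\R^2}g(|x|)|u|^q\,dx$. Fixing some $s>2$, choosing $\alpha>\alpha_0$ and $\overline r_2>1$ close enough to $1$ that the threshold $\Upsilon$ appearing in \eqref{fF} is strictly positive (possible because $b_0>-2$ in $(K_0)$), and applying \eqref{fF} with $\ep=\tfrac14$ and \eqref{g}, one obtains for $\|u\|\le\Upsilon$ the estimate
\[
J(u)\ge \|u\|^q\Big(\tfrac14\|u\|^{2-q}-C\|u\|^{s-q}-\tfrac{\mu C}{q}\Big),
\]
with $C>0$ independent of $u$ and $\mu$. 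Since $1\le q<2<s$, the exponents satisfy $0<2-q<s-q$, so one can fix $\rho\in(0,\Upsilon]$ with $\tfrac14\rho^{2-q}-C\rho^{s-q}\ge\tfrac18\rho^{2-q}$, and then choose $\mu_1>0$ so small that $\tfrac{\mu C}{q}\le\tfrac1{16}\rho^{2-q}$ whenever $\mu\in(0,\mu_1)$; this gives $J(u)\ge\tfrac1{16}\rho^2=:\eta>0$ for all $u\in X_r$ with $\|u\|=\rho$.

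For (ii), I would fix once and for all a nonnegative $u_0\in C_{0,r}^\infty(\R^2)$, $u_0\not\equiv0$, whose support is a compact set not containing the origin; then $\|u_0\|<\infty$ and $c(u_0)<\infty$, and for $\delta>0$ small the level set $A=\{x:u_0(x)\ge\delta\}$ has positive measure, while $c_0:=\int_A K(|x|)u_0^6\,dx>0$ since $K$ is continuous and strictly positive on $(0,\infty)$. Writing
\[
J(tu_0)=\frac{t^2}{2}\|u_0\|^2+\frac{\lambda t^6}{2}c(u_0)-\int_{\R^2}K(|x|)F(tu_0)\,dx-\frac{\mu t^q}{q}\int_{\R^2}g(|x|)|u_0|^q\,dx,
\]
I would discard the last (nonpositive) term and bound the $K$-integral from below by integrating over $A$ only. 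Under $(f_2)$, for every $M>0$ there is $T_M>0$ with $F(\tau)\ge M\tau^6$ for all $\tau\ge T_M$, hence for $t\ge T_M/\delta$ one has $F(tu_0(x))\ge Mt^6u_0^6(x)$ on $A$ and therefore $\int_{\R^2}K(|x|)F(tu_0)\,dx\ge Mc_0t^6$; choosing $M$ with $Mc_0>\tfrac{\lambda}{2}c(u_0)+1$ yields $J(tu_0)\le\tfrac{t^2}{2}\|u_0\|^2-t^6\to-\infty$ as $t\to+\infty$. Under $(f_5)$ one argues identically with the stronger bound $F(\tau)\ge\tfrac{\beta_0}{2}e^{\alpha_0\tau^2}$ valid for $\tau$ large. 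In either case it suffices to pick $t^*>\rho/\|u_0\|$ large enough that $J(t^*u_0)\le0$ and set $e:=t^*u_0$.

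The only genuinely delicate point — and the reason $(f_2)$ is stated with the \emph{supersextic} growth $F(t)/t^6\to+\infty$ (resp.\ $(f_5)$ with exponential growth) — is that the nonlocal Chern--Simons term contributes a term of order $t^6$ to $J(tu_0)$, in contrast with the classical Schr\"odinger situation where the leading term is merely quadratic, so the nonlinearity must be strong enough to absorb it. Localizing $u_0$ to a set bounded away from the origin is what allows one to extract the positive constant $c_0$ notwithstanding that $K$ may be singular at $0$ and vanishes at infinity; the choice of $\mu_1$ in (i) is forced by the competing $\mu$-term, which is the only negative contribution at small norms.
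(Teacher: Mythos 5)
Your proposal is correct and follows essentially the same route as the paper: part (i) is the identical computation (drop the nonnegative Chern--Simons term, apply \eqref{fF} with $\ep=\frac14$ and \eqref{g}, and balance the exponents $q<2<s$ to fix $\rho$, $\eta$ and $\mu_1$), and part (ii) rests on the same observation that the growth $F(t)/t^{6}\to+\infty$ from $(f_2)$ (or $(f_5)$) dominates the $t^{6}$ contribution of $\tfrac{\lambda}{2}c(tu_0)$. Your version of (ii) is in fact slightly more careful than the paper's, which applies the pointwise bound $F(t_0u_0)\ge C_F(t_0u_0)^{6}$ on all of $\supp u_0$ rather than only on a level set $\{u_0\ge\delta\}$; the one residual point in your argument is that discarding $\int_{\supp u_0\setminus A}K(|x|)F(tu_0)\,dx$ tacitly needs $F\ge 0$ there, but since $(f_2)$ and continuity force $F$ to be bounded below on $[0,+\infty)$, that piece is $O(1)$ in $t$ and harmless.
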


\begin{proof}
(i) Let $\epsilon=1/4$ in \eqref{fF}, since $c(u)\geq 0$ for all $u\in X_r$, by \eqref{fF} and
\eqref{g}, we have
\begin{align*}
 J(u)&\geq \frac{1}{4}\|u\|^q\big(\|u\|^{2-q}-C_1\|u\|^{s-q}-C_2\mu\big),~
 \|u\|\leq  \Upsilon<\bigg(\frac{4\pi}{\alpha
    \overline{r}_2}(1+\frac{b_0}{2})\bigg)^{\frac{1}{2}}.
\end{align*}
Since $1<q<2<s$, there exist constants $\varsigma>0$ and $C_3>0$
such that $\varsigma^{2-q}-C_1\varsigma^{s-q}>C_3$.
Choosing $\mu_1=C_3/2C_2$, $\rho=\min\{\varsigma,\Upsilon\}$ and
$\eta=\varsigma^{q}/8C_3$, we can get the Point-(i).

 (ii) If $(f_5)$ holds, then we know that $\liminf_{t\to+\infty}F(t)/t^6=+\infty$ and the conclusion follows directly. If  $(f_2)$ holds, for $u_0\in X_r$ with $\|u_0\|\equiv 1$,
 then for every $C_F>0$, there is a sufficiently large $t_0>0$ such that $F(t_0u_0)\geq C_F(t_0 u_0)^6$.
Combining $(g)$ and Lemmas \ref{imbedding2}-\ref{imbedding3},
 one has
 \[
 J(t_0u_0)\leq \frac{t_0^2}{2}+Ct_0^6-CC_Ft_0^6<0
 \]
 if we increase the constant $C_F>0$ large enough. Let $e=t_0u_0$ with $t_0>\rho$, we then derive
 the Point-(ii).
 The proof is complete.
\end{proof}

By Lemma \ref{geometry} and the mountain-pass theorem in \cite{Willem}, a (PS) sequence of the functional $J:X_r\to \R$ at the level
\begin{equation}\label{Mountainpass1}
 c\triangleq\inf_{\gamma\in \Gamma}\max_{t\in [0,1]}J(\gamma(t))>0
\end{equation}
can be constructed, where the set of paths is defined as
\[
  \Gamma\triangleq\big\{\gamma\in C\big([0,1],X_r \big):\gamma(0)=0,
  ~ J(\gamma(1))<0\big\}.
\]
In other words, there exists a sequence $\{u_n\}\subset X_r $ such that
\begin{equation}\label{Mountainpass2}
  J(u_n)\to c,~ J^{\prime}(u_n)\to 0~ \text{as}~ n\to \infty.
\end{equation}
Next, since the nonlinearity is of  critical exponential
growth \eqref{definition}, we  need to prove that

\begin{lemma}\label{estimate}
Suppose that $(V,K)\in \mathcal{K}$, $f$ satisfies \eqref{definition} and
 $(f_1)-(f_3)$, and $(g)$ hold, then $c<\frac{\pi}{3\alpha_0}(1+\frac{b_0}{2})$.
\end{lemma}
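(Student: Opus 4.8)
The plan is to exhibit a single well-chosen test function (or one-parameter family) $w$ supported in a small ball around the origin and to show that $\max_{t\geq 0} J(tw) < \frac{\pi}{3\alpha_0}(1+\frac{b_0}{2})$, which bounds the mountain-pass level $c$ from above since $t\mapsto tw$ (after rescaling the parameter interval) is an admissible path in $\Gamma$. The natural candidate is a truncated Moser-type function, but here the cleanest choice is to exploit $(f_3)$: take $w$ with $\|w\|$ controlled and use the pointwise bound $F(t)\geq \kappa t^p$ for $t\in[0,1]$ on the region where $tw\leq 1$. Concretely I would first estimate $J(tw)$ from above by discarding the nonnegative terms I do not need and using Lemma \ref{imbedding3} to control the Chern-Simons term by $C\|w\|^6 t^6$, Lemma \ref{imbedding2} (or simply $(g)$ and \eqref{g}) to discard the $\mu$-term as it only helps, and the lower bound $\int_{\R^2}K(|x|)F(tw)\,dx \geq \kappa t^p \int_{B_{1/2}(0)} K(|x|)|w|^p\,dx$ on a ball where $tw\leq 1$. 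This produces an explicit upper envelope of the form $\frac12 t^2 \|w\|^2 + \frac{\lambda}{2}C_\ast t^6\|w\|^6 - \kappa t^p C_K$, and maximizing over $t$ yields a bound that becomes small precisely when $\kappa$ is large — this is exactly why the hypothesis $\kappa\geq\kappa^\ast$ with the specific constant $\kappa_1$ (built from $\|K\|_{L^1(B_{1/2}(0))}$, $\|V\|_{L^1(B_1(0))}$, and the exponent $1+b_0/2$) enters.

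The key computational step is the optimization of $\phi(t)=\frac12 a t^2 + \frac{\lambda}{2} b t^6 - c\, t^p$ over $t>0$ with $p>6$: for $t$ small the quadratic term dominates and for $t$ large the $-c t^p$ term dominates, so $\phi$ attains a finite maximum at some interior $t_\kappa$; one shows $t_\kappa\leq 1$ (so that the use of $(f_3)$ is legitimate on the relevant region) and that $\max_t\phi(t)\to 0$ as $\kappa\to\infty$, with an explicit quantitative rate. Matching that rate against the threshold $\frac{\pi}{3\alpha_0}(1+\frac{b_0}{2})$ is precisely what forces the second term in the definition of $\kappa^\ast$, namely $\frac{2\kappa_1}{p}\big[\frac{3\alpha_0\kappa_1(p-2)\|K\|_{L^1(B_{1/2}(0))}}{p\pi(1+b_0/2)}\big]^{(p-2)/2}$. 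I would carry out the maximization by elementary calculus, keeping $\lambda$-dependence visible, and then verify that for $\kappa\geq\kappa^\ast$ the maximum lies strictly below the target value uniformly for all $\lambda>0$ (the factor $(16+\lambda)\pi$ in $\kappa_1$ is what guarantees the $\lambda$-uniformity).

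The main obstacle is bookkeeping: one must choose $w$ (e.g. a fixed radial bump, or $w=$ a normalized function supported in $B_{1/2}(0)$) so that $\|w\|^2$, $\|w\|^6$, $c(w)$, and $\int_{B_{1/2}(0)}K(|x|)|w|^p\,dx$ are all simultaneously controlled by the explicit $L^1$-norms appearing in $\kappa_1$; the cleanest route is to take $w$ to be the characteristic-function-type profile (or a mollification thereof) on $B_{1/2}(0)$ so that $\int K|w|^p$ is comparable to $\|K\|_{L^1(B_{1/2}(0))}$, $\|\nabla w\|_2^2 + |w|_{V,2}^2$ is comparable to $\pi + \|V\|_{L^1(B_1(0))}$, and $c(w)$ is handled directly from its definition \eqref{gaugepart} rather than through the nonexplicit constant in Lemma \ref{imbedding3}. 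Once the constants are pinned down, the inequality $c \leq \max_{t}\phi(t) < \frac{\pi}{3\alpha_0}(1+\frac{b_0}{2})$ follows by the arithmetic described above. I also need to confirm that the chosen path actually reaches a negative value of $J$ (so it lies in $\Gamma$), which is immediate from Lemma \ref{geometry}(ii) or directly from the $-c t^p$ term with $p>6$ dominating the $t^6$ Chern-Simons term.
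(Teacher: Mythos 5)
Your overall strategy coincides with the paper's: a fixed radial bump $\varphi_0$ with $0\le\varphi_0\le 1$, $\varphi_0\equiv 1$ on $B_{1/2}(0)$ and support in $B_1(0)$, the explicit bound $c(\varphi_0)\le\pi/16$ computed from the definition \eqref{gaugepart} rather than from Lemma \ref{imbedding3}, the lower bound $\int K F(t\varphi_0)\,dx\ge\kappa t^p\|K\|_{L^1(B_{1/2}(0))}$ from $(f_3)$, and an elementary optimization in $t$ matched against the threshold. Two points in your sketch, however, do not close as written. First, the envelope $\phi(t)=\tfrac12 at^2+\tfrac{\lambda}{2}bt^6-ct^p$ is an upper bound for $J(tw)$ only while $tw\le 1$ pointwise, i.e.\ for $t\le 1$; showing that the maximizer $t_\kappa$ of $\phi$ satisfies $t_\kappa\le1$ does not help if your path must run out to some $T>1$ to reach negative energy, and your fallback via Lemma \ref{geometry}(ii) produces an endpoint $t_0u_0$ with $t_0$ possibly large, where $(f_3)$ no longer controls $F$. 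The paper resolves this by using the first constraint $\kappa\ge\kappa_1$ to force $J(\varphi_0)<0$ already at $t=1$, so the admissible path is $\gamma_0(t)=t\varphi_0$, $t\in[0,1]$, and $(f_3)$ applies along all of it; this is the actual role of $\kappa_1$, not ``$\lambda$-uniformity'' (indeed $\kappa^*$ depends on $\lambda$ through $\kappa_1$). Second, the paper never performs your three-term optimization: since $t\in[0,1]$, it absorbs $\tfrac{\lambda t^6}{2}c(\varphi_0)$ into the quadratic term via $t^6\le t^2$, reducing the problem to $\max_{t\ge0}\{\kappa_1t^2-\kappa t^p\}=\tfrac{p-2}{p}\kappa_1(\tfrac{2\kappa_1}{p\kappa})^{2/(p-2)}$, whose explicit value is exactly what the second entry in $\kappa^*$ is calibrated to. With these two adjustments your argument becomes the paper's proof.
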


\begin{proof}
Let $\varphi_0\in X_r$ be a cut-off function
satisfying $\varphi_0\in C_{0,r}^\infty(\R^2)$
defined by $0\leq \varphi_0(x)\leq1$ for every $x\in\R^2$, $\varphi_0(x)\equiv 1$
if $|x|\leq1/2$, $\varphi_0(x)\equiv 0$
if $|x|\geq1$ and $|\nabla \varphi_0|\leq1$
for all $x\in\R^2$. In view of the proof of
 Lemma \ref{imbedding3}, we can compute that
\begin{equation}\label{estimate1}
 c(\varphi_0)\leq \frac{1}{16\pi}\int_{B_1(0)}|\varphi_0|^2dx
 \int_{B_1(0)}|\varphi_0|^4dx\leq \frac{\pi}{16}.
\end{equation}
Combing $(f_3)$, $g(x)\geq 0$ in $(g)$ and \eqref{estimate1}, we infer that
\begin{align}\label{estimate1a}
\nonumber  J(\varphi_0) &\leq  \frac{1}{2}\int_{B_1(0)}[|\nabla \varphi_0|^2+V(|x|)|\varphi_0|^2]dx
+\frac{\lambda}{2}c(\varphi_0)-\int_{B_1(0)}K(|x|)F(\varphi_0)dx \\
\nonumber     &< \frac{(16+\lambda)\pi+16\|V\|_{L^1(B_1(0))}}{32}-\kappa\int_{B_{1/2}(0)}K(|x|) dx\\
    &\leq \frac{(16+\lambda)\pi+16\|V\|_{L^1(B_1(0))}}{32}-\kappa_1 \|K\|_{L^1(B_{1/2}(0))}
    =0.
\end{align}
 In particular, one can deduce from \eqref{estimate1a} that
 \begin{equation}\label{estimate2}
\int_{B_1(0)}[|\nabla \varphi_0|^2+V(|x|)|\varphi_0|^2]dx
+\lambda c(\varphi_0)<2\kappa_1 \|K\|_{L^1(B_{1/2}(0))}.
 \end{equation}
Choosing $\gamma_0(t)=t\varphi_0$, one easily knows that $\gamma_0(t)\in\Gamma$ by \eqref{estimate1a}.
According to the definition of $c$, then by \eqref{estimate2}
and $g(x)\geq 0$ in $(g)$, we have
\begin{align*}
  c &\leq \max_{t\in[0,1]}J(t\varphi_0)
  \leq \max_{t\in[0,1]}\bigg\{
  \frac{t^2}{2}\int_{B_1(0)}[|\nabla \varphi_0|^2+V(|x|)|\varphi_0|^2]dx
+ \frac{\lambda t^6}{2}c(\varphi_0)-\kappa t^p \|K\|_{L^1(B_{1/2}(0))}
  \bigg\}  \\
  &< \|K\|_{L^1(B_{1/2}(0))}\max_{t\in[0,1]}\bigg\{ \kappa_1 t^2
  -\kappa t^p\bigg\}\leq \|K\|_{L^1(B_{1/2}(0))}\max_{t\geq0}\big\{ \kappa_1 t^2
  -\kappa t^p\big\}\\
  &=  \frac{(p-2)\|K\|_{L^1(B_{1/2}(0))}}
  {p}\kappa_1\bigg(\frac{2\kappa_1}{p\kappa}\bigg)^{\frac{2}{p-2}}
  \leq\frac{\pi}{3\alpha_0}(1+\frac{b_0}{2})~\text{if}~\kappa\geq\kappa^*,
\end{align*}
where we have used the definition of $\kappa^*$. The proof is complete
\end{proof}

As a byproduct of Lemma \ref{estimate},
we can derive the following lemma which plays a significant role in recovering the compactness
caused by the critical exponential growth \eqref{definition}.

\begin{lemma}\label{bounded}
Suppose that $(V,K)\in \mathcal{K}$, $f$ satisfies \eqref{definition} and
 $(f_1)-(f_3)$, and $(g)$ hold.
Let $\{u_n\}\subset X_r$ be a (PS) sequence at the level $c$ of $J$, then there is
a constant $\mu_2>0$ such that
for any $\mu\in(0,\mu_2)$, there holds
\[
\limsup_{n\to\infty}\|u_n\|^2<\frac{4\pi}{\alpha_0}(1+\frac{b_0}{2}).
\]
\end{lemma}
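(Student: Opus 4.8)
The plan is to show that a $(PS)_c$ sequence for $J$ is bounded in $X_r$ and that the limit of its norm-squared is controlled by $4\pi(1+b_0/2)/\alpha_0$, using Lemma~\ref{estimate} to keep the bound strictly below the critical threshold. First I would establish boundedness of $\{u_n\}$ in $X_r$. Testing $J'(u_n)$ against $u_n$ and combining with $J(u_n)\to c$, I form the combination $J(u_n)-\tfrac{1}{6}\langle J'(u_n),u_n\rangle$; the quadratic term contributes $(\tfrac12-\tfrac16)\|u_n\|^2=\tfrac13\|u_n\|^2$, the Chern--Simons term drops out with a favorable sign because its homogeneity is exactly $6$ (so $\tfrac{\lambda}{2}c(u_n)-\tfrac{\lambda}{6}\cdot 6\cdot\tfrac12 c(u_n)=0$ — here one uses that the derivative of $c$ at $u_n$ paired with $u_n$ equals $6c(u_n)$, which follows from the homogeneity visible in \eqref{gaugepart} and \eqref{solution}), the nonlinearity $K(|x|)f(u)u$ combines with $-K(|x|)F(u)$ to give $\tfrac16\int_{\R^2}K(|x|)[f(u_n)u_n-6F(u_n)]dx\geq 0$ by $(f_2)$, and the $g$-term contributes $(\tfrac16-\tfrac{1}{q})\mu\int g|u_n|^q\,dx$, which is bounded below by $-C\mu\|u_n\|^q$ via \eqref{g}. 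Since $q<2$, the inequality $\tfrac13\|u_n\|^2\leq c+o(1)+o(\|u_n\|)+C\mu\|u_n\|^q$ forces $\{u_n\}$ to be bounded.

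Next, having boundedness, I pass to a subsequence with $u_n\rightharpoonup u$ in $X_r$ and (by Lemma~\ref{imbedding2}) $u_n\to u$ strongly in $L^s_K(\R^2)$ for all $s\geq 2$, and $u_n\to u$ in $L^\nu_{\text{loc}}$ by Lemma~\ref{imbedding1}. The goal is to bound $\limsup_n\|u_n\|^2$. Starting from $\tfrac13\|u_n\|^2 = c - \big(J(u_n)-\tfrac16\langle J'(u_n),u_n\rangle\big) + \tfrac13\|u_n\|^2$ is circular, so instead I would pass to the limit carefully in the identity $\tfrac13\|u_n\|^2 = c + o(1) - \tfrac16\int K(|x|)[f(u_n)u_n-6F(u_n)]dx + (\tfrac1q-\tfrac16)\mu\int g|u_n|^q\,dx$. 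The last term converges to $(\tfrac1q-\tfrac16)\mu\int g|u|^q\,dx$, which is a nonnegative quantity bounded by $C\mu\|u\|^q\leq C\mu M^q$ where $M=\limsup\|u_n\|$ (using \eqref{g} and weak lower semicontinuity to control $\|u\|$). The Chern--Simons and $f(u_n)u_n-6F(u_n)$ terms are nonnegative and can only help, but to get a clean bound I would simply drop the nonnegative integral $\tfrac16\int K(|x|)[f(u_n)u_n-6F(u_n)]\geq 0$, obtaining $\tfrac13\limsup_n\|u_n\|^2\leq c + C\mu M^q$. Then $M^2\leq 3c + 3C\mu M^q$; since $q<2$ and $3c<\tfrac{\pi}{\alpha_0}(1+\tfrac{b_0}{2})$ strictly by Lemma~\ref{estimate}, a continuity/smallness argument in $\mu$ yields a threshold $\mu_2>0$ so that for $\mu\in(0,\mu_2)$ one has $M^2<\tfrac{4\pi}{\alpha_0}(1+\tfrac{b_0}{2})$: indeed $3c<\tfrac{\pi}{\alpha_0}(1+\tfrac{b_0}{2})<\tfrac{4\pi}{\alpha_0}(1+\tfrac{b_0}{2})$, leaving room to absorb $3C\mu M^q$ for small $\mu$ (one can, e.g., first bound $M$ by a $\mu$-independent constant from the boundedness step, then choose $\mu_2$ accordingly).

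The main obstacle I anticipate is the justification that one may legitimately pass to the limit in, or at least usefully estimate, the terms $\langle J'(u_n),u_n\rangle$ and $J(u_n)$ given that $J$ is only defined and differentiable on $X_r$ (not on all of $X$) and the Chern--Simons functional $c(u)$ is a delicate nonlocal sextic quantity. Concretely one must verify that $\langle J'(u_n),u_n\rangle$ genuinely produces the term $3\lambda c(u_n)$ with the correct coefficient — this relies on the precise form of the weak-solution identity \eqref{solution} and the Fubini manipulation recorded there, together with Lemma~\ref{imbedding3} to guarantee everything is finite. A secondary subtlety is that $J'(u_n)\to 0$ in the dual of $X_r$ means $\langle J'(u_n),u_n\rangle = o(\|u_n\|)=o(1)$ only after boundedness is known, so the boundedness step and the limit step must be sequenced correctly and the $o(1)$ terms tracked honestly. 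Finally, extracting the explicit constant $\mu_2$ requires combining the a priori bound $M\leq C_0$ (with $C_0$ independent of $\mu\in(0,1)$, say) from the first step with the refined inequality $M^2\leq 3c+3C\mu M^q$ to get $M^2\leq 3c+3CC_0^q\,\mu$, and then setting $\mu_2=\big(\tfrac{4\pi}{\alpha_0}(1+\tfrac{b_0}{2})-3c\big)/(3CC_0^q)$, which is positive precisely because of the strict inequality supplied by Lemma~\ref{estimate}.
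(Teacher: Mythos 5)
Your proof is correct and follows essentially the same route as the paper: both work with $J(u_n)-\tfrac16\langle J'(u_n),u_n\rangle$, use $(f_2)$ and \eqref{g} to reduce to $\tfrac13\|u_n\|^2\le c+C\mu\|u_n\|^q+o(1)$, and invoke the strict bound of Lemma \ref{estimate} to leave room for small $\mu$. The only difference is that the paper absorbs the term $C\mu\|u_n\|^q$ via Young's inequality into $\tfrac16\|u_n\|^2$ plus an explicit $\mu$-dependent constant, giving $\limsup_n\|u_n\|^2\le 6c+D(\mu)$ and the explicit threshold \eqref{bounded2}, whereas you keep the sharper coefficient $3c$ and absorb the $q$-term using the a priori bound $M\le C_0$ --- which works, provided you define $\mu_2$ through the $\mu$-independent upper bound $c<\tfrac{\pi}{3\alpha_0}(1+\tfrac{b_0}{2})$ rather than through $c$ itself (which depends on $\mu$).
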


\begin{proof}
By $(f_2)$ and \eqref{g}, it follows from the Young's inequality with $\epsilon=q/(6-q)$ that
\begin{align}\label{bounded0}
\nonumber  c +o(1)\|u_n\|  &\geq J(u_n)-\frac{1}{6} \langle J^\prime(u_n),u_n\rangle\\
\nonumber   &\geq  \frac{1}{3}\|u_n\|^2- \frac{\mu C(6-q)}{6q} \|u_n\|^q\\
\nonumber   &\geq \frac{1}{3}\|u_n\|^2-\frac{(6-q)}{6q} \bigg[
\epsilon \|u_n\|^2+\bigg(\frac{q}{2\epsilon}\bigg)^{q/(2-q)}\frac{2-q}{2}(\mu C)^{2/(2-q)}
\bigg]\\
&=\frac{1}{6}\|u_n\|^2-\frac{(6-q)(2-q)}{12q}
\bigg(\frac{6-q}{2}\bigg)^{q/(2-q)} (\mu C)^{2/(2-q)} ,
\end{align}
yielding that $\{u_n\}$ is a bounded sequence in $X_r$.
Since $\{u_n\}$ is a (PS) sequence at the level $c$,
 by \eqref{Mountainpass2} and \eqref{bounded0}, we obtain
 \begin{align}\label{bounded1}
\nonumber\limsup_{n\to\infty}\|u_n\|^2 & \leq
\limsup_{n\to\infty}\big[6J(u_n)- \langle J^\prime(u_n),u_n\rangle\big] +\frac{(6-q)(2-q)}{2q}
\bigg(\frac{6-q}{2}\bigg)^{\frac{q}{2-q} } (\mu C)^{\frac{2}{2-q} } \\
  &= 6c+\frac{(6-q)(2-q)}{2q}
\bigg(\frac{6-q}{2}\bigg)^{q/(2-q)}(\mu C)^{\frac{2}{2-q} }.
 \end{align}
 Because the constant $C>0$ comes from Lemma \ref{imbedding3},
 we can define
\begin{equation}\label{bounded2}
\mu_2\triangleq  C^{-1}\bigg[\frac{4\pi q}{(6-q)(2-q)\alpha_0}(1+\frac{b_0}{2})\bigg]^{\frac{2-q}{2}}
\bigg(
\frac{2}{6-q}\bigg)^{\frac{q}{2}}>0.
\end{equation}
Combing Lemma \ref{estimate} and \eqref{bounded1}-\eqref{bounded2}, we'll get the desired result.
The proof is complete.
\end{proof}

By Lemma \ref{bounded}, we have the following convergence properities.

\begin{lemma}\label{weak}
Suppose that $(V,K)\in \mathcal{K}$, $f$ satisfies \eqref{definition} and
 $(f_1)-(f_3)$, and $(g)$ hold.
Let $\{u_n\}\subset X_r$ be a (PS) sequence at the level $c$ of $J$, then, going to a subsequence if necessary,
for all $0<\mu< \mu_2$,
 there exists
a function $u\in X_r$ such that
\[
\left\{
  \begin{array}{ll}
   \displaystyle \lim_{n\to\infty}\int_{\R^2}K(|x|) f(u_n)(u_n-u)dx=0~\text{\emph{and}}~
 \lim_{n\to\infty}\int_{\R^2}K(|x|) f(u)(u_n-u)dx=0,\vspace{2mm} \\
  \displaystyle   \lim_{n\to\infty}c^\prime(u_n)[u_n-u]=0~\text{\emph{and}}~\lim_{n\to\infty}c^\prime(u)[u_n-u]=0 \\
   \displaystyle \lim_{n\to\infty}\int_{\R^2} g(|x|)|u_n|^{q-2}u_n(u_n-u)dx=0~\text{\emph{and}}~
\lim_{n\to\infty}\int_{\R^2} g(|x|)|u|^{q-2}u(u_n-u)dx=0.
  \end{array}
\right.
\]
\end{lemma}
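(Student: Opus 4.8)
The plan is to fix a convenient subsequence and then split the six limits into two groups: the three identities involving the fixed function $u$, which I will obtain by showing that certain linear forms belong to $(X_r)^*$ and invoking weak convergence, and the three identities involving $u_n$, which need genuine compactness built on Lemma~\ref{bounded}, the compact embedding of Lemma~\ref{imbedding2}, the Rellich theorem on balls, and the radial decay of Lemma~\ref{radial}. By Lemma~\ref{bounded}, for $\mu\in(0,\mu_2)$ the sequence $\{u_n\}$ is bounded in $X_r$ with $\Lambda:=\limsup_n\|u_n\|^2<\frac{4\pi}{\alpha_0}(1+\frac{b_0}{2})$; passing to a subsequence there is $u\in X_r$ with $u_n\rightharpoonup u$ in $X_r$, $u_n\to u$ in $L^s_K(\R^2)$ for every $s\in[2,\infty)$ (Lemma~\ref{imbedding2}), $u_n\to u$ in $L^\nu(B_R(0))$ for every $\nu\ge1$ and $R>0$ (Lemma~\ref{imbedding1} and Rellich), and $u_n\to u$ a.e.; weak lower semicontinuity gives $\|u\|^2\le\Lambda<\frac{4\pi}{\alpha_0}(1+\frac{b_0}{2})$. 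Throughout I write $h_w(|x|):=\int_0^{|x|}\frac r2 w^2(r)\,dr$.

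For the second identity in each of the three lines I would show that the linear forms
\[
v\mapsto\int_{\R^2}K(|x|)f(u)v\,dx,\qquad v\mapsto c'(u)[v],\qquad v\mapsto\int_{\R^2}g(|x|)|u|^{q-2}uv\,dx
\]
belong to $(X_r)^*$; each limit then follows from $u_n-u\rightharpoonup0$ in $X_r$. For the first form I would use $|f(u)|\le\epsilon|u|+C_\epsilon|u|^{s-1}(e^{\alpha u^2}-1)$, Hölder with three exponents, Lemma~\ref{imbedding2} and Lemma~\ref{nonlinearity2}, the exponential factor being integrable because the strict inequality $\|u\|^2<\frac{4\pi}{\alpha_0}(1+\frac{b_0}{2})$ leaves room for an extra exponent $\overline r_2>1$, exactly as in the computation preceding \eqref{fF}. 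For $c'(u)$, differentiating \eqref{gaugepart} (compare the Chern--Simons terms in \eqref{solution}) gives
\[
c'(u)[v]=2\int_{\R^2}\frac{uv}{|x|^2}h_u^2\,dx+2\int_{\R^2}\frac{u^2}{|x|^2}h_u\Big(\int_0^{|x|}ruv\,dr\Big)dx,
\]
and using Cauchy--Schwarz (so $|\int_0^{|x|}ruv\,dr|\le2h_u^{1/2}h_v^{1/2}$), Hölder with exponents $\frac43$ and $4$ for the second integral, the bound $c(w)\le C\|w\|^6$ of Lemma~\ref{imbedding3}, the estimates \eqref{imbedding3a}--\eqref{imbedding3b}, and Lemma~\ref{radial} (where $a>2/3$ is what makes the exterior integral finite), one reaches $|c'(u)[v]|\le C\|u\|^5\|v\|$. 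For the third form, the argument of \eqref{g} applied to $|u|^{q-1}|v|$ — splitting at $\overline{R}_0$, using $g\in L^\infty_{\mathrm{loc}}$ near the origin and $g(r)\le\overline M r^\sigma V^{q/2}(r)$ with $\sigma<q-2$ at infinity — gives $\big|\int_{\R^2}g|u|^{q-2}uv\,dx\big|\le C\|u\|^{q-1}\|v\|$.

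For the limits at $u_n$ I would argue as follows. For $\int_{\R^2}K(|x|)f(u_n)(u_n-u)\,dx$, write $|f(u_n)|\le\epsilon|u_n|+C_\epsilon|u_n|^{s-1}(e^{\alpha u_n^2}-1)$: the first term contributes $\le\epsilon|u_n|_{K,2}|u_n-u|_{K,2}\to0$, while the second is treated by Hölder with exponents $t_1,t_3\ge2$ and $t_2>1$ chosen, together with $\alpha>\alpha_0$ and $\overline r_2>t_2$ taken close enough to $\alpha_0$ and $1$, so that $\alpha t_2\overline r_2\Lambda<4\pi(1+\frac{b_0}{2})$ (possible by the first paragraph); then $(e^{\alpha t^2}-1)^{t_2}\le C(e^{\alpha\overline r_2 t^2}-1)$ and Lemma~\ref{nonlinearity1} bound $\int_{\R^2}K(e^{\alpha u_n^2}-1)^{t_2}dx$ uniformly in $n$, $\int_{\R^2}K|u_n|^{(s-1)t_1}dx$ is bounded by Lemma~\ref{imbedding2}, and $\int_{\R^2}K|u_n-u|^{t_3}dx\to0$ by the compactness there. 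For the Chern--Simons term, write $c'(u_n)[u_n-u]=2I_n^{(1)}+2I_n^{(2)}$ as in the formula above (with $u_n$ in place of $u$ and $u_n-u$ in place of $v$); then Cauchy--Schwarz, Hölder (exponents $\frac43,4$) and $c(u_n)\le C\|u_n\|^6$ reduce matters to proving $\int_{\R^2}\frac{(u_n-u)^2}{|x|^2}h_{u_n}^2\,dx\to0$ and $\int_{\R^2}\frac{u_n^2}{|x|^2}h_{u_n-u}^2\,dx\to0$, and each of these I would handle by splitting at a radius $R>R_0$: on $B_R$, \eqref{imbedding3b} bounds the integral by a product of an $L^2(B_R)$-norm and an $L^4(B_R)$-norm of $u_n$ and $u_n-u$, one of which tends to $0$ and the other stays bounded; on $\R^2\setminus B_R$, \eqref{imbedding3a} and Lemma~\ref{radial} bound the integrand by $C|x|^{-\frac32 a-1}$, whose integral over $\R^2\setminus B_R$ tends to $0$ as $R\to\infty$ uniformly in $n$ because $a>2/3$. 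Finally, for $\int_{\R^2}g(|x|)|u_n|^{q-2}u_n(u_n-u)\,dx$ I would bound the integrand by $g|u_n|^{q-1}|u_n-u|$ and split at $R>\overline{R}_0$: on $B_R$, $g\in L^\infty(B_R)$ and Hölder give $\le\|g\|_{L^\infty(B_R)}\|u_n\|_{L^q(B_R)}^{q-1}\|u_n-u\|_{L^q(B_R)}\to0$; on $\R^2\setminus B_R$, $g(r)\le\overline M r^\sigma V^{q/2}(r)$ and Hölder with exponents $\frac2{2-q},\frac2{q-1},2$ give $\le\overline M\big(\int_{\R^2\setminus B_R}|x|^{\frac{2\sigma}{2-q}}dx\big)^{\frac{2-q}{2}}\|u_n\|^{q-1}\|u_n-u\|$, which is small uniformly in $n$ for $R$ large since $\sigma<q-2$. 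In each case one first fixes $R$ large to absorb the exterior contribution and then lets $n\to\infty$.

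I expect the main obstacle to be the pair of ``$u_n$'' identities for the nonlocal Chern--Simons term: one must pick the Hölder exponents ($\frac43$ and $4$) correctly after the Cauchy--Schwarz reductions and then close the exterior estimates by combining the borderline assumption $a>2/3$ in $(V_0)$ with the decay of Lemma~\ref{radial}. A close second is making the Trudinger--Moser bound for $\int_{\R^2}K(e^{\alpha u_n^2}-1)^{t_2}dx$ uniform along the whole sequence, which is precisely where the strict inequality $\Lambda<\frac{4\pi}{\alpha_0}(1+\frac{b_0}{2})$ from Lemma~\ref{bounded} is indispensable.
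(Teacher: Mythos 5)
Your proposal is correct, and for the hard half of the lemma --- the three limits evaluated at $u_n$ --- it follows the paper's proof essentially verbatim: the same H\"older/Trudinger--Moser scheme made uniform by the strict bound of Lemma~\ref{bounded} together with the compactness of Lemma~\ref{imbedding2} for the $f(u_n)$ term, the same Cauchy--Schwarz reduction of $c'(u_n)[u_n-u]$ to the two integrals $\int_{\R^2}\frac{u_n^2}{|x|^2}\bigl(\int_0^{|x|}rv_n^2\,dr\bigr)^2dx$ and $\int_{\R^2}\frac{v_n^2}{|x|^2}\bigl(\int_0^{|x|}\frac r2u_n^2\,dr\bigr)^2dx$, and the same splitting at a large radius (interior handled by Rellich via Lemma~\ref{imbedding1}, exterior by the decay of Lemma~\ref{radial} and $a>2/3$), which is exactly the paper's \eqref{weak3}--\eqref{weak4}. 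Where you diverge is in the three limits evaluated at the fixed function $u$: the paper disposes of these by saying the same estimates apply ``similarly,'' whereas you observe that $v\mapsto\int K f(u)v\,dx$, $v\mapsto c'(u)[v]$ and $v\mapsto\int g|u|^{q-2}uv\,dx$ are bounded linear functionals on $X_r$ (with, e.g., $|c'(u)[v]|\le C\|u\|^5\|v\|$ obtained from the same interior/exterior splitting used for Lemma~\ref{imbedding3}) and invoke $u_n-u\rightharpoonup0$. This duality shortcut is cleaner and avoids repeating the compactness argument where none is needed; it is a genuine, if modest, simplification. Two cosmetic remarks: your condition $\alpha t_2\overline r_2\Lambda<4\pi(1+\frac{b_0}{2})$ is stronger than the needed $\alpha\overline r_2\Lambda<4\pi(1+\frac{b_0}{2})$ with $\overline r_2>t_2$, but is still attainable, and for $q=1$ the three-exponent H\"older step for the $g$-functional degenerates ($|u|^{q-1}\equiv1$), which only makes that estimate easier. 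You also fill in the $g$-term limits, which the paper omits; your interior/exterior argument there is correct.
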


\begin{proof}
Recalling Lemmas \ref{imbedding2} and \ref{bounded}, there exist a subsequence of $\{u_n\}$, still denoted by itself, and
a function $u\in X_r$ such that
\[
u_n\rightharpoonup u~ \text{in}~ X_r,~
u_n\to u~ \text{in}~ L^{ s }_K(\R^2)~
\text{for}~ s\in[2,+\infty)~\text{and}~
u_n\to u~ \text{a.e. in}~ \R^2.
\]
Choosing $r_1,r_2>1$ such that $\frac{1}{r_1} + \frac{1}{r_2}= 1$ as \eqref{fF}, then if $r_2$ is
sufficiently close to 1, there exists a constant $\overline{r}_2>r_2$
such that $\sup_{n\in \mathbb{N}}{\alpha \overline{r}_2\|u_n\|^2 } < 4\pi(1+\frac{b_0}{2})$
for all $\alpha>\alpha_0$ by Lemma \ref{bounded}. It follows from Lemma \ref{nonlinearity2} that
\begin{equation}\label{weak1}
 \int_{\R^2} K(|x|)(e^{\alpha |u_n|^2}-1)^{r_2}dx \leq
 C\int_{\R^2} K(|x|)(e^{\alpha \overline{r}_2\|u_n\|^2 (|u_n|^2/\|u_n\|^2)}-1) dx\leq C<+\infty.
\end{equation}
Notice that $r_2$ is very
close to 1, without loss of generality, then we can suppose that $r_1>2$.
Let $r_3,r_4>1$ such that $\frac{1}{r_3} + \frac{1}{r_4}= 1$, since $\{u_n\}$ is bounded and $u_n\rightharpoonup u$,
by Lemma \ref{imbedding2}, we have that
\begin{equation}\label{weak2}
\left\{
  \begin{array}{ll}
  |u_n|_{K,2}~\text{and}~|u_n|_{K,r_1r_3(s-1)}~\text{are uniformly bounded with respect to}~n\in\mathbb{N}, \\
 |u_n-u|_{K,2}\to0~\text{and}~|u_n-u|_{K,r_1r_4}\to0.
  \end{array}
\right.
\end{equation}
Combing \eqref{weak1} and \eqref{weak2}, we obtain
\begin{eqnarray*}
  &&\bigg|\int_{\R^2}K(|x|) f(u_n)(u_n-u)dx\bigg| \\
  &\leq& C\bigg(\int_{\R^2}K(|x|)|u_n|^{r_1(s-1)}|u_n-u|^{r_1}dx\bigg)^{\frac{1}{r_1}}
 \bigg(\int_{\R^2} K(|x|)(e^{\alpha |u_n|^2}-1)^{r_2}dx\bigg)^{\frac{1}{r_2}}\\
 &&+C|u_n|_{K,2}|u_n-u|_{K,2}\\
 &\leq& C\bigg(\int_{\R^2}K(|x|)|u_n|^{r_1(s-1)}|u_n-u|^{r_1}dx\bigg)^{\frac{1}{r_1}}+o_n(1)\\
  &\leq&C\bigg(\int_{\R^2}K(|x|)|u_n|^{r_1r_3(s-1)} dx\bigg)^{\frac{1}{r_1r_3}}
  \bigg(\int_{\R^2}K(|x|) |u_n-u|^{r_1r_4}dx\bigg)^{\frac{1}{r_1r_4}}+o_n(1)=o_n(1).
\end{eqnarray*}
Similarly, we can deduce that $\int_{\R^2}K(|x|) f(u)(u_n-u)dx\to0$ as $n\to\infty$.

Let's define $v_n\triangleq u_n-u\rightharpoonup 0$
as $n\to\infty$, then arguing as \eqref{solution}, one has,
\begin{align*}
c^\prime(u_n)[v_n]&=\int_{\R^2}\frac{u_n^2}{|x|^2}
  \bigg(\int_{0}^{|x|}\frac{r}{2}u_n^2(r)dr\bigg)\bigg(\int_{0}^{|x|}ru_n(r)v_n(r)dr\bigg)dx  +\int_{\R^2}\frac{u_nv_n}{|x|^2}\bigg(\int_{0}^{|x|}\frac{r}{2}u_n^2(r)dr\bigg)^2dx\\
  & \triangleq c_n^1+c_n^2,
\end{align*}
where
 \begin{align*}
  |c_n^1| & = \bigg|\int_{\R^2}\frac{u_n^2}{|x|^2}
  \bigg(\int_{0}^{|x|}\frac{r}{2}u_n^2(r)dr\bigg)\bigg(\int_{0}^{|x|}ru_n(r)v_n(r)dr\bigg)dx \bigg|\\
     &  \leq \bigg(\int_{\R^2}\frac{u_n^2}{|x|^2}\bigg(\int_{0}^{|x|}\frac{r}{2}u_n^2(r)dr\bigg)^2dx\bigg)^{\frac{1}{2}}
 \bigg(\int_{\R^2}\frac{u_n^2}{|x|^2}\bigg(\int_{0}^{|x|}ru_n(r)v_n(r)dr\bigg)^2dx\bigg)^{\frac{1}{2}}\\
 &\leq C\|u_n\|^3 \bigg(\int_{\R^2}\frac{u_n^2}{|x|^2}\bigg(\int_{0}^{|x|}ru_n^2(r) dr\bigg)
 \bigg(\int_{0}^{|x|}r v_n^2(r)dr\bigg) dx\bigg)^{\frac{1}{2}}\\
 &\leq C\|u_n\|^3 \bigg(\int_{\R^2}\frac{u_n^2}{|x|^2}\bigg(\int_{0}^{|x|}ru_n^2(r) dr\bigg)^2
  dx\bigg)^{\frac{1}{4}}
 \bigg(\int_{\R^2}\frac{u_n^2}{|x|^2}
 \bigg(\int_{0}^{|x|}r v_n^2(r)dr\bigg)^2 dx\bigg)^{\frac{1}{4}}\\
&\leq C\|u_n\|^{\frac{9}{2}}\bigg(\int_{\R^2}\frac{u_n^2}{|x|^2}
 \bigg(\int_{0}^{|x|}r v_n^2(r)dr\bigg)^2 dx\bigg)^{\frac{1}{4}}
 \end{align*}
 and
\[
  |c_n^2|    =\bigg|\int_{\R^2}\frac{u_nv_n}{|x|^2}\bigg(\int_{0}^{|x|}\frac{r}{2}u_n^2(r)dr\bigg)^2dx \bigg|
  \leq C\|u_n\|^3 \bigg(\int_{\R^2}\frac{ v_n^2}{|x|^2}\bigg(\int_{0}^{|x|}\frac{r}{2}u_n^2(r)dr\bigg)^2dx\bigg)^{\frac{1}{2}}.
\]
Obviously, to verify $c^\prime(u_n)[v_n]\to 0$, it suffices to show that
\begin{equation}\label{weak3a}
\int_{\R^2}\frac{u_n^2}{|x|^2}
 \bigg(\int_{0}^{|x|}r v_n^2(r)dr\bigg)^2 dx\to0~\text{and}~
 \int_{\R^2}\frac{ v_n^2}{|x|^2}\bigg(\int_{0}^{|x|}\frac{r}{2}u_n^2(r)dr\bigg)^2dx\to0.
\end{equation}
For every $\epsilon>0$, there is a constant $R_\epsilon=\epsilon^{\frac{2}{2-3a}}>0$ such that
$R_\epsilon>R_0$ if $\epsilon>0$ is sufficiently small since $a>2/3$ in $(V_0)$, where $R_0>0$ is given by Lemma \ref{radial}.
It's similar to \eqref{imbedding3a} that
\begin{equation}\label{weak3}
\int_{\R^2\backslash B_{R_\epsilon}(0)}\frac{u_n^2}{|x|^2}
 \bigg(\int_{0}^{|x|}r v_n^2(r)dr\bigg)^2 dx
 \leq C\|u_n\|^2\|v_n\|^4  \int_{ R_\epsilon}^{+\infty}r^{-\frac{3}{2}a}dr
 \leq CR_\epsilon^{1-\frac{3a}{2}} =C\epsilon.
\end{equation}
In view of Lemma \ref{imbedding1} and \eqref{imbedding3b}, we can derive
\begin{equation}\label{weak4}
 \int_{B_{R_\epsilon}(0)}\frac{u_n^2}{|x|^2}
 \bigg(\int_{0}^{|x|}r v_n^2(r)dr\bigg)^2 dx
 \leq \frac{C\|u_n\|^2}{16\pi}\int_{B_{R_\epsilon}(0)}v_n^4dx.
\end{equation}
Fixed a $\epsilon>0$ in \eqref{weak3} and \eqref{weak4}, then
letting $n\to\infty$ in \eqref{weak3} and \eqref{weak4}, subsequently, $\epsilon\to0^+$ in
\eqref{weak3}, we get the first part of \eqref{weak3a}.
Analogously, one can accomplish the proof of the second part of \eqref{weak3a}.

 Recalling \eqref{g}, one can repeat the methods utilized in \eqref{weak3} and \eqref{weak4}
 to conclude that
$\int_{\R^2} g(|x|)|u_n|^{q-2}u_n(u_n-u)dx\to0$ and $\int_{\R^2} g(|x|)|u|^{q-2}u(u_n-u)dx\to0$.
So, we'll omit the details.
\end{proof}

Now, combining Lemmas \ref{geometry}-\ref{weak},
we can find a solution of equation\eqref{mainequation1} with positive energy.

\begin{lemma}\label{firstsolution}
Suppose that $(V,K)\in \mathcal{K}$, $f$ satisfies \eqref{definition} and
 $(f_1)-(f_3)$, and $(g)$ hold.
Then there exists a constant $\mu_0>0$ such that equation \eqref{mainequation1} admits at
  least a nontrivial solution with positive energy for any $\lambda>0$ and $\mu\in(0,\mu_0)$.
\end{lemma}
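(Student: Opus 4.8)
The plan is to obtain the required solution as the strong limit in $X_r$ of the mountain-pass Palais--Smale sequence produced in \eqref{Mountainpass2}. First I would set $\mu_0\triangleq\min\{\mu_1,\mu_2\}$, where $\mu_1$ is given by Lemma \ref{geometry} and $\mu_2$ by Lemma \ref{bounded}, and fix $\lambda>0$ and $\mu\in(0,\mu_0)$. By Lemma \ref{geometry} the functional $J$ has the mountain-pass geometry, so the minimax level $c$ in \eqref{Mountainpass1} satisfies $c\geq\eta>0$, and the mountain-pass theorem yields $\{u_n\}\subset X_r$ with $J(u_n)\to c$ and $J^{\prime}(u_n)\to0$. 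By Lemma \ref{bounded} this sequence is bounded with
\[
\limsup_{n\to\infty}\|u_n\|^2<\frac{4\pi}{\alpha_0}\Big(1+\frac{b_0}{2}\Big),
\]
so, passing to a subsequence and using Lemma \ref{imbedding2}, there is $u\in X_r$ with $u_n\rightharpoonup u$ in $X_r$, $u_n\to u$ in $L^s_K(\R^2)$ for every $s\in[2,+\infty)$, and $u_n\to u$ a.e.\ in $\R^2$.

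Next I would upgrade weak convergence to strong convergence. Testing $J^{\prime}(u_n)\to0$ against the bounded sequence $u_n-u$ gives $\langle J^{\prime}(u_n),u_n-u\rangle\to0$, i.e.
\[
(u_n,u_n-u)+\lambda c^{\prime}(u_n)[u_n-u]-\int_{\R^2}K(|x|)f(u_n)(u_n-u)\,dx-\mu\int_{\R^2}g(|x|)|u_n|^{q-2}u_n(u_n-u)\,dx\to0.
\]
By Lemma \ref{weak} the last three terms each tend to $0$, so $(u_n,u_n-u)\to0$; since $u_n\rightharpoonup u$ forces $(u,u_n-u)\to0$ as well, we conclude $\|u_n-u\|^2=(u_n,u_n-u)-(u,u_n-u)\to0$, that is, $u_n\to u$ strongly in $X_r$.

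With strong convergence in hand I would pass to the limit in the weak formulation \eqref{solution}. For any $v\in C_{0,r}^\infty(\R^2)$ the linear part $(u_n,v)\to(u,v)$ by convergence in $X_r$; the Chern--Simons part $c^{\prime}(u_n)[v]\to c^{\prime}(u)[v]$ by the (multilinear) estimates underlying Lemma \ref{imbedding3}; and $\int_{\R^2}K(|x|)f(u_n)v\,dx$ and $\mu\int_{\R^2}g(|x|)|u_n|^{q-2}u_nv\,dx$ converge to their counterparts thanks to the Trudinger--Moser estimates of Lemmas \ref{nonlinearity1}--\ref{nonlinearity2} together with \eqref{fF}, \eqref{g} and the uniform bound of Lemma \ref{bounded}, exactly as in the first half of the proof of Lemma \ref{weak} but now with $v$ fixed. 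Hence $\langle J^{\prime}(u),v\rangle=0$ for all $v\in C_{0,r}^\infty(\R^2)$, so $u$ is a radial-weak solution of \eqref{mainequation1}. Moreover, since every term defining $J$ is continuous along the strongly convergent sequence $\{u_n\}$ (again by Lemma \ref{imbedding3}, \eqref{fF} and \eqref{g}), one gets $J(u)=\lim_{n\to\infty}J(u_n)=c\geq\eta>0=J(0)$, so $u\not\equiv0$; this is the desired nontrivial solution with positive energy.

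As for where the difficulty lies: the genuinely delicate point is recovering compactness for the critical exponential nonlinearity, which is exactly why the minimax level has to be pinned below the threshold $\frac{\pi}{3\alpha_0}(1+\frac{b_0}{2})$ (Lemma \ref{estimate}) and why $\mu$ must be small (Lemma \ref{bounded}); once those lemmas and Lemma \ref{weak} are available, the argument above is routine. A secondary subtlety, already noted after Theorem \ref{maintheorem3}, is that $J$ need not be well-defined or differentiable on all of $X$, so all computations must be performed within $X_r$, where Lemma \ref{imbedding3} guarantees $c(u)<+\infty$.
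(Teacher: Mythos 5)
Your proposal is correct and follows essentially the same route as the paper: take $\mu_0=\min\{\mu_1,\mu_2\}$, extract the bounded mountain-pass $(PS)_c$ sequence, use Lemma \ref{weak} to kill the nonlocal and nonlinear terms in $\langle J'(u_n)-J'(u),u_n-u\rangle$ (the paper) or equivalently in $\langle J'(u_n),u_n-u\rangle$ together with $(u,u_n-u)\to0$ (your version), conclude $u_n\to u$ strongly in $X_r$, and hence $J'(u)=0$ with $J(u)=c>0$. The extra paragraph you add on passing to the limit in the weak formulation is a harmless elaboration of what the paper leaves implicit after establishing strong convergence.
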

\begin{proof}
 Recalling the definition of $\mu_1$ introduced in Lemma \ref{geometry}-(i) and \eqref{bounded2},
set $\mu_0\triangleq\min\{\mu_1,\mu_2\}>0$,
 then all of the conclusions in Lemmas \ref{geometry} and \ref{bounded} remain true.
So, there exists a sequence $\{u_n\}\subset X_r$ verifying \eqref{Mountainpass2}.
Passing to a subsequence of $\{u_n\}$ if necessary, by Lemma \ref{bounded}, we derive
\[
\begin{gathered}
o(1)=\langle J^\prime(u_n)-J^\prime(u),u_n-u\rangle
 =\|u_n-u\|^2
+\lambda\big(c^\prime(u_n)[u_n-u]-c^\prime(u)[u_n-u]\big)\hfill\\
\ \ \ \ \ \ \ \ \ \ \ \ +\int_{\R^2}K(|x|)f(u_n)(u_n-u)dx
-\int_{\R^2}K(|x|)f(u)(u_n-u)dx\hfill\\
\ \ \ \ \ \ \ \ \ \ \ \ + \mu\int_{\R^2}g(|x|)|u_n|^{q-2}u_n(u_n-u) dx-\mu\int_{\R^2}g(|x|)|u|^{q-2}u(u_n-u) dx,\hfill\\
\end{gathered}
\]
which together with Lemma \ref{weak} indicates
 that $u_n\to u$ in $X_r$. Consequently,
 we proved that $J^\prime(u)=0$ and $J(u)=c>0$ by \eqref{Mountainpass1}. The proof is complete.
\end{proof}

To look for the second solution of
equation \eqref{mainequation1}, we need the following lemma.

\begin{lemma}\label{variational}
(Ekeland's variational principle \cite[Theorem 1.1]{Ekeland})
Let $E$ be a complete metric space and $H:E\to \R\cup\{+\infty\}$ be lower semicontinuous,
 bounded from below. Then for any $\epsilon>0$, there exists some point $v\in E$ with
$$
H(v)\leq \inf_EH+\epsilon,~ H(w)\geq H(v)-\epsilon d(v,w),~\forall  w\in E.
$$
\end{lemma}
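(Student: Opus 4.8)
The plan is to give a self-contained proof of Ekeland's variational principle via the standard partial-order argument, so as not to lean on an external reference. On $E$ introduce the relation
$w \preceq u$ if and only if $H(w) + \epsilon\, d(w,u) \le H(u)$. Using $d(w,u)+d(u,z)\ge d(w,z)$ together with $d(w,u)=0\Leftrightarrow w=u$, one checks that $\preceq$ is a partial order: reflexivity and antisymmetry are immediate, and transitivity follows by adding the defining inequalities for $w\preceq u$ and $u\preceq z$. The key observation is that the conclusion of the lemma is \emph{exactly} a minimality statement: if $v$ is such that $w\preceq v$ forces $w=v$, then no $w\ne v$ can satisfy $H(w)+\epsilon d(w,v)\le H(v)$, which is precisely $H(w)\ge H(v)-\epsilon d(v,w)$ for every $w\in E$. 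So the whole problem reduces to producing a $\preceq$-minimal point sitting below a cleverly chosen starting point.

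Next I would construct such a point by iteration. Since $H$ is bounded below, pick $u_1\in E$ with $H(u_1)\le \inf_E H+\epsilon$. Given $u_n$, set $T_n\triangleq\{w\in E: w\preceq u_n\}$; this set is nonempty (it contains $u_n$) and closed, because $w\mapsto H(w)+\epsilon d(w,u_n)$ is lower semicontinuous (sum of the l.s.c.\ function $H$ and the continuous function $\epsilon d(\cdot,u_n)$), so its sublevel set $T_n$ is closed. Choose $u_{n+1}\in T_n$ with $H(u_{n+1})\le\tfrac12\big(H(u_n)+\inf_{T_n}H\big)$. Then $u_{n+1}\preceq u_n$, hence $T_{n+1}\subseteq T_n$ by transitivity, and $a_n\triangleq H(u_n)-\inf_{T_n}H$ satisfies $a_{n+1}\le\tfrac12 a_n$, so $a_n\to 0$ (here one uses $\inf_{T_{n+1}}H\ge\inf_{T_n}H$ coming from $T_{n+1}\subseteq T_n$ together with the selection inequality for $u_{n+1}$). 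Since every $w\in T_{n+1}$ obeys $\epsilon d(w,u_{n+1})\le H(u_{n+1})-H(w)\le a_{n+1}$, the diameters of the nested closed sets $T_n$ tend to $0$.

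Finally, completeness of $E$ and Cantor's intersection theorem yield a unique point $v$ with $\bigcap_{n\ge 1}T_n=\{v\}$. From $v\in T_1$ we get $H(v)+\epsilon d(v,u_1)\le H(u_1)\le \inf_E H+\epsilon$, in particular $H(v)\le\inf_E H+\epsilon$, which is the first assertion. For the second, suppose some $w\in E$ satisfied $H(w)<H(v)-\epsilon d(v,w)$: if $w=v$ this is absurd, and if $w\ne v$ then $w\prec v$, hence $w\preceq u_n$ for every $n$ (using $v\preceq u_n$ and transitivity), so $w\in\bigcap_n T_n=\{v\}$, contradicting $w\ne v$. Therefore $H(w)\ge H(v)-\epsilon d(v,w)$ for all $w\in E$. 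I do not anticipate any genuine obstacle here — the result is classical; the only points requiring a little care are the verification that $\preceq$ is a partial order and the geometric decay $a_{n+1}\le\tfrac12 a_n$ that makes the sets $T_n$ shrink, after which completeness does the rest.
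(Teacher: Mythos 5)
Your argument is correct, and it is worth noting that the paper itself offers no proof of this lemma at all: it is stated purely as a citation of Ekeland's original article, to be used as a black box in Lemma 3.7 (\ref{2Mountainpass}). What you have written is the classical self-contained proof via the order relation $w\preceq u \Leftrightarrow H(w)+\epsilon d(w,u)\le H(u)$, the nested closed sets $T_n=\{w: w\preceq u_n\}$ with the bisection-type selection $H(u_{n+1})\le\tfrac12\big(H(u_n)+\inf_{T_n}H\big)$, the decay $a_{n+1}\le\tfrac12 a_n$ forcing $\operatorname{diam}T_n\to0$, and Cantor's intersection theorem; every step checks out (closedness of $T_n$ from lower semicontinuity, $T_{n+1}\subseteq T_n$ from transitivity, and the identification of the second conclusion with $\preceq$-minimality). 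Two small caveats, neither of which damages the proof: first, antisymmetry of $\preceq$ can fail at pairs of distinct points where $H=+\infty$ (both defining inequalities read $+\infty\le+\infty$), so $\preceq$ is in general only a preorder on $E$; this is harmless because your argument never uses antisymmetry, and all points entering the construction have finite $H$-values once $H(u_1)\le\inf_EH+\epsilon<+\infty$ (which tacitly assumes $H\not\equiv+\infty$, the degenerate case being trivial). Second, the existence of $u_{n+1}$ requires $\inf_{T_n}H>-\infty$, which is exactly where the hypothesis that $H$ is bounded from below enters. Since the paper supplies no proof to compare against, your contribution is strictly additional content rather than an alternative route.
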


Then, we can construct a (PS) sequence for the functional $J$ with negative energy.

\begin{lemma}\label{2Mountainpass}
Suppose that $(V,K)\in \mathcal{K}$, the nonlinearity $f$ satisfies \eqref{definition} and
 $(f_1)-(f_3)$, and $(g)$ hold.
Then there exists a (PS) sequence for the functional $J$ at the level below zero.
\end{lemma}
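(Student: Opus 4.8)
The plan is to produce a (PS) sequence sitting at the level
\[
c_0\;\triangleq\;\inf_{u\in \overline{B}_\rho}J(u),\qquad \overline{B}_\rho\triangleq\{u\in X_r:\|u\|\le\rho\},
\]
where $\rho>0$ is the radius furnished by Lemma \ref{geometry}(i). Since $X_r$ is a Hilbert space, $\overline{B}_\rho$ equipped with the distance induced by $\|\cdot\|$ is a complete metric space, and the estimate obtained in the proof of Lemma \ref{geometry}(i), namely $J(u)\ge\frac14\|u\|^q\big(\|u\|^{2-q}-C_1\|u\|^{s-q}-C_2\mu\big)$ for $\|u\|\le\rho$, shows $J$ is bounded from below on $\overline{B}_\rho$, so $c_0\in\R$.

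First I would check that $c_0<0$. Pick $u_0\in C_{0,r}^\infty(\R^2)$ with $\int_{\R^2}g(|x|)|u_0|^q\,dx>0$ (such a $u_0$ exists because $g\ge0$ and the weight $g$ is genuinely present). For $t>0$ so small that $t\|u_0\|_{L^\infty(\R^2)}\le1$, hypothesis $(f_3)$ forces $F(tu_0)\ge0$ pointwise, and since $c(u_0)\ge0$ we get
\[
J(tu_0)\le\frac{t^2}{2}\|u_0\|^2+\frac{\lambda t^6}{2}c(u_0)-\frac{\mu t^q}{q}\int_{\R^2}g(|x|)|u_0|^q\,dx .
\]
As $1\le q<2$, the last term dominates when $t\to0^+$, so $J(tu_0)<0$ for $t$ small; hence $c_0<0$, and in particular $c_0<\eta$ with $\eta>0$ from Lemma \ref{geometry}(i).

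Next I would invoke Ekeland's variational principle (Lemma \ref{variational}) applied to $J$ on $\overline{B}_\rho$: for each $n\in\mathbb{N}$ there is $v_n\in\overline{B}_\rho$ with
\[
J(v_n)\le c_0+\tfrac1n,\qquad J(w)\ge J(v_n)-\tfrac1n\|w-v_n\|\quad\forall\,w\in\overline{B}_\rho .
\]
Because $J(v_n)\to c_0<\eta$ while $J(u)\ge\eta$ on $\{u\in X_r:\|u\|=\rho\}$, it follows that $\|v_n\|<\rho$ for all large $n$, i.e. $v_n$ lies in the open ball. Then for any $\phi\in X_r$ with $\|\phi\|=1$ and $t>0$ small, $v_n+t\phi$ is still in the open ball, so the second Ekeland inequality gives $\tfrac{1}{t}\big(J(v_n+t\phi)-J(v_n)\big)\ge-\tfrac1n$; letting $t\to0^+$ and using that $J$ is Gateaux–differentiable at $v_n$ with derivative given by the right-hand side of \eqref{solution}, I would obtain $\langle J^\prime(v_n),\phi\rangle\ge-\tfrac1n$, and replacing $\phi$ by $-\phi$ yields $\|J^\prime(v_n)\|_{X_r^\ast}\le\tfrac1n\to0$. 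Hence $\{v_n\}$ is a (PS) sequence of $J$ at the level $c_0<0$.

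The step I expect to be the main obstacle is the legitimacy of letting $t\to0^+$ in the last computation: it requires $J$ to be well-defined and Gateaux–differentiable along every direction at each $v_n$ in the open ball, with derivative exactly \eqref{solution}. This is precisely why $\rho$ was taken no larger than $\Upsilon$ in Lemma \ref{geometry}; on that ball the critical exponential nonlinearity is controlled by the Trudinger–Moser estimates of Lemmas \ref{nonlinearity1}–\ref{nonlinearity2} (as in the derivation of \eqref{fF}), the Chern–Simons term $c(\cdot)$ is smooth by Lemma \ref{imbedding3} together with the estimates behind \eqref{weak3a}, and the sublinear term is handled by \eqref{g}. Everything else is the textbook argument underlying Ekeland's principle.
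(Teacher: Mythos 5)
Your proposal is correct and follows essentially the same route as the paper: take the infimum of $J$ over the closed ball $\overline{B}_\rho$ from Lemma \ref{geometry}(i), show it is negative by exploiting the $t^q$-scaling of the $g$-term as $t\to0^+$ (the paper computes $\lim_{\theta\to0}J(\theta\psi)/\theta^q=-\frac{\mu}{q}\int_{\R^2}g|\psi|^q\,dx<0$ for a nonnegative $\psi\in C^\infty_{0,r}(\R^2)$, which is the same observation as yours), and then apply Ekeland's variational principle on $\overline{B}_\rho$. The paper compresses the final derivative estimate into ``a standard procedure''; you have simply written out that procedure (interior position of the minimizing points via $J(v_n)<\eta$, one-sided difference quotients, and the $\pm\phi$ trick), so there is no substantive difference.
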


\begin{proof}
For $\rho>0$ given by Lemma \ref{geometry}-(i), we define
$$
\overline{B}_\rho=\{u\in X_r|\|u\|\leq \rho\},~ \partial B_\rho=\{u\in  X_r|\|u\|= \rho\}.
$$
Evidently, $\overline{B}_\rho$ is a complete metric space with the distance $d(u,v)\triangleq\|u-v\|$.
It is obvious that $J$ is lower semicontinuous and bounded from below on $\overline{B}_\rho$. We claim that
\begin{equation}\label{negativeminimum}
  \widetilde{c}\triangleq\inf\{J(u)|u\in \overline{B}_\rho\}<0.
\end{equation}
Indeed, choosing a nonnegative function $\psi\in C_{0,r}^{\infty}(\R^2)$, then by $(f_1)$, one has
\begin{equation*}
	%\label{independent}
  \lim_{\theta\to 0}\frac{J(\theta\psi)}{\theta^q}=-\frac{\mu}{q}\int_{\R^2}g(x)|\psi|^q dx<0.
\end{equation*}
Thus, there is a sufficiently small $t_\psi>0$ such that $\|t_\psi\psi\|\leq \rho$ and $J(t_\psi\psi)<0$, which imply
that \eqref{negativeminimum} holds. By Lemma \ref{variational}, for any $n\in \mathbb{N}$, there exists
a function $\widetilde{u}_n$ such that
\[
\widetilde{ c} \leq J(\widetilde{u}_n)\leq \widetilde{c}  +\frac{1}{n}
~ \text{and}~
  J(v)\geq J(\widetilde{u}_n)-\frac{1}{n}\|\widetilde{u}_n-v\|,~ \forall v\in  \overline{B}_\rho.
\]
Then, a standard procedure indicates
 that the sequence
 $\{\widetilde{u}_n\}$ is a bounded $(PS)_{\widetilde{c}}$ sequence of $J$. The proof is complete.
\end{proof}

If $\{\widetilde{u}_n\}\subset X_r$ is a (PS) sequence of $J$ at the level $\widetilde{c}<0$,
similar to Lemma \ref{bounded}, we derive the following lemma.

\begin{lemma}\label{2bounded}
Suppose that $(V,K)\in \mathcal{K}$, $f$ satisfies \eqref{definition} and
 $(f_1)-(f_3)$, and $(g)$ hold.
Let $\{\widetilde{u}_n\}\subset X_r$ be a $(PS)_{\widetilde{c}}$ sequence of $J$ with $\widetilde{c}<0$,
then there is a constant $\mu_3>0$ such that for all $\mu\in(0,\mu_3)$, there holds
\[
\limsup_{n\to\infty}\|\widetilde{u}_n\|^2<\frac{4\pi}{\alpha_0}(1+\frac{b_0}{2}).
\]
\end{lemma}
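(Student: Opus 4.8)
The plan is to mirror, almost word for word, the proof of Lemma \ref{bounded}, the key simplification being that the level is now \emph{negative}, so the technical input of Lemma \ref{estimate} is no longer needed.

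First I would establish boundedness of $\{\widetilde{u}_n\}$ in $X_r$. Exactly as in \eqref{bounded0}, one considers $J(\widetilde{u}_n)-\tfrac16\langle J'(\widetilde{u}_n),\widetilde{u}_n\rangle$: since $c$ is $6$-homogeneous the nonlocal Chern--Simons term cancels, $(f_2)$ gives $\int_{\R^2}K(|x|)\big(f(\widetilde{u}_n)\widetilde{u}_n-6F(\widetilde{u}_n)\big)dx\ge0$, and \eqref{g} together with Young's inequality applied exactly as in \eqref{bounded0} yields
\[
\widetilde{c}+o(1)\|\widetilde{u}_n\|\ \ge\ J(\widetilde{u}_n)-\tfrac16\langle J'(\widetilde{u}_n),\widetilde{u}_n\rangle\ \ge\ \tfrac16\|\widetilde{u}_n\|^2-\tfrac{(6-q)(2-q)}{12q}\Big(\tfrac{6-q}{2}\Big)^{\frac{q}{2-q}}(\mu C)^{\frac{2}{2-q}},
\]
with $C>0$ the constant of \eqref{g}; as $\widetilde{c}$ is fixed, $\{\widetilde{u}_n\}$ is bounded. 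Next, computing $6J(\widetilde{u}_n)-\langle J'(\widetilde{u}_n),\widetilde{u}_n\rangle$ and using $J(\widetilde{u}_n)\to\widetilde{c}$, $J'(\widetilde{u}_n)\to0$ and the boundedness just obtained, exactly as in \eqref{bounded1} one gets
\[
\limsup_{n\to\infty}\|\widetilde{u}_n\|^2\ \le\ 6\widetilde{c}+\tfrac{(6-q)(2-q)}{2q}\Big(\tfrac{6-q}{2}\Big)^{\frac{q}{2-q}}(\mu C)^{\frac{2}{2-q}}.
\]

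Here the argument is in fact easier than for Lemma \ref{bounded}: rather than invoking Lemma \ref{estimate} to control $6c$, one simply uses $\widetilde{c}<0$, so that the term $6\widetilde{c}<0$ can be discarded, leaving $\limsup_{n\to\infty}\|\widetilde{u}_n\|^2<\tfrac{(6-q)(2-q)}{2q}\big(\tfrac{6-q}{2}\big)^{q/(2-q)}(\mu C)^{2/(2-q)}$. It then suffices to choose $\mu_3$ so small that this last quantity is $\le\tfrac{4\pi}{\alpha_0}(1+\tfrac{b_0}{2})$; for instance, one may take
\[
\mu_3\triangleq C^{-1}\Big[\tfrac{8\pi q}{(6-q)(2-q)\alpha_0}\big(1+\tfrac{b_0}{2}\big)\Big]^{\frac{2-q}{2}}\Big(\tfrac{2}{6-q}\Big)^{\frac q2}>0,
\]
so that for every $\mu\in(0,\mu_3)$ one gets $\limsup_{n\to\infty}\|\widetilde{u}_n\|^2<\tfrac{4\pi}{\alpha_0}(1+\tfrac{b_0}{2})$, as claimed. (One could even take $\mu_3=\mu_2$, since already $\tfrac{(6-q)(2-q)}{2q}\big(\tfrac{6-q}{2}\big)^{q/(2-q)}(\mu_2 C)^{2/(2-q)}=\tfrac{2\pi}{\alpha_0}(1+\tfrac{b_0}{2})<\tfrac{4\pi}{\alpha_0}(1+\tfrac{b_0}{2})$.)

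I do not expect any real obstacle: the delicate mountain-pass energy estimate of Lemma \ref{estimate}, which was the crux of Lemma \ref{bounded}, plays no role once the level is negative. The only points needing a little care are keeping track of the explicit constant in the Young's inequality step so that $\mu_3$ can be written out, and checking once more that the $6$-homogeneity of $c$ makes the Chern--Simons contribution vanish in $6J-\langle J',\cdot\rangle$, precisely as in \eqref{bounded0}--\eqref{bounded1}.
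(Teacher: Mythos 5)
Your proof is correct and follows essentially the same route as the paper: both start from $J(\widetilde u_n)-\tfrac16\langle J'(\widetilde u_n),\widetilde u_n\rangle$, use $(f_2)$ together with the $6$-homogeneity of $c(\cdot)$ and \eqref{g}, and exploit $\widetilde c<0$ in place of the mountain-pass estimate of Lemma \ref{estimate}. The only cosmetic difference is that the paper skips the Young's inequality step and instead solves $\tfrac13\|\widetilde u_n\|^2\le \tfrac{\mu(6-q)C}{6q}\|\widetilde u_n\|^q+o(1)\|\widetilde u_n\|$ directly for $\|\widetilde u_n\|$, which produces a slightly different (but equally admissible) explicit value of $\mu_3$.
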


\begin{proof}
By using $(f_2)$ and \eqref{g},
\[
\frac{1}{3}\|\widetilde{u}_n \|^2- \frac{\mu(6-q)}{6q}C\|\widetilde{u}_n\|^q
\leq J( \widetilde{u}_n )
-\frac{1}{6} \langle J^\prime( \widetilde{u}_n ),
 \widetilde{u}_n \rangle,
\]
which together with $\{\widetilde{u}_n\}\subset X_r$ is a (PS) sequence of $J$ at the level $\widetilde{c}<0$
gives that
\[
\limsup_{n\to\infty}\| \widetilde{u}_n \|^2
\leq \bigg(\frac{\mu(6-q)C}{2q}\bigg)^{\frac{1}{2-q}}.
\]
Hence, we can define
\begin{equation}\label{2bounded1}
\mu_3\triangleq  \frac{2q}{(6-q)C}\bigg(\frac{4\pi}{\alpha_0}(1+\frac{b_0}{2})\bigg)^{2-q}>0,
\end{equation}
then we obtain the desired result. The proof is complete.
\end{proof}

We establish the existence of solutions with negative energy for equation \eqref{mainequation1}.

\begin{lemma}\label{secondsolution}
Suppose that $(V,K)\in \mathcal{K}$, $f$ satisfies \eqref{definition} and
 $(f_1)-(f_3)$, and $(g)$ hold. Then there exists a constant $\mu^0>0$ such that equation \eqref{mainequation1}
admits at least a nontrivial solution with negative energy for any $\lambda>0$ and $\mu\in(0,\mu^0)$.
\end{lemma}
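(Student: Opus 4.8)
The plan is to mirror the argument of Lemma~\ref{firstsolution}, with the mountain-pass sequence at level $c>0$ replaced by the Ekeland sequence at level $\widetilde c<0$ produced in Lemma~\ref{2Mountainpass}. First I would set $\mu^0\triangleq\min\{\mu_1,\mu_3\}>0$, where $\mu_1$ comes from Lemma~\ref{geometry}-(i) (so that the ball $\overline B_\rho$ on which Ekeland's principle is applied is well-defined and $J|_{\partial B_\rho}\geq\eta>0$) and $\mu_3$ comes from Lemma~\ref{2bounded}. Fix $\lambda>0$ and $\mu\in(0,\mu^0)$. By Lemma~\ref{2Mountainpass} there is a bounded $(PS)_{\widetilde c}$ sequence $\{\widetilde u_n\}\subset\overline B_\rho$; since $\widetilde c<0<\eta\le\inf_{\partial B_\rho}J$, this sequence eventually stays in the open ball, so it is a genuine Palais--Smale sequence for the free functional $J$ on $X_r$.

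Next I would record the compactness information. By Lemma~\ref{2bounded}, $\limsup_{n\to\infty}\|\widetilde u_n\|^2<\frac{4\pi}{\alpha_0}(1+\frac{b_0}{2})$, hence, up to a subsequence, $\widetilde u_n\rightharpoonup u$ in $X_r$, $\widetilde u_n\to u$ in $L^s_K(\R^2)$ for every $s\in[2,+\infty)$ by Lemma~\ref{imbedding2}, and $\widetilde u_n\to u$ a.e.\ in $\R^2$; moreover $u\in\overline B_\rho$ by weak lower semicontinuity of $\|\cdot\|$. The strict bound on $\limsup\|\widetilde u_n\|^2$ is exactly what is needed to rerun verbatim the argument of Lemma~\ref{weak} (choose $r_1,r_2>1$ with $\frac1{r_1}+\frac1{r_2}=1$, $r_2$ close to $1$ and $\overline r_2>r_2$ with $\sup_n\alpha\overline r_2\|\widetilde u_n\|^2<4\pi(1+\frac{b_0}{2})$, then invoke Lemmas~\ref{nonlinearity2}, \ref{imbedding1}, \ref{imbedding3} and \eqref{g}), giving
\[
\lim_{n\to\infty}\int_{\R^2}K(|x|)f(\widetilde u_n)(\widetilde u_n-u)\,dx=\lim_{n\to\infty}\int_{\R^2}K(|x|)f(u)(\widetilde u_n-u)\,dx=0,
\]
$\lim_n c'(\widetilde u_n)[\widetilde u_n-u]=\lim_n c'(u)[\widetilde u_n-u]=0$, and the corresponding limits for the terms involving $g(|x|)|\cdot|^{q-2}(\cdot)$.

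Then I would conclude exactly as in Lemma~\ref{firstsolution}: expanding
\[
o(1)=\langle J'(\widetilde u_n)-J'(u),\widetilde u_n-u\rangle=\|\widetilde u_n-u\|^2+\lambda\big(c'(\widetilde u_n)[\widetilde u_n-u]-c'(u)[\widetilde u_n-u]\big)+R_n,
\]
where $R_n$ collects the $K(|x|)f$ and $\mu g(|x|)|\cdot|^{q-2}(\cdot)$ contributions, and using the limits above forces $\|\widetilde u_n-u\|\to0$, i.e.\ $\widetilde u_n\to u$ in $X_r$. Continuity of $J$ and $J'$ then yields $J'(u)=0$ and $J(u)=\widetilde c<0=J(0)$, so $u$ is a nontrivial critical point of $J$ with negative energy, which is the sought solution of \eqref{mainequation1} for all $\lambda>0$ and $\mu\in(0,\mu^0)$.

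The only genuinely delicate point is the one already isolated in Lemma~\ref{2bounded}: one must know that the negativity of the Palais--Smale level $\widetilde c<0$, together with the smallness of $\mu$, keeps $\limsup\|\widetilde u_n\|^2$ strictly below the Trudinger--Moser threshold $\frac{4\pi}{\alpha_0}(1+\frac{b_0}{2})$, for otherwise the exponential nonlinearity could concentrate and destroy the convergence $\int_{\R^2}K(|x|)f(\widetilde u_n)(\widetilde u_n-u)\,dx\to0$. Here this is automatic, since the inequality obtained from $J(\widetilde u_n)-\tfrac16\langle J'(\widetilde u_n),\widetilde u_n\rangle$ has a small right-hand side when $\widetilde c<0$, so the threshold is respected as soon as $\mu<\mu_3$; everything else is routine bookkeeping copied from Lemmas~\ref{weak} and~\ref{firstsolution}.
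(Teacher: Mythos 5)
Your proposal is correct and follows essentially the same route as the paper: take $\mu^0=\min\{\mu_1,\mu_3\}$, use the Ekeland sequence from Lemma \ref{2Mountainpass} at level $\widetilde c<0$, invoke Lemma \ref{2bounded} to stay below the Trudinger--Moser threshold, and then repeat the convergence argument of Lemmas \ref{weak} and \ref{firstsolution} to get strong convergence to a critical point with $J(\widetilde u)=\widetilde c<0$. The extra remark that $\widetilde c<0<\eta\le\inf_{\partial B_\rho}J$ forces the Ekeland sequence into the open ball is a useful clarification the paper leaves implicit, but it does not change the argument.
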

\begin{proof}
Set $\mu^0\triangleq\min\{\mu_1,\mu_3\}>0$, where $\mu_1>0$ comes from Lemma \ref{geometry}-(i) and $\mu_3>0$ is defined by \eqref{2bounded1},
respectively. It follows from Lemma \ref{2Mountainpass} and \eqref{negativeminimum}
 that there exists a sequence $\{\widetilde{u}_n\}\subset X_r$ such that
\[
J(\widetilde{u}_n)\to \widetilde{c}<0,~ J^{\prime}(\widetilde{u}_n)\to 0~  \text{as}~ n\to \infty.
\]
According to Lemma \ref{2bounded}, up to a subsequence if necessary,
 there exists a  function $\widetilde{u}\in X_r$ such that $\widetilde{u}_n\rightharpoonup \widetilde{u}$
 in $X_r$, $\widetilde{u}_n\to\widetilde{u}$ in $L^s_K(\R^2)$ with $s\in[2,+\infty)$
 and $\widetilde{u}_n\to \widetilde{u}$ a.e. in $\R^2$. By
using the similar arguments in Lemma \ref{firstsolution}, we can conclude
 that $\widetilde{u}_n\to \widetilde{u}$ in $X_r$. Therefore, we have
 obtained that $J^\prime(\widetilde{u})=0$ and $J(\widetilde{u})=\widetilde{c}<0$.
The proof is complete.
\end{proof}

Nowt, we are in a position to finish the proof of Theorem \ref{maintheorem1}.

\begin{proof}[\textbf{\emph{Proof of Theorem \ref{maintheorem1}}}]
Set $\mu_*\triangleq\min\{\mu_0,\mu^0\}>0$, it follows from Lemmas \ref{firstsolution} and \ref{secondsolution} that \eqref{mainequation1} has two nontrivial
 solutions $u$ and $\widetilde{u}$ for any $\lambda>0$ and $\mu\in(0,\mu_*)$. On the other hand, the fact $J(\widetilde{u})<0<J(u)$ shows
that $u$ and $\widetilde{u}$ are two different solutions of equation
\eqref{mainequation1}. The proof is complete.
\end{proof}

Next, let's focus on the proof of Theorem \ref{maintheorem2}. In consideration of
the process of the proof of Theorem \ref{maintheorem1}, the essential
difference is then how to enforce the same estimate on
the mountain-pass value \eqref{Mountainpass1} when $(f_3)$ is replaced with $(f_4)$ and $(f_5)$.
To this aim, for a fixed constant $r_0\in(0,1]$, we consider the Moser sequence defined by
\[
\overline{w}_n(x)\triangleq \frac{1}{\sqrt{2\pi}}
\left\{
  \begin{array}{ll}
    \sqrt{\log n}, & \text{if}~0\leq |x|\leq \frac{r_0}{n}, \vspace{2mm}\\
    \frac{\log(\frac{r_0}{|x|})}{\sqrt{\log n}}, & \text{if}~ \frac{r_0}{n}
    <|x|\leq r_0, \vspace{2mm}\\
    0, & \text{if}~|x|>r_0,
  \end{array}
\right.
\]
see \cite{Bezerra1,Bezerra3,AY,Dong} for example. Since $\supp \overline{w}_n\subset B_{r_0}(0)$,
it's simple to check that $\{\overline{w}_n\}\subset X_r$ if $(V_0)$ holds.
What's more, we can derive the following lemma.

\begin{lemma}\label{Moser}
Suppose that $(V_0)$ holds and let $M_1\triangleq\sup_{r\in(0,1]}V(r)/r^{a_0}\in(0,+\infty)$.
Then $\|\overline{w}_n\|^2\leq 1+\delta_n$ with
$\delta_n>0$ and $\delta_n\log n\to  2M_1 r_0^{a_0+2}(a_0+2)^{-3}$ as $n\to
\infty$.
In particular, $c(\overline{w}_n)$ defined in \eqref{gaugepart} goes to $0$ as $n\to
\infty$.
\end{lemma}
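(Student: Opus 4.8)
The plan is to expand $\|\overline{w}_n\|^2=\int_{\R^2}|\nabla\overline{w}_n|^2\,dx+\int_{\R^2}V(|x|)|\overline{w}_n|^2\,dx$ and estimate the two pieces separately. For the Dirichlet part, a direct computation in polar coordinates on the annulus $\{r_0/n<|x|<r_0\}$, where $\overline{w}_n(x)=(2\pi\log n)^{-1/2}(\log r_0-\log|x|)$ and hence $|\nabla\overline{w}_n|^2=(2\pi\log n)^{-1}|x|^{-2}$, gives
\[
\int_{\R^2}|\nabla\overline{w}_n|^2\,dx=\frac{1}{\log n}\int_{r_0/n}^{r_0}\frac{dr}{r}=1,
\]
the classical normalization of the Moser sequence. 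Since $r_0\le1$ we have $\supp\overline{w}_n\subset B_{r_0}(0)\subseteq B_1(0)$, and $(V_0)$ together with the definition of $M_1$ gives $0<V(r)\le M_1r^{a_0}$ on $(0,1]$; it is therefore natural to put
\[
\delta_n\triangleq M_1\int_{\R^2}|x|^{a_0}|\overline{w}_n|^2\,dx>0 ,
\]
so that $\|\overline{w}_n\|^2=1+\int_{\R^2}V(|x|)|\overline{w}_n|^2\,dx\le1+\delta_n$.

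Next I would compute $\lim_{n\to\infty}\delta_n\log n$. Splitting $\int_{\R^2}|x|^{a_0}|\overline{w}_n|^2\,dx$ over the inner disk $B_{r_0/n}(0)$ and the annulus $\{r_0/n<|x|<r_0\}$, the inner disk contributes $M_1(\log n)(r_0/n)^{a_0+2}/(a_0+2)$, which after multiplication by $\log n$ is $O\big((\log n)^2n^{-(a_0+2)}\big)=o(1)$ thanks to $a_0>-2$. On the annulus, passing to polar coordinates and substituting $t=r_0/|x|$ turns the contribution into
\[
\frac{M_1r_0^{a_0+2}}{\log n}\int_1^n t^{-(a_0+3)}(\log t)^2\,dt .
\]
Writing $s\triangleq a_0+2>0$ and using $u=\log t$ one gets $\int_1^\infty t^{-(a_0+3)}(\log t)^2\,dt=\int_0^\infty u^2e^{-su}\,du=2/s^3=2(a_0+2)^{-3}$, while the tail $\int_n^\infty$ is $o(1)$. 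Hence multiplying the annulus contribution by $\log n$ yields $2M_1r_0^{a_0+2}(a_0+2)^{-3}$, and adding the (negligible) inner-disk term gives $\delta_n\log n\to2M_1r_0^{a_0+2}(a_0+2)^{-3}$; in particular $\delta_n\to0$.

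For the last assertion I would first show, by the same two-region computation (now using $\int_0^{r_0}r(\log(r_0/r))^p\,dr=r_0^2\int_0^\infty u^pe^{-2u}\,du<\infty$ on the annulus and an elementary bound on $B_{r_0/n}(0)$), that $\|\overline{w}_n\|_{L^p(\R^2)}\to0$ for every finite $p\ge1$. On the other hand, since $\overline{w}_n$ is supported in $B_{r_0}(0)$ the factor $u^2$ kills the region $\R^2\setminus B_{r_0}(0)$ in \eqref{gaugepart}, and applying \eqref{imbedding3b} exactly as in the proof of Lemma \ref{imbedding3} one obtains, for any $u\in X_r$ supported in a ball $B_{r_0}(0)$,
\[
c(u)=\int_{B_{r_0}(0)}\frac{u^2}{|x|^2}\Big(\int_0^{|x|}\tfrac{r}{2}u^2(r)\,dr\Big)^2dx\le\frac{1}{16\pi}\|u\|_{L^2}^2\,\|u\|_{L^4}^4 .
\]
Taking $u=\overline{w}_n$ and invoking $\|\overline{w}_n\|_{L^2}^2\to0$ (with $\|\overline{w}_n\|_{L^4}^4$ bounded) yields $c(\overline{w}_n)\to0$.

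The normalization of the gradient term and the $L^p$ estimates are routine; the step that needs care is the evaluation of $\delta_n\log n$, namely verifying that the inner-disk term and the tail $\int_n^\infty t^{-(a_0+3)}(\log t)^2\,dt$ are genuinely $o(1/\log n)$ and $o(1)$, respectively, and that the limiting constant comes out exactly as $2M_1r_0^{a_0+2}(a_0+2)^{-3}$. This is precisely where $a_0>-2$ is used, both to guarantee convergence of $\int_1^\infty t^{-(a_0+3)}(\log t)^2\,dt$ and to ensure the decay $n^{-(a_0+2)}(\log n)^2\to0$.
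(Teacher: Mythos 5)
Your proposal is correct and follows essentially the same route as the paper: normalize the gradient term to $1$, bound the potential term by $M_1\int|x|^{a_0}|\overline{w}_n|^2\,dx$ split over the inner disk and the annulus, reduce the annulus integral to $\int_0^\infty u^2e^{-(a_0+2)u}\,du=2(a_0+2)^{-3}$, and control $c(\overline{w}_n)$ via the estimate $c(u)\le\frac{1}{16\pi}\|u\|_{L^2}^2\|u\|_{L^4}^4$ together with $\overline{w}_n\to0$ in $L^2$. The only cosmetic differences are your change of variables $t=r_0/|x|$ in place of $t=\log(r_0/r)$ and your direct computation of the $L^4$ norm where the paper invokes the embedding of Lemma \ref{imbedding1}.
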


\begin{proof}
The proof is mainly inspired by
\cite{Bezerra1,Bezerra3,AY,Lam,Dong}, we sketch it here for the convenience of the interested reader.
Obviously, $|\nabla\log(r_0/|x|)|^2=1/|x|^2$, then
\begin{equation}\label{Moser0a}
\int_{\R^2}|\nabla\overline{w}_n|^2dx=\frac{1}{2\pi\log n}\int_{B_{r_0}(0)\backslash B_{r_0/n}(0)}
\frac{1}{|x|^2}dx=\frac{1}{ \log n}\int^{r_0}_{r_0/n} \frac{1}{r}dr=1.
\end{equation}
By means of the polar coordinate formula,
we can infer from $(V_0)$ that
\begin{align}\label{Moser0b}
\nonumber  \int_{\R^2}V(|x|)| \overline{w}_n|^2dx& =\int_{B_{r_0 }(0)}V(|x|)| \overline{w}_n|^2dx
\leq M_1\int_{B_{r_0 }(0)} |x|^{a_0} |\overline{w}_n|^2dx \\
\nonumber     &= M_1\int_{B_{r_0/n }(0)} |x|^{a_0} |\overline{w}_n|^2dx
   +M_1\int_{B_{r_0 }(0)\backslash B_{r_0/n }(0)} |x|^{a_0} |\overline{w}_n|^2dx\\
     &=\frac{M_1r_0^{a_0+2}\log n}{(a_0+2)n^{a_0+2}}+\frac{M_1}{\log n}\int^{r_0}_{r_0/n}
   \log^2\bigg(\frac{r_0}{r}\bigg) r^{a_0+1}dr \triangleq\delta_n.
\end{align}
As a direct consequence of \eqref{Moser0a} and \eqref{Moser0b}, one derives that
$\|\overline{w}_n\|^2\leq 1+\delta_n$ for all $n\in \mathbb{N}$, with $\delta_n>0$.
 Moreover, by some elementary computations, there holds
 \begin{align*}
  \delta_n & =\frac{M_1r_0^{a_0+2}\log n}{(a_0+2)n^{a_0+2}}+\frac{M_1}{\log n}\int^{r_0}_{r_0/n}
   \log^2\bigg(\frac{r_0}{r}\bigg) r^{a_0+1}dr \\
     & =\frac{M_1r_0^{a_0+2}\log n}{(a_0+2)n^{a_0+2}}+\frac{M_1r_0^{a_0+2}}{\log n}\int^{\log n}_{ 0 }
   t^2e^{-(a_0+2)t}dt\\
   &=\frac{M_1r_0^{a_0+2}\log n}{(a_0+2)n^{a_0+2}}-\frac{M_1r_0^{a_0+2}}{\log n}
   \bigg(\frac{t^2}{a_0+2}+\frac{2t}{(a_0+2)^2}+\frac{2}{(a_0+2)^3}\bigg)e^{-(a_0+2)t}\bigg|_0^{\log n}\\
   &=\frac{2M_1r_0^{a_0+2}}{(a_0+2)^3\log n}- \frac{2M_1r_0^{a_0+2} }{(a_0+2)^3n^{a_0+2}\log n}-
   \frac{2M_1r_0^{a_0+2}}{(a_0+2)^2n^{a_0+2} }
 \end{align*}
 indicating that $\delta_n\log n\to2M_1r_0^{a_0+2}/(a_0+2)^{3}$ as $n\to\infty$.

Next, we verify that $c(\overline{w}_n)\to0$ as $n\to\infty$.
Let's claim that $\overline{w}_n\to 0$ in $L^2(\R^2)$. Indeed,
since $\supp \overline{w}_n\subset B_{r_0}(0)$, one has
\[
\int_{\R^2}\overline{w}_n^2dx=
 \int_{B_{r_0}(0)\backslash B_{r_0/n}(0)}\overline{w}_n^2dx
+\int_{B_{r_0/n}(0)}\overline{w}_n^2dx\triangleq  \Sigma_n^1+\Sigma_n^2.
\]
By some elementary computations, we have
\begin{align}\label{Moser1}
\nonumber  \Sigma_n^1 &=\frac{1}{2\pi\log n}
\int_{B_{r_0}(0)\backslash B_{r_0/n}(0)} \log^2\bigg(\frac{r_0}{|x|}\bigg)dx
=\frac{1}{ \log n}
\int^{r_0}_{r_0/n} \log^2\bigg(\frac{r_0}{r}\bigg)rdr\\
 \nonumber   &=\frac{r_0^2}{ \log n}
\int_{0}^{\log n} t^2e^{-2t} dt=
-\frac{r_0^2}{4 \log n}\frac{2t^2+2t+1}{e^{2t}}\bigg|_{0}^{\log n}\\
&=\frac{r_0^2}{4 \log n}\bigg(1-\frac{2\log^2 n+2\log n+1}{n^2}\bigg) \to0
~\text{as}~n\to\infty
\end{align}
and
\begin{equation}\label{Moser2}
\Sigma_n^2 =\frac{\log n}{2\pi}\int_{B_{r_0/n}(0)} dx
=\frac{r_0^2\log n}{2n^2}\to0~\text{as}~n\to\infty.
\end{equation}
Combing \eqref{Moser1}-\eqref{Moser2}, we can conclude that
$\overline{w}_n\to 0$ in $L^2(\R^2)$ as $n\to\infty$.
Recalling that \eqref{imbedding3b}, we utilize Lemma \ref{imbedding1} to infer that
\[
c(\overline{w}_n)\leq \frac{1}{16\pi}\int_{B_{r_0}(0)}\overline{w}_n^2dx
\int_{B_{r_0}(0)}\overline{w}_n^4dx\leq \frac{C\|\overline{w}_n\|^4}{16\pi}\int_{\R^2}\overline{w}_n^2dx
\leq \frac{C(1+\delta_n)^2}{16\pi}\int_{\R^2}\overline{w}_n^2dx
\]
showing the desired result. The proof of this lemma is finished.
\end{proof}

\begin{lemma}\label{2estimate}
Suppose that $(V,K)\in \mathcal{K}$, $f$ satisfies \eqref{definition}, $(f_1)-(f_2)$ and
$(f_4)-(f_5)$, and (g) hold. If we also suppose that $\liminf_{r\to0^+}K(r)/r^{(b_0-22)/12}>0$,
then $c<\frac{\pi}{3\alpha_0}(1+\frac{b_0}{2})$.
\end{lemma}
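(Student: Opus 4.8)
The plan is to mimic the strategy of Lemma \ref{estimate}, but replacing the explicit cut-off competitor $\varphi_0$ by a rescaled Moser sequence $\overline{w}_n$ from Lemma \ref{Moser}, and exploiting the exponential lower bound on $F$ furnished by $(f_5)$ to beat the critical threshold. First I would fix $r_0\in(0,1]$ small (to be chosen), form the path $\gamma_n(t)=t\,\overline{w}_n$, and estimate
\[
J(t\overline{w}_n)\le \frac{t^2}{2}\|\overline{w}_n\|^2+\frac{\lambda t^6}{2}c(\overline{w}_n)-\int_{\R^2}K(|x|)F(t\overline{w}_n)\,dx,
\]
using $g\ge 0$ from $(g)$ to discard the $\mu$-term. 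By Lemma \ref{Moser} we have $\|\overline{w}_n\|^2\le 1+\delta_n$ with $\delta_n\log n\to 2M_1 r_0^{a_0+2}(a_0+2)^{-3}$ and $c(\overline{w}_n)\to 0$. Suppose, for contradiction, that $c\ge \frac{\pi}{3\alpha_0}(1+\frac{b_0}{2})$; then for every $n$ the max of $J(t\overline{w}_n)$ over $t\ge 0$ is at least this value. Letting $t_n>0$ denote a maximizer, a standard argument (differentiating in $t$) shows $t_n^2\|\overline{w}_n\|^2 \ge \int K(|x|)\,f(t_n\overline{w}_n)\,t_n\overline{w}_n\,dx\ge 6\int K(|x|)F(t_n\overline{w}_n)\,dx$ after using $(f_2)$; combined with the assumed lower bound on $c$ this forces $t_n^2 \ge \frac{2\pi}{3\alpha_0}(1+\frac{b_0}{2})\big/(1+\delta_n) + o(1)$, i.e. $t_n^2$ is bounded below away from zero, and in fact one derives $\liminf_n t_n^2 \ge \frac{2\pi}{3\alpha_0}(1+\frac{b_0}{2})$.

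Next I would plug this lower bound into the exponential growth of $F$. On the core $B_{r_0/n}(0)$ we have $\overline{w}_n=\sqrt{\log n}/\sqrt{2\pi}$, so $t_n\overline{w}_n\to+\infty$, and $(f_5)$ gives $F(t_n\overline{w}_n)\ge (\beta_0-o(1))\exp(\alpha_0 t_n^2\overline{w}_n^2)=(\beta_0-o(1))\,n^{\alpha_0 t_n^2/(2\pi)}$. Using the hypothesis $\liminf_{r\to0^+}K(r)/r^{(b_0-22)/12}>0$, which encodes exactly how mildly $K$ may vanish (or blow up) at the origin — here it ensures $\int_{B_{r_0/n}(0)}K(|x|)\,dx \gtrsim (r_0/n)^{2+(b_0-22)/12}$ up to constants — we get
\[
\int_{\R^2}K(|x|)F(t_n\overline{w}_n)\,dx \;\ge\; C\, n^{\alpha_0 t_n^2/(2\pi)}\,(r_0/n)^{2+(b_0-22)/12}+o(1).
\]
On the other hand, from the definition of the maximizer and the bound on $c$, the quantity $\frac{t_n^2}{2}\|\overline{w}_n\|^2+\frac{\lambda t_n^6}{2}c(\overline{w}_n) - \int K F(t_n\overline{w}_n)\,dx = J(t_n\overline{w}_n)$ must stay bounded (in fact $\le$ something finite), while $\frac{t_n^2}{2}(1+\delta_n)$ is bounded and $c(\overline{w}_n)\to 0$; hence $\int K F(t_n\overline{w}_n)\,dx$ is bounded above uniformly in $n$. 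Comparing the two estimates forces
\[
\frac{\alpha_0 t_n^2}{2\pi} \;\le\; 2+\frac{b_0-22}{12} + o(1),
\]
i.e. $\alpha_0 t_n^2 \le 2\pi\big(\frac{b_0+2}{12}\big)+o(1)=\frac{\pi(b_0+2)}{6}+o(1)$, which contradicts the lower bound $\liminf_n \alpha_0 t_n^2 \ge \frac{2\pi}{3}(1+\frac{b_0}{2})=\frac{\pi(b_0+2)}{3}$ since $\frac{\pi(b_0+2)}{3}>\frac{\pi(b_0+2)}{6}$. This contradiction proves $c<\frac{\pi}{3\alpha_0}(1+\frac{b_0}{2})$.

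The main obstacle I expect is the bookkeeping in the middle region $B_{r_0}(0)\setminus B_{r_0/n}(0)$: one must check that the contribution of $F(t_n\overline{w}_n)$ there, together with the error terms $\delta_n\log n$ and $c(\overline{w}_n)$, does not spoil the clean comparison above — in particular one needs the term $\frac{t_n^2}{2}\delta_n$ (of order $1/\log n$) and the $n^{-(a_0+2)}$ tails from Lemma \ref{Moser} to be genuinely negligible against the polynomial-in-$n$ gain from the core, which is where the precise exponent $(b_0-22)/12$ and the admissible ranges $a_0>-2$, $b_0>-2$ must be used carefully. A secondary delicate point is justifying that a maximizer $t_n$ exists and that $t_n\overline{w}_n$ is large on the core — this uses $(f_2)$ (superlinearity beyond degree six, so $F(t)/t^6\to+\infty$) exactly as in Lemma \ref{geometry}(ii), together with $(f_4)$ if one needs a quantitative rate; I would cross-check whether $(f_4)$ is actually needed here or only later in the Palais–Smale analysis, and streamline accordingly.
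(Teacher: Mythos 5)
Your proposal follows essentially the same strategy as the paper's proof: test $J$ along the Moser sequence, argue by contradiction at the level $\frac{\pi}{3\alpha_0}(1+\frac{b_0}{2})$, derive $\liminf t_n^2\ge\frac{2\pi}{3\alpha_0}(1+\frac{b_0}{2})$ from the assumed level bound, estimate the nonlinear term from below on the core $B_{r_0/n}(0)$ using $(f_5)$ together with $K(|x|)\gtrsim|x|^{(b_0-22)/12}$ (so that $\int_{B_{r_0/n}(0)}K\,dx\gtrsim(r_0/n)^{(b_0+2)/12}$), and conclude from $n^{\alpha_0t_n^2/(2\pi)-(b_0+2)/12}$ being bounded that $\alpha_0t_0^2/(2\pi)\le(b_0+2)/12$, contradicting $\alpha_0t_0^2/(2\pi)\ge(b_0+2)/6$. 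One genuine difference: the paper runs the core estimate through the stationarity identity $t_n^2+3\lambda t_n^6c(w_n)=\int K f(t_nw_n)t_nw_n\,dx$, and this is exactly where $(f_4)$ is needed — to convert the $(f_5)$ lower bound on $F$ into the pointwise bound $f(t)t\ge M_0^{-1}(\beta_0-\epsilon)t^{\vartheta+1}e^{\alpha_0t^2}$; your route, bounding $\int KF$ directly from the value of the functional, indeed bypasses $(f_4)$ in this step, which answers the question you raised at the end.

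There are, however, two slips. First, the displayed inequality $t_n^2\|\overline{w}_n\|^2\ge\int K f(t_n\overline{w}_n)t_n\overline{w}_n\,dx$ is backwards: stationarity gives $\int Kf(t_n\overline{w}_n)t_n\overline{w}_n\,dx=t_n^2\|\overline{w}_n\|^2+3\lambda t_n^6c(\overline{w}_n)-\mu t_n^q\int g|\overline{w}_n|^q\,dx$, so after discarding the nonnegative Chern--Simons term the inequality points the other way. This is harmless only because that chain is never used downstream; the lower bound on $t_0^2$ actually comes from the level inequality plus $F\ge0$ and $c(\overline{w}_n)\to0$. Second, and more substantively, you assert that $J(t_n\overline{w}_n)$, and hence $\frac{t_n^2}{2}(1+\delta_n)$, ``must stay bounded'' before this has been established; under the contradiction hypothesis you only know $J(t_n\overline{w}_n)\ge\frac{\pi}{3\alpha_0}(1+\frac{b_0}{2})$, which is a lower bound, not an upper bound, and a priori $t_n$ could diverge. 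The fix is the one the paper uses: the level lower bound itself yields $\frac{t_n^2}{2}(1+\delta_n)+\frac{\lambda t_n^6}{2}c(\overline{w}_n)\ge\int KF(t_n\overline{w}_n)\,dx\ge C\,n^{\alpha_0t_n^2/(2\pi)-(b_0+2)/12}$, and comparing the polynomial left side with the super-exponential right side first forces $\{t_n\}$ to be bounded, after which the exponent comparison goes through. So the boundedness of $t_n$ must be \emph{extracted from} the core estimate, not assumed before it; with that reordering your argument closes.
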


\begin{proof}
Let's define $w_n=\overline{w}_n/\sqrt{1+\delta_n}$,
then $\|w_n\|\leq 1$ and $c(w_n)=c(\overline{w}_n)/(1+\delta_n)^3 \to0$ by Lemma \ref{Moser}.
By $(g)$, to end with
 the proof, it's enough to show that there exists some $n_0\in \mathbb{N}$ such that
\[
\max_{t\geq0}\bigg\{\frac{t^2}{2}+\frac{\lambda t^6}{2}c(w_{n_0})
-\int_{\R^2}K(|x|)F(tw_{n_0})dx\bigg\}<\frac{\pi}{3\alpha_0}(1+\frac{b_0}{2}).
\]
Indeed, we can chose a sufficiently large $t_0>0$ to satisfy $\|t_0w_{n_0}\|>\rho$
and $J(t_0w_{n_0})<0$ by Lemma \ref{geometry}, then $\gamma_0(t)=tt_0w_{n_0}\in\Gamma$. Since $g(x)\geq0$
in $(g)$, one has
\begin{align*}
  c &=\inf_{\gamma\in \Gamma}\max_{t\in[0,1]}J(\gamma(t))\leq \max_{t\in[0,1]}J(\gamma_0(t))
\leq \max_{t\geq0}J(tw_{n_0}) \\
    & \leq \max_{t\geq0}\bigg\{\frac{t^2}{2}+\frac{\lambda t^6}{2}c(w_{n_0})
-\int_{\R^2}K(|x|)F(tw_{n_0})dx\bigg\}.
\end{align*}
On the contrary, suppose that for all $n\in \mathbb{N}$, there is a constant $t_n>0$
 such that
 \begin{equation}\label{2estimate1}
 \frac{t_n^2}{2}+\frac{\lambda t_n^6}{2}c(w_{n})
-\int_{\R^2}K(|x|)F(t_nw_{n})dx\geq \frac{\pi}{3\alpha_0}(1+\frac{b_0}{2}).
 \end{equation}
Moveover, it's obvious to check that
 \begin{equation}\label{2estimate2}
 t_n^2+3\lambda t_n^6c(w_{n})=\int_{\R^2}K(|x|)f(t_nw_{n})t_nw_{n}dx.
 \end{equation}
From $(f_4)$ and $(f_5)$, for all $\epsilon\in(0,\beta_0)$, there exists a constant
$R_\epsilon=R(\epsilon)>0$ such that
\[
 f(t)t \geq M_0^{-1}(\beta_0-\epsilon)t^{\vartheta+1}e^{\alpha_0|t|^2},~\forall t
\geq R_\epsilon.
\]
By the additional assumption of $K(r)$, there exist constants $C>0$ and $\overline{r}_0$ such that
\[
K(|x|)\geq C|x|^{(b_0-22)/12},~\forall x\in B_{\overline{r}_0}(0).
\]
Thanks to \eqref{2estimate1}, $\{t_n\}$ is bounded below by some positive constant.
Since $B_{r_0/n}(0)\subset B_{\overline{r}_0}(0)$ for some sufficiently large $n\in \mathbb{N}$,
one deduces that $t_nw_n\geq R_\epsilon$ on $B_{r_0/n}(0)$. Then, on one hand, by \eqref{2estimate2},
we can obtain that
\begin{align}\label{2estimate3}
\nonumber  t_n^2 +3\lambda  t_n^6 c(w_{n})  &  \geq C M_0^{-1}(\beta_0-\epsilon)(t_nw_n)^{\vartheta+1}e^{\alpha_0|t_nw_n|^2}\int_{B_{r_0/n}(0)}
  |x|^{(b_0-22)/12}dx \\
   & \geq \frac{24\pi C(\beta_0-\epsilon)}{M_0 (b_0+2)}  t_n^{\vartheta+1}
   \bigg(\frac{\log n}{2\pi(1+\delta_n)}\bigg)^{\frac{\vartheta+1}{2}}
   \exp\bigg(\frac{\alpha_0t_n^2\log n}{2\pi(1+\delta_n)}\bigg)\bigg(\frac{r_0}{n}\bigg)^{\frac{b_0+2}{12}}
\end{align}
which, together with the fact that $c(w_n)\leq C\|w_n\|^6\leq C<+\infty$,
implies that $\{t_n\}$ is uniformly bounded in $n\in \mathbb{N}$.
Up to a subsequence if necessary, there is a constant $t_0\in[0,+\infty)$
 such that $t_n\to t_0$.

On the other hand, using $(f_1)$, $c(w_n)\to0$ and \eqref{2estimate1}, we have
\begin{equation}\label{2estimate4}
 t_0^2\geq \frac{2\pi}{3\alpha_0}(1+\frac{b_0}{2}).
\end{equation}
Taking  $\epsilon=\beta_0/2$ in \eqref{2estimate3} and applying \eqref{2estimate4}, we also obtain
\begin{align*}
  (1-\vartheta)\log t_0+o(1) & \geq C+C\log(\log n)+C\bigg(\frac{\alpha_0}{2\pi}t_0^2-\frac{b_0+2}{12}\bigg)
  \log n+ o(\log n)  \\
   &\geq C+C\log(\log n)+\frac{C}{12}(b_0+2)\log n+o( \log n ),
\end{align*}
 yields a contradiction since $b_0>-2$ in $(K_0)$. The proof is complete.
\end{proof}

With Lemma \ref{2estimate} in mind, we can finish the proof of Theorem \ref{maintheorem2}.

\begin{proof}[\textbf{\emph{Proof of Theorem \ref{maintheorem2}}}]
Proceeding as Lemmas \ref{firstsolution} and \ref{secondsolution},
where Lemma \ref{estimate} is replaced by Lemma \ref{2estimate}, we can conclude that
there exists a constant $\mu_{**}>0$ such that
equation  \eqref{mainequation1}
 has two different nontrivial solutions for any $\lambda>0$ and $\mu\in(0,\mu_{**})$. The proof is complete.
\end{proof}

\section{Proof of Theorem \ref{maintheorem3}}
In this section, we mainly discuss the existence of infinitely many solutions for equation \eqref{mainequation1}.
For this purpose, we shall exploit the new symmetric mountain-pass theorem
developed by Kajikiya \cite{Kajikiya}. For simplicity, we'll always suppose that all of the assumptions in
Theorem \ref{maintheorem3} are satisfied in this section.

Now,
let's recall some notations with respect to the Krasnoselskii's genus theory in \cite{Krasnoselskii} for the
sake of completeness and reader's convenience. Suppose $E$ to be a Banach space and we
 denote by $\Sigma$ the class of all closed subsets $A\subset E\backslash \{0\}$ that are symmetric
corresponding to the origin,
that is, $u\in A$ implies that $-u\in A$.

\begin{definition}
If $A\in \Sigma$, the Krasnoselskii's genus $\gamma(A)$ of $A$ is defined by the least positive
integer $n$ such that there is an odd mapping $\varphi\in C(A,\R^n)$ such that $\varphi(x)\neq0$ for any
$x\in A$. If $n$ does not exist, we set $\gamma(A)=\infty$. Furthermore, we set $\gamma(\emptyset)= 0$.
\end{definition}

In the following, we will bring in some necessary properties of the genus for the proof of Theorem \ref{maintheorem3}
and the complete introduction to it can be found in e.g. \cite{Krasnoselskii,Rabinowitz,Kajikiya}.

\begin{proposition}\label{gamma}
Let $A$ and $B$ be closed symmetric subsets of $E$ which do not contain the origin. Then
the following properties are true:
\begin{itemize}
  \item [(1)] If there exists an odd continuous mapping from $A$ to $B$, then $\gamma(A)\leq \gamma(B)$;
\item [(2)] If there is an odd homeomorphism from $A$ to $B$, then $\gamma(A)=\gamma(B)$;
\item [(3)] If $A\subset B$, then $\gamma(A)\leq\gamma(B)$;
\item [(4)] $\gamma(A\cup B)\leq \gamma(A)+\gamma(B)$;
\item [(5)] If $\gamma(B)<\infty$, then $\gamma(\overline{A\backslash B})\geq \gamma(A)-\gamma(B)$;
\item [(6)] If $A$ is compact, then $\gamma(A)<\infty$ and there exists a constant $\delta> 0$ such that $N_\delta(A)\subset \Sigma$ and
 $\gamma(N_\delta(A))=\gamma(A)$, with $N_\delta(A)= \{x\in E| \text{dist}(x,A)\leq\delta\}$;
\item [(7)] The n-dimensional sphere $\mathbb{S}_n$ has a genus of $n$ by the Borsuk-Ulam Theorem.
 \end{itemize}
\end{proposition}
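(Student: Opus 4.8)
The plan is to reduce every assertion to the single structural fact behind the definition: one has $\gamma(A)\le n$ precisely when $A$ admits an odd continuous map into $\R^n\setminus\{0\}$, and odd continuous maps compose to odd continuous maps. Accordingly I would first record two elementary tools. The first is \emph{functoriality}: if $f:A\to B$ is odd and continuous and $\psi:B\to\R^n\setminus\{0\}$ realizes $\gamma(B)=n$, then $\psi\circ f:A\to\R^n\setminus\{0\}$ is odd and continuous, since $\psi(f(-x))=\psi(-f(x))=-\psi(f(x))$; this gives $\gamma(A)\le\gamma(B)$, which is exactly (1). Then (2) follows by applying (1) to an odd homeomorphism and its inverse, and (3) follows by taking $f$ to be the inclusion $A\hookrightarrow B$. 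The second tool is an \emph{odd extension lemma}: any odd continuous $\varphi:A\to\R^n$ on a closed symmetric set extends to an odd continuous $\widetilde\varphi:E\to\R^n$. I would obtain it by extending each component via Tietze's theorem to some continuous $\Phi:E\to\R^n$ and then antisymmetrizing, replacing $\Phi$ by $x\mapsto\tfrac{1}{2}(\Phi(x)-\Phi(-x))$, which is odd and still agrees with $\varphi$ on $A$ because $\varphi$ is already odd there.

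With the extension lemma in hand, (4) is the central construction. Assuming $\gamma(A)=m$ and $\gamma(B)=n$ are finite (otherwise the inequality is vacuous), I would take odd nonvanishing realizers $\varphi:A\to\R^m\setminus\{0\}$ and $\psi:B\to\R^n\setminus\{0\}$, extend them to odd continuous $\widetilde\varphi:E\to\R^m$ and $\widetilde\psi:E\to\R^n$, and form $\Phi=(\widetilde\varphi,\widetilde\psi):A\cup B\to\R^{m+n}$. This map is odd and continuous, and it cannot vanish on $A\cup B$: at a point of $A$ its first block equals $\varphi\ne0$, and at a point of $B$ its second block equals $\psi\ne0$. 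Hence $\gamma(A\cup B)\le m+n$. Property (5) then follows purely formally: since $A\subset\overline{A\setminus B}\cup B$, combining (3) and (4) gives $\gamma(A)\le\gamma(\overline{A\setminus B})+\gamma(B)$, and rearranging yields the claim. Here I would only need to check that $\overline{A\setminus B}\in\Sigma$, i.e. that it is closed, symmetric, and misses the origin, the last point because $\mathrm{dist}(0,A)>0$ for the closed set $A$ with $0\notin A$, so no point of $\overline{A\setminus B}\subset A\cup(\text{limit points of }A)$ can be $0$.

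For (6) I would argue in two steps. Finiteness of $\gamma(A)$ for compact $A$: for each $x\in A$ pick, by Hahn--Banach, a functional $f_x\in E^*$ with $f_x(x)>0$ and an open neighbourhood $U_x$ on which $f_x$ stays bounded away from $0$; the sets $U_x$ together with their antipodes $-U_x$ cover $A$, so by compactness finitely many $U_{x_1},\dots,U_{x_m}$ and their reflections suffice, and $x\mapsto(f_{x_1}(x),\dots,f_{x_m}(x))$ is odd, since each $f_{x_i}$ is linear, and nonvanishing on $A$, whence $\gamma(A)\le m$. For the neighbourhood statement I would extend an odd realizer $\varphi$ of $\gamma(A)=n$ to an odd continuous $\widetilde\varphi$ on $E$; since $\widetilde\varphi\ne0$ on the compact set $A$, continuity gives $\delta>0$ with $\widetilde\varphi\ne0$ on $N_\delta(A)$, and shrinking $\delta$ keeps $0\notin N_\delta(A)$. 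Then $N_\delta(A)\in\Sigma$ and $\gamma(N_\delta(A))\le n$, while $A\subset N_\delta(A)$ forces the reverse inequality by (3), giving equality. Finally, for (7) I would take $\mathbb{S}_n$ to be the unit sphere of $\R^n$, so that $n$ is the value asserted: the inclusion $\mathbb{S}_n\hookrightarrow\R^n\setminus\{0\}$ is odd and nonvanishing, so $\gamma(\mathbb{S}_n)\le n$; for the reverse I would suppose $\gamma(\mathbb{S}_n)\le n-1$, view the resulting odd nonvanishing $\varphi:\mathbb{S}_n\to\R^{n-1}\setminus\{0\}$ as a continuous map into $\R^{n-1}$, and invoke Borsuk--Ulam, which produces an antipodal pair with $\varphi(x)=\varphi(-x)$; oddness then gives $\varphi(x)=-\varphi(x)$, hence $\varphi(x)=0$, a contradiction.

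The only genuinely non-formal ingredients are the odd extension lemma, which drives both (4) and (6), and the lower bound in (7). I expect the latter to be the conceptual heart: the delicate point is converting the combination of oddness and nonvanishing into a contradiction through the Borsuk--Ulam theorem, the classical topological input that actually pins the genus of a sphere from below. Everything else is bookkeeping in composing, concatenating, and extending odd maps, so the write-up should proceed quickly once these two steps are in place.
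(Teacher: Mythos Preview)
Your proposal is correct and complete. The paper itself does not prove this proposition at all: it merely states the properties and refers the reader to the standard references \cite{Krasnoselskii,Rabinowitz,Kajikiya} for the details. Your argument---reducing (1)--(3) to composition of odd maps, proving (4) via the odd Tietze extension and concatenation, deducing (5) formally from (3) and (4), handling (6) by Hahn--Banach plus compactness for finiteness and by extending a realizer for the neighbourhood stability, and pinning (7) through Borsuk--Ulam---is exactly the classical proof one finds in those references, so there is nothing to compare beyond noting that you have supplied what the paper omits.
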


Clearly, the variational functional $J$ is not bounded from below in $X_r$. In fact, for
any $u\in X_r \backslash\{0\}$, arguing as the proof of Lemma \ref{geometry}-(ii), one has
\[
J(tu)\leq \frac{t^2}{2}\|u\|^2+ \lambda C t^6 \|u\|^6- \int_{\R^2}K(|x|)F(tu)dx\to
-\infty~ \text{as}~ t\to+\infty.
\]
Motivated by \cite{Azorero}, we introduce a truncated functional $\mathcal{J}$ (see \eqref{newfunctional} below)
which is bounded from below in $X_r$ and verifies that all critical points $u$ of $J$ with $J(u)<0$ are critical
points of $\mathcal{J}$.
To get around the obstacle caused by the general nonlinearity with critical exponential growth in equation \eqref{mainequation1},
we split the discussions
of any $u\in X_r$ into the following two cases.

\textbf{Case 1:} $\|u\|\leq \Upsilon$, where $\Upsilon>0$ comes from \eqref{fF}.\\
Let $\epsilon=1/4$ in \eqref{fF}, by
\eqref{g}, we have
\begin{equation}\label{Case1aa}
J(u)\geq \frac{1}{4}\|u\|^2-C_0\|u\|^{s}-\frac{\mu}{q}C\|u\|^q,
\end{equation}
where $C_0=C_0(\alpha,b_0,s)>0$ is a constant and $s>2$ is a constant given by \eqref{fF}. Set
\[
\sigma(t)\triangleq \frac{1}{4}t^2-C_0t^{s}-\frac{\mu}{q}Ct^q,~ \forall~t\geq0,
\]
then we derive
\begin{equation*}
	%\label{Case1a}
 J(u)\geq \sigma(\|u\|)~ \text{for all}~  u\in X_r ~ \text{with}~ \|u\|\leq\Upsilon.
\end{equation*}
Since $q<2<s$, there exists a constant $\mu^{00}\in(0,\mu^0]$ such that $\sigma(t)$ possesses two unique zero points
$0<T_0(\mu)<T_1(\mu)$ for every $\mu\in(0,\mu^{00})$. We claim that $\lim_{\mu\to0^+}T_0(\mu)=0$. In fact,
it follows from $\sigma(T_i(\mu))=0$ for $i=0,1$ and $\sigma^\prime(T_0(\mu))>0>\sigma^\prime(T_1(\mu))$ that
\begin{equation}\label{Case1b}
 \frac{1}{4}T_i^2(\mu)-C_0T_i^{s}(\mu) -\frac{\mu}{q}CT_i^q(\mu)=0,
\end{equation}
and
\begin{equation}\label{Case1c}
\left\{
  \begin{array}{ll}
   \frac{1}{2}T_0^2(\mu) -s C_0T_0^{s}(\mu)- \mu CT_0^q(\mu) >0,\vspace{2mm}\\
   \frac{1}{2}T_1^2(\mu) -s C_0T_1^{s}(\mu) - \mu CT_1^q(\mu) <0.
  \end{array}
\right.
 \end{equation}
Eliminating the term ${\mu}CT_i^q(\mu)/q$ in the combing of
  \eqref{Case1b} and \eqref{Case1c}, we can obtain
\begin{equation}\label{mumu}
T_0(\mu)\leq \bigg[\frac{2-q}{4C_0(s-q)}\bigg]^{\frac{1}{ s-2  }}\leq T_1(\mu),
\end{equation}
which indicates that $T_0(\mu)$ is uniformly bounded with respect to $\mu$ since $C_0$ is independent of $\mu$.
Fix any sequence $\{\mu_n\}\subset (0,+\infty)$ with $\lim_{n\to\infty}\mu_n=0$ and suppose that
$T_0(\mu_n)\to T_0$ as $n\to\infty$. Letting $n\to\infty$ in \eqref{Case1b} and the first inequality in
 \eqref{Case1c}, respectively, we derive
\[
 \frac{1}{4}T_0^2-C_0T_0^{s}=0
~  \text{and} ~
 \frac{1}{2}T_0^2-s C_0T_0^{s}\geq0.
\]
which yields that
\[
\frac{2-s}{4}T_0^2\geq0.
\]
Because $s>2$ in \eqref{fF}, we derive $T_0=0$. By the arbitrariness of
$\{\mu_n\}$ with $\lim_{n\to\infty}\mu_n=0$, we can conclude that the claim is true.
Consequently, there exists a sufficiently small $\mu_4>0$ such that $T_0(\mu)<\Upsilon$
for each $\mu\in(0,\mu_4)$ and then
$T_0(\mu)<\min\{\Upsilon,T_1(\mu)\}$. Inspired by \cite{Azorero}, we can take a cut-off
function $\Psi(t)\in C_0^\infty(\R)$ satisfies $\Psi(t)\in[0,1]$ for any $t\geq0$ and
\[
\Psi(t)=
\left\{
\begin{array}{ll}
1, & \text{if}~ t\in[0,T_0(\mu)] \\
0, &\text{if}~ t\in[\min\{\Upsilon,T_1(\mu)\},+\infty).
\end{array}
\right.
\]
Then we define the following auxiliary functional
\begin{align}\label{newfunctional}
\mathcal{J}(u)&\triangleq\frac{1}{2}\|u\|^2+\frac{\lambda}{2}c(u)
 - \Psi(\|u\|) \int_{\R^2}K(|x|)F(u)dx
-\frac{\mu}{q}\int_{\R^2}g(|x|)|u|^qdx.
\end{align}
One can easily verify that $\mathcal{J}\in C^1(X_r,\R)$ and similar to \eqref{Case1aa},
\begin{equation}\label{Case1d}
\mathcal{J}(u)\geq \overline{\sigma}(\|u\|) ~\text{for all}~
 u\in X_r ~\text{with}~ \|u\|\leq\Upsilon,
\end{equation}
where $\overline{\sigma}:[0,+\infty)\to\R$ is defined by
\[
\overline{\sigma}(t)\triangleq\frac{1}{4}t^2-C_0\Psi(t)t^{s}-\frac{\mu}{q}Ct^q.
\]
Obviously, $\overline{\sigma}(t)\geq \sigma(t)$ for every $t\geq0$. By the definitions of
$J$ and $\mathcal{J}$, $J(u)=\mathcal{J}(u)$ for all $u\in X_r$ with $\|u\|\leq T_0(\mu)<\min\{\Upsilon,T_1(\mu)\}$.
Thereby, if $u\in X_r$ is a critical point of $\mathcal{J}$ with $\mathcal{J}(u)<0$ and $\|u\|\leq T_0(\mu)$, then
$u$ is also a critical point of $J$. To show that $\|u\|\leq T_0(\mu)$, it is necessary to make sure that $\mathcal{J}(u)\geq0$
for every $u\in X_r$ with $\|u\|\geq\Upsilon$. Next, we shall consider the other case.

\textbf{Case 2:} $\|u\|>\Upsilon$.\\
Notice that $\Psi(\|u\|)\equiv0$ in this case, then by \eqref{g}, we have
\begin{align*}
\mathcal{J}(u) &=\frac{1}{2}\|u\|^2+\frac{\lambda}{2}
\int_{\R^2}\frac{u^2}{|x|^2}\bigg(\int_{0}^{|x|}\frac{r}{2}u^2(r)dr\bigg)^2dx
-\frac{\mu}{q}\int_{\R^2}g(|x|)|u|^qdx   \\
  & \geq  \frac{1}{2}\|u\|^2-\frac{\mu}{q}C\|u\|^q \\
  &\triangleq \xi(\|u\|),
\end{align*}
where $\xi:[0,+\infty)\to\R$ is defined by
\[
\xi(t)=\frac{1}{2}t^2-\frac{\mu}{q}Ct^q.
\]
It is easy to compute that
\[
\min_{t\geq0}\xi(t)=\frac{q-2}{2q}\big(\mu C\big)^{\frac{2}{2-q}}<0.
\]
Obviously, $\xi(t)\geq 0$ if and only if $t\geq t_0\triangleq (2\mu C/q)^{1/(2-q)}$. So, it suffices to
chose $t_0\leq \Upsilon$ to ensure that $\mathcal{J}(u)\geq0$ for all $\|u\|\geq\Upsilon$, that is,
$\mu<q\Upsilon^{2-q}/(2C)\triangleq \mu_6$.

\begin{lemma}\label{criticalpoint}
There exists a constant $\mu_{**}^*>0$ such that for
any $\mu\in(0,\mu_{**}^*)$, we have the following results:\\
\emph{(i)} if $\mathcal{J}(u)<0$, then $\|u\|<T_0(\mu)$ and $\mathcal{J}(v)=J(v)$ for $v$ in a small
neighborhood of $u$;\\
\emph{(ii)} $\mathcal{J}$ satisfies a local $(PS)_d$ condition for all $d<0$.
\end{lemma}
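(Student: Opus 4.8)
The plan is to use the explicit shape of the cut-off $\Psi$ together with the two lower bounds already recorded, namely $\mathcal J(u)\ge\overline\sigma(\|u\|)$ for $\|u\|\le\Upsilon$ (see \eqref{Case1d}) and $\mathcal J(u)\ge\xi(\|u\|)$ for $\|u\|\ge\Upsilon$ (Case 2). First I would fix $\mu_{**}^*>0$ so small that $\mu\in(0,\mu_{**}^*)$ guarantees simultaneously: $T_0(\mu),T_1(\mu)$ exist (i.e. $\mu<\mu^{00}$); $T_0(\mu)<\min\{\Upsilon,T_1(\mu)\}$ (i.e. $\mu<\mu_4$); $t_0=(2\mu C/q)^{1/(2-q)}\le\Upsilon$ (i.e. $\mu<\mu_6$); and $(4\mu C/q)^{1/(2-q)}\le T_1(\mu)$, which is admissible because \eqref{mumu} bounds $T_1(\mu)$ from below by a positive constant independent of $\mu$, whereas the left-hand side tends to $0$ as $\mu\to0^+$.

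Under this choice, the key claim for part (i) is that $\mathcal J(u)\ge0$ whenever $\|u\|\ge T_0(\mu)$. Indeed, on $[T_0(\mu),\min\{\Upsilon,T_1(\mu)\}]$ one has $\overline\sigma(t)\ge\sigma(t)\ge0$, because $\Psi\le1$ and $\sigma\ge0$ precisely on $[T_0(\mu),T_1(\mu)]$; on $[\min\{\Upsilon,T_1(\mu)\},\Upsilon]$, which is nonempty only if $T_1(\mu)<\Upsilon$, one has $\Psi\equiv0$, so $\overline\sigma(t)=\tfrac14 t^2-\tfrac{\mu}{q}Ct^q\ge0$ thanks to $t\ge T_1(\mu)\ge(4\mu C/q)^{1/(2-q)}$; and for $\|u\|\ge\Upsilon$, $\mathcal J(u)\ge\xi(\|u\|)\ge0$ since $\Upsilon\ge t_0$. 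Hence $\mathcal J(u)<0$ forces $\|u\|<T_0(\mu)$, and since $\Psi\equiv1$ on $[0,T_0(\mu)]$ the ball $B_{T_0(\mu)-\|u\|}(u)$ is contained in $\{v\in X_r:\|v\|<T_0(\mu)\}$, on which $\mathcal J$ coincides with $J$; this proves (i).

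For part (ii), let $\{u_n\}\subset X_r$ be a $(PS)_d$ sequence of $\mathcal J$ with $d<0$. Since $\mathcal J(u)\ge\xi(\|u\|)=\tfrac12\|u\|^2-\tfrac{\mu}{q}C\|u\|^q\to+\infty$ as $\|u\|\to\infty$, the functional $\mathcal J$ is coercive, so $\{u_n\}$ is bounded. From $\mathcal J(u_n)\to d<0$ we obtain $\mathcal J(u_n)<0$ for all large $n$, hence $\|u_n\|<T_0(\mu)$ by part (i), and on a small ball around each such $u_n$ one has $\mathcal J\equiv J$, so $\mathcal J'(u_n)=J'(u_n)$; therefore the tail of $\{u_n\}$ is a bounded $(PS)_d$ sequence of $J$. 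Moreover $\limsup_{n\to\infty}\|u_n\|^2\le T_0(\mu)^2<\Upsilon^2$, and since \eqref{fF} yields $\Upsilon^2<\tfrac{4\pi}{\alpha\overline r_2}(1+\tfrac{b_0}{2})<\tfrac{4\pi}{\alpha_0}(1+\tfrac{b_0}{2})$, the bound $\limsup_{n\to\infty}\|u_n\|^2<\tfrac{4\pi}{\alpha_0}(1+\tfrac{b_0}{2})$ required in Lemma \ref{weak} holds, with room to spare. I would then repeat verbatim the compactness argument of Lemmas \ref{weak} and \ref{firstsolution}: passing to a subsequence, $u_n\rightharpoonup u$ in $X_r$, $u_n\to u$ in $L^s_K(\R^2)$ and a.e., and writing $o(1)=\langle J'(u_n)-J'(u),u_n-u\rangle=\|u_n-u\|^2+\lambda\bigl(c'(u_n)[u_n-u]-c'(u)[u_n-u]\bigr)+\cdots$, all terms but $\|u_n-u\|^2$ tend to $0$ by Lemma \ref{weak}, so $u_n\to u$ in $X_r$; this gives the local $(PS)_d$ condition for every $d<0$.

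The main obstacle lies entirely in part (i): one has to calibrate $\mu_{**}^*$ so that the four thresholds $T_0(\mu)$, $t_0$, $T_1(\mu)$ and the fixed number $\Upsilon$ are interleaved in such a way that $\overline\sigma$ stays nonnegative across the whole transition band $[T_0(\mu),\Upsilon]$, the delicate case being $T_1(\mu)<\Upsilon$, where the $t^s$ contribution has already been switched off by $\Psi$. Once that ordering is secured, part (ii) reduces to a routine combination of coercivity of $\mathcal J$, the localization from part (i), and the compactness machinery already developed for $J$.
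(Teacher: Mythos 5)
Your argument is correct and follows essentially the same route as the paper: calibrate $\mu_{**}^*$ so that $T_0(\mu)<\min\{\Upsilon,T_1(\mu)\}$ and the transition band $[T_0(\mu),\Upsilon]$ carries a nonnegative lower bound for $\mathcal{J}$, then localize the $(PS)_d$ sequence inside $\{\|u\|<T_0(\mu)\}$ where $\mathcal{J}\equiv J$ and invoke the compactness machinery of Lemmas \ref{weak}--\ref{firstsolution}. The only (immaterial) difference is that on the interval $[\min\{\Upsilon,T_1(\mu)\},\Upsilon]$ you use $\overline\sigma\geq0$ via the calibration $(4\mu C/q)^{1/(2-q)}\leq T_1(\mu)$, whereas the paper argues by contradiction using $\mathcal{J}\geq\xi\geq0$ together with $T_1(\mu)>t_0$; both rest on the same lower bound \eqref{mumu} for $T_1(\mu)$.
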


\begin{proof}
It is easy to see that
\[
\sigma(t)= t^q\bigg
(\frac{1}{4}t^{2-q}-C_0t^{s-q}-\frac{\mu}{q}C\bigg)=0,~ \forall~t\geq0,
\]
has two unique nonzero roots for any
\[
0<\mu<\mu^{00}\triangleq\min\bigg\{\mu^0, \frac{q(s-2)}{4(s-q)C}
\bigg(\frac{2-q}{4C_0(s-q)}\bigg)^{\frac{2-q}{s-2}}   \bigg\},
\]
where $\mu^0>0$ is defined by Lemma \ref{secondsolution}, $s>2$ is a constant given by \eqref{fF},
$1\leq q<2$, $C_0=C_0(\alpha,b_0,s)>0$ and $C>0$ are constants. Because we have
verified that $\lim_{\mu\to0^+}T_0(\mu)=0$, there exists a constant $\mu_4>0$ such that $T_0(\mu)<\min\{\Upsilon,T_1(\mu)\}$ for all
 $\mu\in(0,\mu_4)$. In view of the Case 2 and \eqref{mumu}, we can deduce that $T_1(\mu)>t_0$ for any
\[
0<\mu<\mu_5\triangleq \frac{q}{2C}\bigg[\frac{2-q}{4C_0(s-q)}\bigg]^{\frac{2-q}{ s-2}}.
\]
The Case 2 indicates that $\mathcal{J}(u)\geq0$ for every $\|u\|\geq\Upsilon$ whenever
$0<\mu<q\Upsilon^{2-q}/(2C)\triangleq \mu_6$. Set $\mu_{**}^*=\min\{\mu^{00},\mu_4,\mu_5,\mu_6\}>0$,
then all the conclusions in the above two cases are true for any $\lambda>0$ and $\mu\in(0,\mu_{**}^*)$. Next,
 we give the proof of the lemma.

(i) For all $\mu\in(0,\mu_{**}^*)$, $\mathcal{J}(u)< 0$ implies that $\|u\|< \Upsilon$.
It follows from \eqref{Case1d} that $\sigma(\|u\|)\leq \overline{\sigma}(\|u\|)\leq \mathcal{J}(u)<0$.
The definition of $\sigma(t)$ reveals us that
either $\|u\|<T_0(\mu)$ or $T_1(\mu)<\|u\|<\Upsilon$, because $T_1(\mu)\geq \Upsilon$ immediately yields that $\|u\|<T_0(\mu)$.
Arguing it indirectly and we can suppose that $T_1(\mu)<\|u\|<\Upsilon$, then we get that $\mathcal{J}(u)\geq
\xi(\|u\|)\geq0$ since $\|u\|>T_1(\mu)>t_0$, a contradiction. Therefore, we derive
$\|u\|<T_0(\mu)$ and $\mathcal{J}(v)=J(v)$ for any $v\in X_r$ satisfying $\|v-u\|<T_0(\mu)-\|u\|$.

(ii) For every $\mu\in(0,\mu_{**}^*)$, let $\{u_n\}\subset X_r$ be any sequence such that
$\mathcal{J}(u_n)\to d<0$ and $\mathcal{J}^{\prime}(u_n)\to 0$. Therefore, for sufficiently large $n\in \mathbb{N}$,
one gets $J(u_n)=\mathcal{J}(u_n)\to d<0$ and $J^{\prime}(u_n)=\mathcal{J}^{\prime}(u_n)\to 0$. By
means of a similar argument in Lemma \ref{secondsolution},
$\{u_n\}$ has a strongly convergent subsequence. The proof is complete.
\end{proof}

In order to construct the suitable minimax sequence of negative critical values for $\mathcal{J}$, we need a finite dimensional
subsequence of $X_r$. Since $X_r$ is a separable and reflexive Hilbert space, there exists an orthogonal basis
$\{e_i\}_{i=1}^\infty$ for $X_r$. Hence, for every
$n\in \mathbb{N}$, we can set
$E_n\triangleq\text{span}\{e_1,e_2,\cdots,e_n\}$ and $Z_n\triangleq\oplus_{i=1}^nE_n$.
On the other hand, for any $\epsilon>0$, we define
\[
\mathcal{J}^{-\epsilon}\triangleq\{u\in X_r|  \mathcal{J}(u)\leq -\epsilon\}.
\]

\begin{lemma}\label{gamman}
For any $\mu>0$ and $n\in \mathbb{N}$, there exists $\epsilon_n>0$ such that
$\gamma(\mathcal{J}^{-\epsilon_n})\geq n$.
\end{lemma}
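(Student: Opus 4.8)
The plan is to show that on each finite-dimensional subspace $E_n$ the functional $\mathcal J$ takes strictly negative values on a small sphere, and then to exploit the monotonicity of the genus together with property (2) of Proposition \ref{gamma} (odd homeomorphisms preserve genus). The key point is that all norms on the finite-dimensional space $E_n$ are equivalent, so on $E_n$ the critical-growth term $\int_{\R^2}K(|x|)F(u)\,dx$ and the subcritical term $\int_{\R^2}g(|x|)|u|^q\,dx$ can be estimated by powers of $\|u\|$ with explicit exponents.

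First I would fix $n\in\mathbb N$ and restrict attention to $E_n\subset X_r$. Using $(f_3)$ together with $(f_1)$ one has a lower bound $F(t)\ge \kappa t^p - C t^2$ type estimate on a suitable range (or more simply, since on $E_n$ we only need a lower bound for small $u$, we use $(f_1)$ to get $F(t)\ge -\epsilon t^2$ and instead extract negativity from the term $-\frac{\mu}{q}\int g|u|^q\,dx$, which dominates for small $\|u\|$ since $q<2$). Concretely, for $u\in E_n$ with $\|u\|\le T_0(\mu)$ we have $\mathcal J(u)=J(u)$ and, choosing a nonnegative $\psi_n\in E_n\setminus\{0\}$, by homogeneity considerations as in the proof of Lemma \ref{2Mountainpass},
\[
\mathcal J(t\psi_n)=J(t\psi_n)\le \frac{t^2}{2}\|\psi_n\|^2+\frac{\lambda}{2}t^6 c(\psi_n)-\frac{\mu}{q}t^q\int_{\R^2}g(|x|)|\psi_n|^q\,dx
\]
for $t$ small enough that $\|t\psi_n\|\le T_0(\mu)$. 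Since $q<2<6$, the last term dominates as $t\to0^+$, so there is $t_n>0$ small with $\mathcal J(t_n\psi_n)<0$.

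Next I would upgrade this pointwise negativity to negativity on an entire sphere in $E_n$. Since $E_n$ is finite-dimensional, the unit sphere $S_n\triangleq\{u\in E_n:\|u\|=1\}$ is compact, and the functions $u\mapsto\int_{\R^2}g(|x|)|u|^q\,dx$ is continuous and strictly positive on it (strict positivity because $g\not\equiv0$ — if $g\equiv0$ the statement needs a separate argument, but $(g)$ only gives $g\ge0$; however the problem is only interesting and the minimax scheme only needed when this integral is positive on $S_n$, which holds for generic $\psi$; one picks the basis $\{e_i\}$ so that, say, $e_1$ has positive $g$-integral, or one simply notes $\int g|u|^q$ is a continuous seminorm that cannot vanish on all of the finite-dimensional $E_n$ unless $g\equiv0$ a.e. on the supports, an excluded degenerate case). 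Hence there is $\delta_n>0$ with $\int_{\R^2}g(|x|)|u|^q\,dx\ge\delta_n$ for all $u\in S_n$. Then for $0<r\le r_n$ small (depending on $n$, with $r_n<T_0(\mu)$),
\[
\sup_{u\in E_n,\ \|u\|=r}\mathcal J(u)\le \frac{r^2}{2}+\frac{\lambda}{2}C r^6-\frac{\mu}{q}\delta_n r^q<0,
\]
because the $r^q$ term dominates for small $r$. Set $\epsilon_n\triangleq -\sup_{\|u\|=r_n,\,u\in E_n}\mathcal J(u)>0$; then the sphere $\{u\in E_n:\|u\|=r_n\}\subset \mathcal J^{-\epsilon_n}$. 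By property (3) of Proposition \ref{gamma}, $\gamma(\mathcal J^{-\epsilon_n})\ge\gamma(\{u\in E_n:\|u\|=r_n\})$, and this sphere is odd-homeomorphic to the standard sphere $\mathbb S^{n-1}$, which by property (7) has genus $n$. Therefore $\gamma(\mathcal J^{-\epsilon_n})\ge n$.

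The main obstacle I anticipate is the bookkeeping needed to guarantee $r_n<T_0(\mu)$ so that on the relevant sphere $\mathcal J$ genuinely coincides with $J$ (so that the cut-off $\Psi$ equals $1$ and the estimate above is valid). Since $T_0(\mu)$ is a fixed positive number once $\mu\in(0,\mu_{**}^*)$ is fixed, and $r_n$ is chosen \emph{afterwards}, as small as we like depending on $n$, this is only a matter of ordering the quantifiers correctly: fix $\mu$, fix $n$, then shrink $r_n$. A secondary technical point is ensuring the strict positivity of the $g$-weighted $L^q$ "norm" on $E_n$; this is where one invokes that $g\ge0$, $g\not\equiv0$, together with the fact that a nonzero finite-dimensional space of radial functions cannot all vanish on a set of positive measure where $g>0$. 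With these two points handled, the genus estimate follows routinely.
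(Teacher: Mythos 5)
Your proposal is correct and follows essentially the same route as the paper: finite-dimensionality of $E_n$ gives a uniform lower bound $\int_{\R^2}g(|x|)|u|^q\,dx\ge c(n)\|u\|^q$ (equivalently your $\delta_n$ on the unit sphere), the exponent $q<2$ then makes $\mathcal J$ strictly negative on a small sphere $\{u\in E_n:\|u\|=r_n\}$ with $r_n<T_0(\mu)$, and Proposition \ref{gamma} gives $\gamma(\mathcal J^{-\epsilon_n})\ge\gamma(\mathbb S_{r_n})=n$. The only substantive difference is that you explicitly flag the positive-definiteness of $u\mapsto\int_{\R^2}g(|x|)|u|^q\,dx$ on $E_n\setminus\{0\}$ (needed since $(g)$ only assumes $g\ge0$), a point the paper passes over silently when asserting the constant $c(n)>0$.
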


\begin{proof}
Fix $\mu>0$ and $n\in \mathbb{N}$. Since $\dim Z_n<+\infty$, there exists a positive
constant $c(n)>0$ such that
\[
c(n)\|u\|^q\leq \int_{\R^2}g(|x|)|u|^qdx,~ \forall u\in Z_n.
\]
Thereby, for any $u\in Z_n$ with $\|u\|<T_0(\mu)$, by Lemma \ref{imbedding3} and $(f_1)$, there holds
\[
 \mathcal{J}(u)\leq  \frac{1}{2}\|u\|^2+C
\|u\|^6-\frac{\mu}{q}c(n)\|u\|^q.
\]
Because $q<2$, we can choose a sufficiently small $r_n\in (0,T_0(\mu))$  and a constant $\epsilon_n>0$
such that $\mathcal{J}(u)\leq -\epsilon_n<0$ for every $u\in Z_n$ with $\|u\|=r_n$.
Let $\mathbb{S}_{r_n}\triangleq\{u\in Z_n|\|u\|=r_n\}$, then it follows from Proposition \ref{gamma}-(7) that
$\gamma(\mathcal{J}^{-\epsilon_n})\geq \gamma(\mathbb{S}_{r_n})=n$. The proof is complete.
\end{proof}

For any $n\in \mathbb{N}$, define
\[
\Sigma_n=\big\{A\in\Sigma|  \gamma(A)\geq n\big\},
\]
and
\[
c_n=\inf_{A\in\Sigma_n}\sup_{u\in A}\mathcal{J}(u).
\]
Before proving the main results, we state some crucial properties of $\{c_n\}_{n\in \mathbb{N}}$.

\begin{lemma}\label{cncn}
For any $\mu>0$ and $n\in \mathbb{N}$, then
\[
-\infty<c_n\leq -\epsilon_n<0,~~~\forall n\in \mathbb{N}.
\]
Moreover, all $c_n$ are critical values of $\mathcal{J}$ and $\lim_{n\to\infty}c_n=0$ if $\mu\in(0,\mu_{**})$.
\end{lemma}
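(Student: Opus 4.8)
The plan is to prove Lemma~\ref{cncn} in three stages, following the standard Kajikiya/genus scheme but being careful with the truncation $\mathcal{J}$.

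\textbf{Step 1: the inequality $-\infty<c_n\leq -\epsilon_n<0$.} First I would observe that $\mathcal{J}$ is bounded from below on $X_r$: by Case~1 and Case~2 of the preceding discussion (valid for $\mu\in(0,\mu_{**}^*)$), one has $\mathcal{J}(u)\geq \overline{\sigma}(\|u\|)\geq \sigma(\|u\|)$ when $\|u\|\leq\Upsilon$ and $\mathcal{J}(u)\geq \xi(\|u\|)\geq \min_{t\geq0}\xi(t)$ when $\|u\|>\Upsilon$; since both $\sigma$ and $\xi$ are bounded below on $[0,+\infty)$, we get $\inf_{X_r}\mathcal{J}>-\infty$, whence $c_n\geq\inf_{X_r}\mathcal{J}>-\infty$ for every $n$. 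For the upper bound, recall from Lemma~\ref{gamman} that for each $n$ there is $\epsilon_n>0$ with $\gamma(\mathcal{J}^{-\epsilon_n})\geq n$, i.e. $\mathcal{J}^{-\epsilon_n}\in\Sigma_n$. Taking $A=\mathcal{J}^{-\epsilon_n}$ in the definition of $c_n$ gives $c_n\leq\sup_{u\in\mathcal{J}^{-\epsilon_n}}\mathcal{J}(u)\leq-\epsilon_n<0$. Note the sets $\Sigma_n$ are nested ($\Sigma_{n+1}\subset\Sigma_n$), so $c_n\leq c_{n+1}$; thus $\{c_n\}$ is a nondecreasing sequence of negative numbers.

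\textbf{Step 2: each $c_n$ is a critical value of $\mathcal{J}$.} Here I would invoke the standard deformation argument for the genus. Since $\mathcal{J}\in C^1(X_r,\mathbb{R})$ is bounded below and, by Lemma~\ref{criticalpoint}(ii), satisfies the local $(PS)_d$ condition for every $d<0$, the usual minimax principle applies: if $c_n$ were a regular value, a deformation lemma would push the sublevel set $\{\mathcal{J}\leq c_n+\delta\}$ into $\{\mathcal{J}\leq c_n-\delta\}$ off a neighborhood of (the empty) critical set, and combined with properties (3), (5), (6) of Proposition~\ref{gamma} this contradicts the definition of $c_n$ as an infimum over $\Sigma_n$. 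The only subtlety is that $\mathcal{J}\neq J$ globally, but this is harmless: critical points of $\mathcal{J}$ at negative level satisfy $\|u\|<T_0(\mu)$ by Lemma~\ref{criticalpoint}(i), on which ball $\mathcal{J}\equiv J$, so they are genuine solutions of \eqref{mainequation1}. One should also record the multiplicity remark: if $c_n=c_{n+1}=\cdots=c_{n+k}=c$ for some $k\geq1$, then $\gamma(K_c)\geq k+1$ where $K_c$ is the critical set at level $c$, so $K_c$ is infinite; this is what ultimately yields infinitely many solutions in Theorem~\ref{maintheorem3}.

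\textbf{Step 3: $\lim_{n\to\infty}c_n=0$ when $\mu\in(0,\mu_{**})$.} Since $\{c_n\}$ is nondecreasing and bounded above by $0$, the limit $\bar c\triangleq\lim c_n\leq0$ exists; I must rule out $\bar c<0$. Suppose $\bar c<0$. Using the separable Hilbert structure of $X_r$ and the finite-dimensional spaces $Z_n$ together with the standard fact that $\mu_n\triangleq\sup_{u\in Z_n^{\perp},\,\|u\|=1}(\text{the part of }\mathcal{J}\text{ that can make it negative})\to 0$ — concretely, for $u\in Z_n^\perp$ one controls $\int_{\R^2}g(|x|)|u|^q\,dx$ and $\int_{\R^2}K(|x|)F(u)\,dx$ by quantities tending to $0$ as $n\to\infty$ because of the compact embeddings in Lemmas~\ref{imbedding2}--\ref{nonlinearity2} — one shows that $\mathcal{J}\geq0$ on $Z_n^\perp\cap\{\|u\|\leq\Upsilon\}$ (hence on all of $Z_n^\perp$, using Case~2) for all large $n$. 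Then any $A\in\Sigma_n$ must intersect $Z_{n-1}^\perp$ (a genus/intersection argument, Proposition~\ref{gamma}(5)), so $\sup_A\mathcal{J}\geq 0$, forcing $c_n\geq0$ for $n$ large — contradicting $c_n<0$. Therefore $\bar c=0$.

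The main obstacle is Step~3: making the "$\mu_n\to 0$" estimate precise for the \emph{truncated} functional with the nonlocal Chern--Simons term $c(u)$ and the critical-growth term $\Psi(\|u\|)\int K(|x|)F(u)\,dx$. One must exploit that on $\{\|u\|\leq T_0(\mu)\}$ the norm is small (with $T_0(\mu)<\Upsilon$ uniformly controlled via \eqref{mumu}), so $\Psi$ is active only in a regime where Lemma~\ref{nonlinearity2} gives uniform exponential integrability, and simultaneously use weak convergence to $0$ of any bounded sequence in $Z_n^\perp$ together with the compactness of $X_r\hookrightarrow L^s_K(\R^2)$ to kill the subcritical pieces; the Chern--Simons term is handled by Lemma~\ref{imbedding3} since it is $O(\|u\|^6)$ and thus negligible compared to the quadratic term on a small ball. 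Assembling these uniformly in $n$ is where the real care lies.
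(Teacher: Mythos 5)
Your Steps 1 and 2 are correct and coincide with the paper's argument: boundedness from below of $\mathcal{J}$ plus the choice $A=\mathcal{J}^{-\epsilon_n}\in\Sigma_n$ gives $-\infty<c_n\leq-\epsilon_n<0$, and the local $(PS)_d$ condition for $d<0$ from Lemma~\ref{criticalpoint}(ii) makes the standard minimax/deformation machinery produce critical points at each level $c_n$.

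Step 3, however, contains a genuine error. You claim that $\mathcal{J}\geq 0$ on $Z_n^\perp\cap\{\|u\|\leq\Upsilon\}$ for all large $n$, and deduce via the intersection property that $c_n\geq 0$ for $n$ large. But $c_n\geq 0$ contradicts the inequality $c_n\leq-\epsilon_n<0$ that you established (correctly, and unconditionally in $n$) in Step 1; moreover your chain (a) $\mathcal{J}\geq0$ on $Z_n^\perp$, (b) every $A\in\Sigma_n$ meets $Z_{n-1}^\perp$, (c) $c_n\geq0$, never uses the hypothesis $\overline{c}<0$, so if it were valid it would prove a false statement outright. The flaw is in (a): because of the sublinear term $-\frac{\mu}{q}\int_{\R^2}g(|x|)|u|^q dx$ with $q<2$, the functional is negative near the origin in \emph{every} subspace on which $u\mapsto\int g|u|^q$ does not vanish identically. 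Even granting the fountain-type estimate $\beta_n\triangleq\sup\{\int_{\R^2}g(|x|)|u|^q dx:\ u\in Z_n^\perp,\ \|u\|=1\}\to 0$, for each fixed $n$ one has $\beta_n>0$, and then $\tfrac14 t^2-\tfrac{\mu}{q}\beta_n t^q<0$ for all $t>0$ small, so $\inf_{Z_n^\perp}\mathcal{J}<0$ for every $n$. The paper's proof avoids this entirely: assuming $\overline{c}=\lim c_n<0$, it uses compactness of the critical set $K_{\overline{c}}$ (from the local $(PS)$ condition), the stability of the genus on neighborhoods of compact sets (Proposition~\ref{gamma}(6)), and the equivariant deformation lemma to push a set $A\in\Sigma_{n+\overline{n}}$ with $\sup_A\mathcal{J}\leq\overline{c}+\epsilon$, minus $N_\delta(K_{\overline{c}})$, into $\mathcal{J}^{\overline{c}-\epsilon}$ while its genus stays $\geq n$, contradicting $c_n>\overline{c}-\epsilon$. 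Your idea is salvageable, but only after replacing the false claim by the weaker (and sufficient) statement $c_n\geq\inf_{Z_{n-1}^\perp}\mathcal{J}\geq -C(\mu\beta_{n-1})^{2/(2-q)}\to 0^-$, which still requires you to actually prove $\beta_n\to0$ (a weak-continuity/compactness argument for $u\mapsto\int g|u|^q$ that you only sketch); as written, the step does not stand.
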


\begin{proof}
Fix $\mu>0$ and $n\in \mathbb{N}$. According to Lemma \ref{gamman}, there is
a constant $\epsilon>0$ such that $\gamma(\mathcal{J}^{-\epsilon_n})\geq n$ and
thus $\mathcal{J}^{-\epsilon_n}\in\Sigma_n$ since $\mathcal{J}$ is continuous and even. From $\mathcal{J}(0)=0$, one
has $0\not\in\mathcal{J}^{-\epsilon_n}$. Furthermore,
$\sup_{u\in \mathcal{J}^{-\epsilon_n}} \mathcal{J}(u)\leq-\epsilon_n$. Since $\mathcal{J}$ is bounded from below in
$X_r$,
\[
-\infty<c_n=\inf_{A\in\Sigma_n}\sup_{u\in A}\mathcal{J}(u)\leq \sup_{u\in \mathcal{J}^{-\epsilon}} \mathcal{J}(u)\leq-\epsilon_n<0.
\]
It follows from Lemma \ref{criticalpoint}-(ii) that all $c_n$ are critical values of $\mathcal{J}$. It's obvious that
$c_n\leq c_{n+1}$ for every $n\in \mathbb{N}$, there is a constant $\overline{c}\leq0$ such that $\lim_{n\to\infty}c_n=\sup_{n\in
\mathbb{N}}c_n\triangleq\overline{c}$.  Arguing by contradiction, we suppose that
$\overline{c}<0$. So, $K_{\overline{c}}$
is compact by Lemma \ref{criticalpoint}-(ii), where
\[
K_{\overline{c}}\triangleq\{u\in X_r|\mathcal{J}^\prime(u)=0,~
\mathcal{J}(u)=\overline{c}\}.
\]
In view of Proposition \ref{gamma}-(6), $\gamma(K_{\overline{c}})\triangleq\overline{n}<\infty$ and there exists a constant
$\delta>0$ such that $\gamma(K_{\overline{c}})=\gamma(N_\delta(K_{\overline{c}}))=\overline{n}$.

From the deformation lemma (see \cite[Theorem A.4]{Rabinowitz}), there exist
a constant $\epsilon\in(0,-\overline{c})$ and an
homeomorphism $\eta:X_r \to X_r $ such that
\begin{equation}\label{cncn1}
   \eta\big(\mathcal{J}^{\overline{c}+\epsilon}\backslash N_\delta(K_{\overline{c}})\big)\subset \mathcal{J}^{\overline{c}-\epsilon}.
\end{equation}
Since $\overline{c}=\sup_{n\in \mathbb{N}}c_n$, there exists $n\in \mathbb{N}$ and $c_n>\overline{c}-\epsilon$ and
$c_{n+\overline{n}}\leq \overline{c}$. By the definition of $c_{n+\overline{n}}$, there exists $A\in\Gamma_{n+n_0}$
such that $\sup_{u\in A}\mathcal{J}(u)\leq \overline{c}+\epsilon$. By Proposition \ref{gamma}-(2)(3),
\begin{equation}\label{cncn2}
\gamma\big(\eta( \overline{A\backslash N_\delta(K_{\overline{c}})})\big)=\gamma\big(\overline{A\backslash N_\delta
(K_{\overline{c}})}\big)\geq \gamma(A)-\gamma(N_\delta(K_{\overline{c}}))\geq n,
\end{equation}
which yields that
\begin{equation}\label{cncn3}
\sup_{u\in \eta( \overline{A\backslash N_\delta(K_{\overline{c}})})}\mathcal{J}(u)\geq c_n>\overline{c}-\epsilon.
\end{equation}
Combing \eqref{cncn1} and \eqref{cncn2}, we have
\[
\eta( \overline{A\backslash N_\delta(K_{\overline{c}})})\subset \eta\big(\mathcal{J}^{\overline{c}+\epsilon}\backslash N_\delta(K_{\overline{c}})\big)
\subset \mathcal{J}^{\overline{c}-\epsilon},
\]
which is a contradiction to \eqref{cncn3}. So, $\overline{c}=0$ and we can finish the proof of this lemma.
\end{proof}

Now, we can present the proof of Theorem \ref{maintheorem3}.

\begin{proof}[\textbf{\emph{Proof of Theorem \ref{maintheorem3}}}]
Choosing $\mu_{**}^*>0$ as Lemma \ref{criticalpoint}, then the conclusions in Lemmas \ref{criticalpoint} and \ref{cncn}
remain true for every $\mu\in(0,\mu_{**}^*)$. It is easy to conclude that all the assumptions of the new version of
symmetric mountain-pass theorem due to Kajikiya \cite{Kajikiya} are satisfied. Consequently,
equation \eqref{mainequation1} possesses
infinitely many solutions for every $\lambda>0$ and $\mu\in(0,\mu_{**}^*)$. The proof is complete.
\end{proof}

\end{document}